\documentclass[10pt]{amsart}

\setlength\textheight{7.7in}
\setlength\textwidth{6.5in}
\setlength\oddsidemargin{0in}            
\setlength\evensidemargin{0in}
\setlength\parindent{0.25in}
\setlength\marginparwidth{0.8in}

\usepackage{amsfonts, amsmath, amsthm, amssymb, epsfig, bm, graphics,
  psfrag, latexsym, chngcntr, color, yfonts, mathtools, subfig, longtable,enumitem}
\usepackage[color=blue!20!white,textsize=tiny]{todonotes}
%\captionsetup[subfloat]{font=normalsize,labelformat=empty,width={0.45\textwidth}}
\usepackage[all,cmtip]{xy}
\usepackage{tikz} \usetikzlibrary{matrix,arrows}
\definecolor{darkblue}{rgb}{0,0,0.4}
\usepackage[colorlinks=true, citecolor=darkblue, filecolor=darkblue, linkcolor=darkblue, urlcolor=darkblue]{hyperref}
\usepackage[all]{hypcap}
\usepackage{tikz,ifthen}
\usetikzlibrary{calc,positioning,intersections}

\newtheorem{thm}{Theorem}[section]         
               
\newtheorem{cor}[thm]{Corollary}               
\newtheorem{prop}[thm]{Proposition}  
\newtheorem{ques}[thm]{Question}     
\newtheorem{prob}[thm]{Problem}     
\newtheorem{defn}[thm]{Definition} 
\newtheorem{rem}[thm]{Remark}
 
\newtheorem{alg}[thm]{Algorithm} 
\counterwithin{figure}{section}
\counterwithin{equation}{section}

\newcommand{\R}{\mathbb{R}}
\newcommand{\Z}{\mathbb{Z}}
\newcommand{\F}{\mathbb{F}}

\newcommand{\N}{\mathbb{N}}

\newcommand{\mc}{\mathcal}

\newcommand{\ol}{\overline}

\newcommand{\del}{\partial}
\newcommand{\sbs}{\subset}

\newcommand{\sm}{\setminus}

\newcommand{\Si}{\Sigma}

\newcommand{\al}{\alpha}
\newcommand{\be}{\beta}
\newcommand{\ga}{\gamma}

\newcommand{\KKnots}{K(\mathrm{links},\ast)}
\newcommand{\mrKKnots}{K(\mathrm{links},\ast)}

\newcommand{\nbd}{\mathrm{nbd}}

\newcommand{\CFK}{\mathit{CFK}}

\newcommand{\hatHFK}{\widehat{\mathit{HFK}}}
\newcommand{\hatCFK}{\widehat{\mathit{CFK}}}
\newcommand{\tCF}{\widetilde{\mathit{CF}}}
\newcommand{\tSF}{\widetilde{\mathit{SF}}}
\newcommand{\tHF}{\widetilde{\mathit{HF}}}

\newcommand{\taubot}{\tau_{\mathrm{bot}}}
\newcommand{\tautop}{\tau_{\mathrm{top}}}

\begin{document}
\title{Murasugi sum and  extremal knot Floer homology} \author{Zhechi Cheng}
\address{School of Mathematics and Statistics, Wuhan University, Wuhan, Hubei, China}
\email{\href{mailto:chengzhechi@gmail.com}{chengzhechi@gmail.com}}
\author{Matthew Hedden} \address{Department of Mathematics, Michigan
  State University, East Lansing, MI}
\email{\href{mailto:matthew.hedden@gmail.com}{matthew.hedden@gmail.com}}
\author{Sucharit Sarkar} \address{Department of Mathematics, University of California, Los Angeles, CA}
\email{\href{mailto:sucharit.sarkar@gmail.com}{sucharit.sarkar@gmail.com}}

\subjclass[2020]{57K18, 57R58}
\keywords{}

\date{}

\begin{abstract} The aim of this paper is to study the behavior of
  knot Floer homology under Murasugi sum.  We establish a graded
  version of Ni's isomorphism between the extremal knot Floer
  homology of Murasugi sum of two links and the tensor product of the
  extremal knot Floer homology groups of the two summands. We
  further prove that $\tau=g$ for each summand if and only if $\tau=g$
  holds for the Murasugi sum (with $\tau$ and $g$ defined
  appropriately for multi-component links).  Some applications are
  presented.
\end{abstract}

\maketitle

\section{Introduction}
The {\em Murasugi sum} is an operation that one can perform on isotopy
classes of surfaces with non-empty boundary embedded in 3-manifolds.
Applying it to Seifert surfaces yields an operation on isotopy classes
of links.  As the name suggests, the operation was introduced by
Murasugi \cite{Murasugi63,MurasugiAlternating}, whose motivation was a
calculation of the genus of an alternating link by means of the degree
of its Alexander polynomial.  An important point to be made about the
Murasugi sum is that it {\em not} a well-defined binary operation on
the set of links.  Indeed, many essential choices are made in its
definition; not only the isotopy classes of the chosen Seifert
surfaces, but also the polygons embedded therein along which the sum
is performed.  Despite these choices, one can easily show that the
coefficient of the Alexander polynomial corresponding to the first
betti number of the surfaces in question is multiplicative under
Murasugi sum.  Gabai later showed that Seifert genus behaves
additively under Murasugi sum, extending the well-known special case
of the additivity of genus under connected sum.  Indeed, he showed that
the Murasugi sum of two surfaces is minimal genus if and only the two
summands are minimal genus \cite{Gabai,GabaiII}.
Note, however, that pathology arises if one considers non-minimal
genus surfaces; for instance, Thompson showed that one can sum two unknots along genus one
surfaces to get a trefoil or sum two figure eights to get the unknot
\cite{Thompson}, and Able and Hirasawa have recently shown that in fact {\em any} knot can be obtained as a Murasugi sum of {\em any other two} knots along (typically) non-minimal genus Seifert surfaces \cite{AbleHirasawa}.

In light of the connections between the knot Floer homology groups and
both the Alexander polynomial (through their Euler characteristic
\cite{POZSzknotinvariants,JR}) and the genus (through their breadth
\cite{POZSzgenusbounds}), one might wonder about the behavior
of the extremal knot Floer homology under Murasugi sum.  
Here, ``extremal''
refers to the knot Floer homology group in Alexander grading given by
negative the genus of the surfaces used. (Modulo a
well-understood grading shift, this is isomorphic to the knot Floer
homology group in Alexander grading given by the genus, a group often referred to as the ``top" group)
For
instance, the knot Floer homology groups of a connected sum are a
bigraded tensor product of those of the summands \cite[Theorem
7.1]{POZSzknotinvariants}.  Simple examples exploiting the
non-uniqueness of Murasugi sum show that this cannot hold for general
Murasugi sums and, in fact, there can be no closed formula for the
knot Floer homology of the Murasugi sum of links in terms of the knot
Floer homology of the summands.  Despite this, it would be reasonable
to conjecture that the extremal knot Floer homology of a
Murasugi sum is a tensor product of the extremal terms of its summands. 
Ni proved that this is indeed the case for {\em ungraded} knot Floer
homology groups with field coefficients~\cite[Theorems 1.1,4.5]{YN06}
(cf. ~\cite[Corollary 8.8]{AJ}).  Since an ungraded vector space
over a field is determined up to isomorphism by its dimension, this
result is equivalent to saying that the rank of the 
extremal knot
Floer homology is multiplicative under Murasugi sum. It is
natural to wonder if Ni's result extends in a (Maslov) graded fashion,
and our first result confirms that this is indeed the case.  To state
it, we define the index of a (possibly disconnected) surface $R$ to be the quantity
$\frak{i}(R):=\frac {|\partial R|-\chi(R)}{2}$
\begin{thm}\label{thm1}
  For $i\in\{1,2\}$, let $L_i$ be an $l_i$-component link and let
  $R_i$ be a  Seifert surface for $L_i$. Let $R_1*R_2$ be a
  Murasugi sum of $R_1$ and $R_2$, and let $\del(R_1*R_2)=L=L_1*L_2$
  be the corresponding $l$-component link.  Then with
  $\F_2$-coefficients, we have a graded isomorphism
  \[
  \hatHFK(L,-\frak{i}(R_1*R_2))[l-1]\cong\hatHFK(L_1,-\frak{i}(R_1))[l_1-1]\otimes\hatHFK(L_2,-\frak{i}(R_2))[l_2-1].
  \]
\end{thm}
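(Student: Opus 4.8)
The plan is to recast both sides in terms of Juh\'asz's sutured Floer homology, to realize the Murasugi sum as a gluing of sutured manifolds along a disk, and then to prove the resulting K\"unneth formula at the chain level while keeping track of the absolute Maslov grading. First, I would pass to sutured Floer homology. For a link $L$ with Seifert surface $R$, let $(M(R),\ga(R))$ be the sutured manifold obtained by cutting $S^3$ open along $R$ --- equivalently, by decomposing the link exterior $S^3\sm\nu(L)$, equipped with two meridional sutures on each boundary torus, along $R$. Combining Juh\'asz's surface decomposition theorem with the computation of the outer $\SpinC$ structures of a Seifert surface (see also \cite{YN06}), one has a graded isomorphism
\[
\mathit{SFH}(M(R),\ga(R))\cong\hatHFK(L,-\frak i(R))[l-1],
\]
where the shift $[l-1]$ is the standard normalization relating $\hatHFK$ to the invariant computed by $\mathit{SFH}$. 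Since a Murasugi sum satisfies $\chi(R_1*R_2)=\chi(R_1)+\chi(R_2)-1$, we get both $\frak i(R_1*R_2)=\frak i(R_1)+\frak i(R_2)$ and $l=l_1+l_2-1$, so the three grading shifts appearing in the statement are precisely compatible with a \emph{shift-free} tensor product. Hence Theorem~\ref{thm1} is equivalent to the graded isomorphism $\mathit{SFH}(M(R_1*R_2))\cong\mathit{SFH}(M(R_1))\otimes\mathit{SFH}(M(R_2))$.

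I would then read off the relevant topology from Gabai's geometric description of the Murasugi sum \cite{Gabai,GabaiII}. Writing $S^3=B_1\cup_{S^2}B_2$ with $R_i\sbs B_i$ and with $R_1*R_2$ meeting the splitting sphere in the defining $2n$-gon, the sphere $S^2$ becomes, after cutting $S^3$ open along $R_1*R_2$, a properly embedded \emph{separating disk} $P\sbs M(R_1*R_2)$ whose boundary meets the suture in exactly $2n$ points; cutting $M(R_1*R_2)$ along $P$ gives back $M(R_1)\sqcup M(R_2)$, up to capping off a boundary disk with a $3$-ball on each side, an operation that does not change $\mathit{SFH}$. When $n=1$ the disk $P$ is a product disk and one is merely reproving the \Ozsvath--\Szabo\ K\"unneth formula for connected sums; the content lies in the case $n>1$, where $P$ is neither a product disk nor, in general, well-groomed, so Juh\'asz's decomposition formula cannot be invoked as a black box and the splitting must be produced by hand.

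To produce it, I would build a sutured Heegaard diagram $\mc H=\mc H_1\cup\mc H_2$ for $M(R_1*R_2)$ by gluing admissible diagrams $\mc H_i$ for the summands $M(R_i)$ along a neighbourhood of $P$, arranged so that $P$ is carried by a system of properly embedded arcs in the Heegaard surface that is disjoint from every $\al$- and $\be$-curve. Such an arc system separates $\mc H$ into $\mc H_1$ and $\mc H_2$, forces $\mb T_\al\cap\mb T_\be=(\mb T_{\al_1}\cap\mb T_{\be_1})\times(\mb T_{\al_2}\cap\mb T_{\be_2})$, and prevents any index-$1$ holomorphic disk from having a domain that meets both sides; hence, over $\F_2$, $\mathit{SFC}(\mc H)\cong\mathit{SFC}(\mc H_1)\otimes\mathit{SFC}(\mc H_2)$ as chain complexes. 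Taking homology gives the ungraded and relatively graded isomorphism --- this is, in essence, Ni's argument \cite{YN06} transported to the sutured setting.

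The step I expect to be the real obstacle is the \emph{absolute} Maslov grading. One must show that under the induced bijection $\mathbf x\leftrightarrow(\mathbf x_1,\mathbf x_2)$ the gradings obey $M(\mathbf x)=M(\mathbf x_1)+M(\mathbf x_2)+c$ for a single constant $c$ that depends only on the two diagrams, and then that $c=0$. Constancy of $c$ is automatic once the disk domains split, since both the Maslov index and the remaining terms of the grading formula are additive over the two pieces; the entire difficulty is pinning down the constant. I would do this either by chasing the normalization through the sutured grading formula on the combined diagram, or --- more economically --- by testing the asserted isomorphism on one explicit Murasugi sum whose two sides are independently computable (for instance a Hopf-band plumbing onto a fibre surface), which forces $c=0$ and, together with the reduction of the first paragraph, completes the proof.
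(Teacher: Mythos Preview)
Your reduction to sutured Floer homology rests on the two identities $l=l_1+l_2-1$ and $\frak{i}(R_1*R_2)=\frak{i}(R_1)+\frak{i}(R_2)$, and both are false for $n>1$. Plumbing two Hopf bands gives the fiber surface of a trefoil: here $l_1=l_2=2$ but $l=1$, and $\frak{i}(R_1)=\frak{i}(R_2)=1$ while $\frak{i}(R_1*R_2)=1$. The Euler characteristic is additive up to the $-1$ you wrote, but the number of boundary components is not; in the paper's notation one has $(l+\delta-n)=(l_1+\delta_1-n)+(l_2+\delta_2-n)$, with the $\delta$'s recording how many components of each link actually meet the $2n$-gon. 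So your claim that the three shifts in the theorem collapse to a shift-free K\"unneth formula on $\mathit{SFH}$ is unjustified, and the bookkeeping you need is exactly the one the paper carries out.

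The more serious gap is the absolute Maslov grading, which you correctly flag as the crux but do not actually handle. Your constant $c$ is, at best, a function of the specific pair of diagrams and the way they are glued; computing it for a single Hopf-band plumbing tells you nothing about other Murasugi sums unless you first prove $c$ is universal, and you have not. ``Chasing the normalization through the sutured grading formula'' is not a proof either---the authors remark explicitly that they tried to push the Ni/Juh\'asz argument through to the graded statement and could not. What the paper does instead is build very specific multi-pointed Heegaard diagrams $\mc{H}_1,\mc{H}_2,\mc{H}_1*\mc{H}_2$ adapted to the Murasugi-sum picture, pass through a controlled sequence of modifications $\mc{H}\to\mc{H}'\to\mc{H}''\to\mc{H}'''$ using the local-isotopy results of Section~\ref{sec:3} (which guarantee the relevant subcomplex survives each step), and then compute the absolute grading shift directly: handleslides realized by explicit Maslov-index-zero triangles show corresponding generators of $\mc{H}''$ and $\mc{H}'''$ have equal grading, and a final destabilization reduces the question to a tensor identity of $\tCF$-complexes for $S^3$ whose homology is known by Theorem~\ref{thm:main-invariance}. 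The shift comes out to be exactly $n-1$, matching Equation~\eqref{eq:graded-isom}. None of this is visible from the sutured black box.
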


We should note that it is not clear how to extend Ni's argument, nor
the argument using the decomposition theorem for sutured Floer
homology presented by Juhasz, to yield the graded statement given
above (despite some effort to do so).  On a superficial level, though,
our proof follows the same strategy as its antecedents; namely, we
find particular Heegaard diagrams adapted to Seifert surfaces and
their Murasugi sum, and then analyze the resulting Floer complexes in
detail.  The diagrams we end up using are more specialized, however,
and yield more control over the combinatorics and homotopy theoretic
aspects of the associated chain complexes.  This increase in control
further allows us to glean some information about the rest of the
knot Floer homology filtration, in the form of the following result
about the integer-valued concordance invariant
$\tau$~\cite{POZSz4ballgenus} (for the extension of $\tau$ to links,
see \cite{Cavallo,GridBook,HeddenRaoux}).

\begin{thm}\label{thm2}
  If the link $L$ is the Murasugi sum of links $L_1$ and $L_2$ along
  minimal index Seifert surfaces, then $\tau(L_i)=g(L_i)$ for all
  $i\in\{1,2\}$ if and only if $\tau(L)=g(L)$. (Here $\tau$ denotes
  $\tau_{\mathrm{top}}$ from \cite{HeddenRaoux} when discussing links
  with more than one component.)
\end{thm}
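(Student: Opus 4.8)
The plan is to leverage Theorem~\ref{thm1} together with a careful analysis of the filtered chain homotopy type of the Murasugi-sum Heegaard diagrams constructed for its proof. Recall that $\tau_{\mathrm{top}}$ detects the highest Alexander grading that survives in the filtered homology of $\hatCFK$: in the present context $\tau(L) = g(L)$ (equivalently $\tau_{\mathrm{top}}(L) = -\frak{i}(R)$ for a minimal index Seifert surface $R$, up to the conventions relating $g$ and $\frak{i}$) precisely when the composition $\hatHFK(L, -\frak{i}(R)) \to \tHF(S^3)$ induced by inclusion of the extremal filtration level is nonzero. So the first step is to reduce the statement to a question about this map: $\tau(L)=g(L)$ if and only if the extremal subcomplex $F_{-\frak{i}(R)}\hatCFK(L)$ carries a cycle generating $\tHF(S^3) = \F_2$, i.e.\ the inclusion-induced map on homology is surjective (equivalently nonzero) in the appropriate homological degree.

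Next I would set up the two summand complexes in the adapted diagrams from the proof of Theorem~\ref{thm1} and identify, for each summand $L_i$, a specific generator $x_i$ at Alexander level $-\frak{i}(R_i)$ whose image in $\tHF(S^3)$ records whether $\tau(L_i) = g(L_i)$. The key point — and this is where the ``increase in control'' advertised in the introduction is essential — is that the Murasugi-sum diagram realizes the extremal piece of $\hatCFK(L)$ as (a subcomplex filtered-chain-homotopy-equivalent to) a tensor product in a way compatible with the maps to $\tHF(S^3)$. Concretely, I expect that the distinguished generator $x_1 * x_2$ at level $-\frak{i}(R_1 * R_2)$ maps to the product of the images of $x_1$ and $x_2$ under $\hatHFK(L_i,-\frak{i}(R_i)) \to \tHF(S^3) = \F_2$, and that no differential from a higher filtration level can hit $x_1 * x_2$ unless a corresponding differential hits $x_1$ or $x_2$ in a summand. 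Establishing this requires tracking holomorphic disks in the glued diagram and ruling out ``crossover'' disks that would couple the two sides in an uncontrolled way; the local model near the Murasugi polygon, already analyzed for Theorem~\ref{thm1}, should confine the relevant disks to one side or the other.

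Granting the tensor-compatibility of the filtration maps, both directions follow algebraically. If $\tau(L_i) = g(L_i)$ for $i = 1,2$, then each $x_i$ survives to generate $\tHF(S^3)$, hence $x_1 * x_2$ is a cycle in the extremal filtration level of $\hatCFK(L)$ that is not a boundary from above, so $\tau(L) = g(L)$. Conversely, if $\tau(L) = g(L)$, then $H_*(F_{-\frak{i}(R_1*R_2)}\hatCFK(L)) \to \tHF(S^3)$ is onto; using the tensor decomposition this forces each factor's extremal-level inclusion map to be onto (over $\F_2$ a tensor product of maps of vector spaces is nonzero iff each factor is nonzero), so $\tau(L_i) = g(L_i)$. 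One should also take care that $\tau_{\mathrm{top}}$ for links is being used consistently throughout and that the grading shifts $[l_i - 1]$ appearing in Theorem~\ref{thm1} do not affect the ``topmost surviving Alexander grading'' characterization — they are overall shifts and cancel on both sides.

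The main obstacle I anticipate is precisely the compatibility claim in the second paragraph: Theorem~\ref{thm1} gives an isomorphism of the \emph{associated graded} extremal groups, but $\tau$ is a property of the \emph{filtered} complex, so I need the Murasugi-sum diagram to control not just the extremal homology but the differentials into the extremal filtration level from all higher levels, and to do so in a product-compatible fashion. This is more than Theorem~\ref{thm1} asserts, and it is why the theorem is stated only for minimal index (so that $-\frak{i}(R)$ really is the bottom Alexander grading with nonzero homology and $\tau_{\mathrm{top}}$ has the clean characterization above, and so that Gabai's result guarantees $R_1 * R_2$ is genus-minimizing). Making the disk count rigorous — in particular a clean statement that holomorphic disks in the summed diagram with boundary crossing both sides either do not exist or factor as products — is the technical heart of the argument; the adapted diagrams are chosen precisely to make this tractable, but spelling it out carefully is the real work.
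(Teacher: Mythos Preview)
Your overall strategy is correct and matches the paper's: reduce $\tau=g$ to nonvanishing of the inclusion-induced map from the extremal subcomplex into $\tCF$, and then show this nonvanishing is multiplicative under the tensor decomposition established in the proof of Theorem~\ref{thm1}. You also correctly identify the main obstacle: Theorem~\ref{thm1} only controls the extremal subcomplex, whereas the map to $H_*(\tCF)$ depends on the entire filtered complex, and the full complexes $\tCF_{\mc{H}_i}$ do \emph{not} tensor together to give $\tCF_{\mc{H}_1*\mc{H}_2}$.

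Where your proposal falls short is in the resolution of this obstacle. You anticipate having to rule out ``crossover'' disks in the full glued diagram, i.e.\ to control all differentials from higher Alexander levels into the extremal one. The paper avoids this entirely by inserting an \emph{intermediate} subcomplex $\tCF^W_{\mc{H}''}$ (generators with only ``white dots'' on the $S^2$ of the Murasugi sum) satisfying
\[
\tSF_{\mc{H}'',S'}\hookrightarrow\tCF^W_{\mc{H}''}\hookrightarrow\tCF_{\mc{H}''}.
\]
The first inclusion \emph{does} respect the tensor decomposition (this is Equation~\eqref{eq:main-isomorphisms} from the proof of Theorem~\ref{thm1}), so multiplicativity of its nonvanishing is immediate. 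The second inclusion is shown to be \emph{injective on homology} by a short rank-counting/spectral-sequence argument: $\tCF_{\mc{H}''}$ is filtered by $\vec{a}\in\{0,1\}^{n-1}$ with associated graded $2^{n-1}$ copies of $\tCF^W_{\mc{H}''}$, and since the total homology has rank exactly $2^{n-1}$ times that of $\tCF^W_{\mc{H}''}$ (both sides compute $\tHF$ of $S^3$ with the appropriate number of basepoints), the spectral sequence collapses and the bottom piece injects. Injectivity of the second map on homology means nonvanishing of $\tSF\to\tCF$ is equivalent to nonvanishing of $\tSF\to\tCF^W$, and the latter is what tensors. No direct analysis of crossover disks in the full complex is needed.

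So the missing idea is this intermediate subcomplex together with the rank argument; without it, the disk-counting program you outline would be substantially harder (and it is not clear it would succeed in the form you describe).
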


A large class of links for which $\tau(L)=g(L)$ is provided so-called
{\em strongly quasipositive} links.  These links possess a Seifert
surface which is properly isotopic into the 4-ball onto a piece of an
algebraic curve and which therefore minimizes the smooth 4-genus.
Rudolph gave a partial extension of Gabai's results to 4-genera, by
showing that the Murasugi sum of links along Seifert surfaces is
strongly quasipositive if and only if the two summands are.  Our
result strengthens the resulting implications for the 4-genus.

Our results lead to topological restrictions on which link types can
be expressed as Murasugi sums of others along minimal index Seifert
surfaces.  Some of the complexity of this problem, and the restrictions
offered by our theorems, can be algebraically distilled by defining a
Grothendieck group of links.  Recall that the Grothendieck
group $K(M)$, of a commutative monoid $M$ is the quotient of the free
abelian group on the set $M$ by the relations $[x+y]=[x]+[y]$, where
on the left ``$+$'' is taken with respect to the monoidal operation and
on the right within the free abelian group.  While the Murasugi sum $\ast$
is not a monoidal operation on links (relying as it does on the choice of Seifert surface and embedded $2n$-gon), we can nonetheless define a group \[\KKnots= \frac{\Z\langle \{\text{Isotopy classes of links}\}\rangle}{[L_1\ast L_2]=[L_1]+[L_2]}\] which we call the {\em
  Grothendieck group of links under Murasugi sum along minimal index
  surfaces}.  It is simply the quotient of the free abelian group on the set of isotopy classes of links by the relations $[L_1\ast L_2]=[L_1]+[L_2]$, where $\ast$ denotes any Murasugi sum along any $2n$-gon in any minimal index Seifert surface for the links in question.   Thus $\KKnots$ consists of equivalence classes of links, where two
links are equivalent if they become isotopic after iteratively Murasugi summing both of them with some collection $(R_1,\ldots, R_i)$ of minimal index Seifert surfaces
(along any $2n$-gons embedded therein, and in any order).  Fibered links, endowed with
their (unique) minimal index Seifert surface, form an important class of links which is closed under Murasugi sums by Gabai's work \cite{Gabai} (see also \cite{Stallings} for the closure under plumbing).  If one considers their
associated Grothendieck subgroup 
$K(\mathrm{fibered\ links},\ast)<\KKnots$, a deep theorem arising from
the Giroux correspondence asserts that
$K(\mathrm{fibered\ links},\ast)\cong \Z\oplus\Z$, generated by the
positive and negative Hopf links \cite{GirouxGoodman}.  One might hope
that all links could similarly be generated by a small family, given
the complexity allowed by choices of Seifert surfaces and embedded
$2n$-gons.

Multiplicativity of the rank of the extremal knot Floer homology
under Murasugi sum shows that $\KKnots$ is infinitely generated.
Indeed, if we consider the rank of the top group as map from the set of links to the natural numbers $\N^\times$, viewed as a multiplicative monoid,  then its multiplicativity under Murasugi sums implies that this map descend to a group homomorphism  $\KKnots \to
K(\N^\times)\cong \mathbb{Q}_{>0}^\times$.  Non-trivial twist knots
have top group of rank equal to the number of twists, showing that the
map to $\N^\times$ is surjective, hence the map to
$\mathbb{Q}_{>0}^\times$ is surjective as well. $\KKnots$ is therefore
infinitely generated as an abelian group.  One could still hope,
however, that some simple infinite family of knots such as twist knots
generates all knots under Murasugi sum and de-summing.  Our result
dashes this hope, and indicates that $\KKnots$ is quite complicated.

\begin{cor}\label{cor3} The Poincar{\'e} polynomial of the top group of knot Floer homology induces a homomorphism
\[ P:\mrKKnots\rightarrow \mathbb{Q}^\times_{>0}(t),\]
where the codomain is the multiplicative group of rational functions in $t$ with positive rational coefficients.
\end{cor}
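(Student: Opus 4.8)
\textbf{Proof proposal for Corollary~\ref{cor3}.}
The plan is to define $P$ on isotopy classes of links (the free generators of $\mrKKnots$), to check that it takes values in the group $\mathbb{Q}^\times_{>0}(t)$, to establish the relation $P(L_1\ast L_2)=P(L_1)P(L_2)$ from Theorem~\ref{thm1}, and to conclude using the universal property of the Grothendieck group.

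For an $l$-component link $L$, let $R_{\min}$ be a minimal index Seifert surface for $L$ and let $P(L)$ be the Poincar\'e polynomial, in the Maslov grading and with $\F_2$-coefficients, of the bigraded vector space $\hatHFK(L,-\frak{i}(R_{\min}))[l-1]$ appearing on the left of Theorem~\ref{thm1} (this is the top group up to the well-understood grading shift recalled before that theorem, and the $[l-1]$ normalization is the one for which Theorem~\ref{thm1} is stated). This is well defined, because the minimal index $\frak{i}(R_{\min})$ is a link invariant and $\hatHFK(L,s)$ is a bigraded link invariant for every $s$. It is a nonzero Laurent polynomial with nonnegative integer coefficients: the coefficients record $\F_2$-dimensions, and the nonvanishing of $\hatHFK(L,-\frak{i}(R_{\min}))$ is precisely the detection of the genus -- more generally the Euler characteristic, or Thurston norm, of $L$ -- by knot Floer homology. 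Any nonzero Laurent polynomial with nonnegative coefficients can be written $t^{a}q(t)$ with $q$ a polynomial with positive rational coefficients, and its inverse in $\mathbb{Q}(t)$ is again a ratio of two polynomials with positive rational coefficients; hence $P(L)\in\mathbb{Q}^\times_{>0}(t)$.

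It remains to verify multiplicativity and to pass to the quotient. Let $L=L_1\ast L_2$ be a Murasugi sum of links along minimal index Seifert surfaces $R_1,R_2$. By Gabai's additivity the summed surface $R_1\ast R_2$ is again of minimal index, so the three groups appearing in Theorem~\ref{thm1} are exactly the ones used above to define $P(L)$, $P(L_1)$, $P(L_2)$. Since the Maslov-graded Poincar\'e polynomial of a tensor product of finite-dimensional graded $\F_2$-vector spaces is the product of the Poincar\'e polynomials -- and is insensitive to the Alexander grading, so the non-additivity of $\frak{i}$ itself is irrelevant, the $[l-1]$-type shifts in Theorem~\ref{thm1} being exactly what lines up the Maslov gradings -- Theorem~\ref{thm1} yields $P(L)=P(L_1)P(L_2)$ at once. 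Finally, $\mrKKnots$ is by construction the free abelian group on isotopy classes of links modulo the relations $[L_1\ast L_2]=[L_1]+[L_2]$, so by its universal property any function from isotopy classes of links to an abelian group that respects these relations factors uniquely through $\mrKKnots$; writing $\mathbb{Q}^\times_{>0}(t)$ multiplicatively, $L\mapsto P(L)$ is such a function by the previous sentence, so it descends to a group homomorphism $P\colon\mrKKnots\to\mathbb{Q}^\times_{>0}(t)$, as claimed.

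The only step that genuinely requires care -- and the main potential obstacle -- is the bookkeeping in the definition of $P$: one must fix the grading normalization of ``the top group'' for links so that its Poincar\'e polynomial is \emph{honestly} multiplicative under Murasugi sum, rather than merely multiplicative up to a component-dependent monomial factor. Adopting the normalization of Theorem~\ref{thm1} (equivalently, building the $[l-1]$ shift into the definition of the top group) is precisely what makes this go through, after which every remaining step is formal.
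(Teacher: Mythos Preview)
Your proposal is correct and follows the route the paper implicitly intends: the paper does not give a separate proof of this corollary, treating it as an immediate consequence of Theorem~\ref{thm1} together with the definition of $\KKnots$ (the surrounding discussion of the rank homomorphism to $\mathbb{Q}_{>0}^\times$ is the ungraded version of exactly your argument). Your write-up simply makes explicit the ingredients the paper leaves to the reader---Gabai's additivity to ensure $R_1*R_2$ is again minimal index, the nonvanishing of the extremal group, and the universal property of the quotient presentation---and your care with the $[l-1]$ normalization is appropriate, since that is precisely the shift under which Theorem~\ref{thm1} gives honest multiplicativity.
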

\noindent It would be interesting to identify the image of $P$, a problem in the realm of geography questions for knot Floer homology.   In particular, we have the following natural question:
\begin{ques}\label{geography} Is every Laurent polynomial with $\mathbb{N}$ coefficients realized as the Poincar{\'e} polynomial of the top group of knot Floer homology  for some link in the 3-sphere?
\end{ques}
\noindent Obstructions for a bigraded collection of abelian groups to arise as knot Floer homology groups were obtained in \cite{HeddenWatson,BaldwinVelaVick}, but these place no restriction on the top group.

Despite a lack of understanding of the geography question for the top group of knot Floer homology, our results indicate that any collection of knots whose Poincar{\'e} polynomials are coprime are linearly independent in the Grothendieck group, even if their total rank is the same. In particular, the kernel of the homomorphism  $\mathbb{Q}^\times_{>0}(t)\rightarrow \mathbb{Q}^\times_{>0}$ induced by setting $t$ equal $1$ intersects the image of $P$ non-trivially.  Perhaps more concretely, we have 
\begin{cor}\label{cor5}  Suppose the Poincar{\'e} polynomial of the top group of knot Floer homology of a link $L\subset S^3$ is irreducible, viewed as a Laurent polynomial over $\Z$.  If $L$ is a Murasugi sum of links $L=L_1\ast L_2$, then one of $L_i$ is fibered. 
\end{cor}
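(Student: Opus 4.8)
The plan is to deduce Corollary~\ref{cor5} formally from Theorem~\ref{thm1} (equivalently, from Corollary~\ref{cor3}) together with the fibredness detection theorem for knot Floer homology, so the argument will be short; all the real work sits in Theorem~\ref{thm1}. First I would fix the set-up: as everywhere in the paper, the Murasugi sum $L=L_1\ast L_2$ is taken along minimal index Seifert surfaces $R_1,R_2$, so by Gabai's additivity $R_1\ast R_2$ is a minimal index surface for $L$ and $\frak{i}(R_1\ast R_2)=\frak{i}(R_1)+\frak{i}(R_2)$. Consequently the groups $\hatHFK(L,-\frak{i}(R_1\ast R_2))$ and $\hatHFK(L_i,-\frak{i}(R_i))$ appearing in Theorem~\ref{thm1} are the bottommost nonzero Alexander slices, hence, via the symmetry of knot Floer homology under reversal of the Alexander grading, are identified with the top groups up to an explicit Maslov shift.

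Next I would pass to Poincar\'e polynomials over $\F_2$ in the Maslov variable $t$. In Theorem~\ref{thm1} the shifts satisfy $[l-1]=[l_1-1]+[l_2-1]$ and the Alexander-reversal shifts are governed by $\frak{i}(R_1\ast R_2)=\frak{i}(R_1)+\frak{i}(R_2)$, so all the grading shifts cancel exactly and one obtains an honest equality $P(L)=P(L_1)\cdot P(L_2)$ in $\Z[t,t^{-1}]$, where $P(\cdot)$ is the Poincar\'e polynomial of the top group. (This is precisely the assertion, already packaged in Corollary~\ref{cor3}, that $P$ is a well-defined homomorphism; I would simply cite it.)

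Then comes an elementary divisibility argument in $\Z[t,t^{-1}]$. Both $P(L_1)$ and $P(L_2)$ are nonzero Laurent polynomials with nonnegative integer coefficients, and their product $P(L)$ is irreducible by hypothesis; hence one of them, say $P(L_1)$, is a unit of $\Z[t,t^{-1}]$. Since the units are $\pm t^{k}$ and the coefficients are nonnegative, $P(L_1)=t^{k}$ for some $k\in\Z$, so the top group $\hatHFK(L_1,\frak{i}(R_1);\F_2)$ has total rank $P(L_1)(1)=1$. Finally I would invoke the fibredness detection theorem: a link in $S^3$ whose top knot Floer homology group has rank one over $\F_2$ is fibred, with fibre its (then unique) minimal index Seifert surface — due to \Ozsvath--\Szabo\ and Ghiggini in low genus and Ni in general for knots, and to Ni for links. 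Applying it to $L_1$ shows $L_1$ is fibred, completing the proof.

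I do not expect a substantive obstacle here: the only point requiring care is the bookkeeping in the second step, namely verifying that no spurious monomial factor $t^{c}$ survives and one genuinely gets an equality of Laurent polynomials (not merely an equality up to units), and that verification is exactly the well-definedness of the homomorphism $P$ in Corollary~\ref{cor3}. Everything difficult has already been done in Theorem~\ref{thm1}.
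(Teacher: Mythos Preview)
Your argument is correct and is exactly what the paper intends: Corollary~\ref{cor5} is stated without proof because it follows immediately from the multiplicativity in Theorem~\ref{thm1} (packaged as Corollary~\ref{cor3}) together with Ni's theorem that rank one in the top Alexander grading characterizes fibredness for links.

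One small slip worth flagging, since you single it out as ``the only point requiring care'': the bookkeeping identities you assert, namely $l-1=(l_1-1)+(l_2-1)$ and $\frak{i}(R_1*R_2)=\frak{i}(R_1)+\frak{i}(R_2)$, are both false in general. Plumbing two positive Hopf bands yields a trefoil, where $l_1=l_2=2$ but $l=1$, and $\frak{i}(R_1)+\frak{i}(R_2)=2$ while $\frak{i}(R_1*R_2)=1$. Fortunately this is precisely the step where no care was needed: Theorem~\ref{thm1} still gives $P(L)=t^{c}\,P(L_1)P(L_2)$ for some $c\in\Z$, and since the units of $\Z[t,t^{-1}]$ are exactly $\pm t^{k}$, irreducibility of $P(L)$ forces one of $P(L_1),P(L_2)$ to be a unit regardless of the value of $c$. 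The rest of your deduction then goes through verbatim.
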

As another quick corollary, we can show that alternating links or, more generally, links with thin Floer homology, are far from generating all links under Murasugi sum.
\begin{cor}\label{cor6}  Suppose the top group of the knot Floer homology of $L\subset S^3$ is non-trivial in more than one Maslov grading.  Then $L$ is not a Murasugi sum of alternating links nor is any link which contains $L$ as a Murasugi summand.
\end{cor}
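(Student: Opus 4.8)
The plan is to deduce the statement from the graded tensor product formula of Theorem~\ref{thm1}, together with the thinness of the knot Floer homology of alternating links. Throughout, ``Murasugi sum'' means ``Murasugi sum along minimal index Seifert surfaces'', in keeping with the convention used to define $\KKnots$; this restriction is essential, since by Able--Hirasawa every knot arises as a Murasugi sum of any two given knots once non-minimal index surfaces are permitted.

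The key step is the following assertion: \emph{if a link $L'$ is obtained from alternating links by iterated Murasugi sum, then the top group of $\hatHFK(L')$ is supported in a single Maslov grading.} I would prove this by induction on the number of summands. The base case is \Ozsvath--\Szabo's theorem that alternating links have thin knot Floer homology, so the top group is concentrated in the single Maslov grading determined by the top Alexander grading. For the inductive step, write $L' = L_1' * L_2'$ along minimal index Seifert surfaces $R_1, R_2$, where each $L_i'$ is itself an iterated Murasugi sum of alternating links. By Gabai's additivity of the Thurston norm under Murasugi sum \cite{Gabai}, $R := R_1 * R_2$ is a minimal index surface for $L'$; hence $-\mathfrak{i}(R)$ is the bottom Alexander grading of $\hatHFK(L')$, and by the symmetry of knot Floer homology $\hatHFK(L', -\mathfrak{i}(R))$ agrees, up to a Maslov grading shift, with the top group of $L'$ (and similarly for $L_1', L_2'$). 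Now Theorem~\ref{thm1} identifies $\hatHFK(L', -\mathfrak{i}(R))$, up to a shift, with $\hatHFK(L_1', -\mathfrak{i}(R_1)) \otimes \hatHFK(L_2', -\mathfrak{i}(R_2))$; by the inductive hypothesis each factor is supported in a single Maslov grading, so the tensor product is too, and therefore so is the top group of $L'$.

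Granting the key step, both assertions of the corollary follow. If $L$ were a Murasugi sum of alternating links, the key step would force the top group of $\hatHFK(L)$ into a single Maslov grading, contradicting the hypothesis. Suppose instead that $M = L * N$ along minimal index surfaces and that $M$ is itself a Murasugi sum of alternating links. The key step puts the top group of $M$ into a single Maslov grading; but Theorem~\ref{thm1} and Gabai's theorem, applied to $M = L * N$, identify that top group, up to a Maslov grading shift, with the tensor product of the top groups of $L$ and $N$. Both of the latter are non-trivial (this is the genus-detection property of $\hatHFK$, guaranteeing the top group of any link is non-zero), and a graded tensor product of two non-zero $\F_2$-vector spaces lies in a single Maslov grading only if each factor does; hence the top group of $L$ would lie in a single Maslov grading, again a contradiction.

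The argument is essentially bookkeeping once Theorem~\ref{thm1} is available, so I do not anticipate a serious obstacle. The one place demanding genuine care is the reconciliation of grading conventions: the shifts by $l-1$ in Theorem~\ref{thm1}, the translation between the extremal group $\hatHFK(L, -\mathfrak{i}(R))$ and the ``top group'' via the symmetry of $\hatHFK$, and the meaning of the ``top group'' for links with several components. One should also record explicitly that it is precisely the minimal-index hypothesis --- built into the statement through the standing convention --- that keeps $R_1 * R_2$ minimal index and thereby licenses the passage between the extremal and top groups. A complete write-up would spend most of its length on these conventional matters.
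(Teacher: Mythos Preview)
Your proposal is correct and follows the approach the paper intends: Corollary~\ref{cor6} is stated in the introduction without a separate proof, being an immediate consequence of Theorem~\ref{thm1} (equivalently, of the Poincar\'e polynomial homomorphism of Corollary~\ref{cor3}) together with the thinness of alternating links. Your inductive argument and the observation that a graded tensor product of nonzero vector spaces is concentrated in a single grading only if both factors are is exactly the unpacking of that implication; the only addition worth making explicit is that the ``contains $L$ as a summand'' clause also covers iterated sums, which follows by iterating your second-assertion argument.
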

\noindent For instance, the top group of the  Kenoshita-Terasaka knot and its mutant, the Conway knot, have Poincar{\'e} polynomials given by $1+t$, up to multiplication by $t^k$ (with $k=2$ for the KT knot and $k=3$ for the Conway knot) \cite[Theorems 1.1 and 1.2]{POZSzmutation}.  Therefore neither can be realized as a Murasugi sum of alternating links, nor is there any way to iteratively   Murasugi sum them with other links to eventually arrive at a Murasugi sum of alternating (or thin) links.    

As a final corollary, our results can be used in conjunction with the literature to calculate the top group of an arbitrary cable knot:

\begin{cor}  Let $K_{p,q}$ be the $(p,q)$ cable of a knot $K$ with Seifert genus $g$.  Then for any $p>0$, we have  
\begin{enumerate}
\item If $q>0$, then $\hatHFK_*(K_{p,q},pg+\frac{(p-1)(q-1)}{2})\cong \hatHFK_*(K,g)$
\item If $q<0$, then $\hatHFK_*(K_{p,q},pg+\frac{(p-1)(q-1)}{2})\cong \hatHFK_{*-(p-1)(2g-q-1)}(K,g)$
\end{enumerate}
\end{cor}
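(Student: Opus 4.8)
The plan is to read off item~(1) directly from Theorem~\ref{thm1} applied to a plumbing description of the cabling surface, and to obtain item~(2) by combining this with the known structure of $\CFK^\infty$ of cables. For $q>0$, I would start from the classical fact (Stallings, Gabai \cite{Stallings,Gabai}) that if $R$ is a minimal genus Seifert surface for $K$, then the cabling surface $\Sigma(K_{p,q})$ is a Murasugi sum of $R$ with the fiber surface of a positive torus link $T$ of genus $(p-1)g+\tfrac{(p-1)(q-1)}{2}$ (so $T=T_{p,2g+q}$ when $\gcd(p,2g+q)=1$). Since $R$ and the fiber surface $F(T)$ are minimal genus and $\frak{i}$ is additive under Murasugi sum, $\Sigma(K_{p,q})$ is then a minimal genus Seifert surface for $K_{p,q}$, of genus $pg+\tfrac{(p-1)(q-1)}{2}$; in particular its extremal knot Floer homology is the top group.

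Next I would apply Theorem~\ref{thm1}, using that $T$ is fibered and hence has one–dimensional extremal knot Floer homology, supported in a single Maslov grading (the bottom grading $-2g(T)$ of a positive torus link, with the normalization in Theorem~\ref{thm1}, which one checks is consistent in the multi–component case). This gives $\hatHFK_*(K_{p,q},-\frak{i}(\Sigma))\cong\hatHFK_*(K,-g)$ as graded groups up to a Maslov shift by $-2g(T)$. I would then apply the grading–reversal symmetry $\hatHFK_d(L,s)\cong\hatHFK_{d-2s}(L,-s)$ to both $K_{p,q}$ and $K$ to pass to the top Alexander grading; the shifts cancel exactly because $2g(K_{p,q})=2g+2g(T)$, producing item~(1) with no Maslov shift. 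For $q<0$ the Alexander grading $pg+\tfrac{(p-1)(q-1)}{2}$ is strictly below the genus $pg+\tfrac{(p-1)(|q|-1)}{2}$ of $K_{p,q}$, so Theorem~\ref{thm1} does not reach it; here I would instead invoke the literature's computation of $\CFK^\infty(K_{p,q})$ in terms of $\CFK^\infty(K)$ (Hedden, Hom), whose ``staircase with a box'' form exhibits, at that Alexander grading, a copy of the top group $\hatHFK(K,g)$ shifted up in Maslov grading by $(p-1)(2g-q-1)$, and I would cross–check against $K$ the unknot (where both cases reduce to the explicit torus–knot computation) and against item~(1) for the mirror $\overline K$, which pins down the genuinely extremal groups at the opposite end of the Alexander grading.

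The hard part will be the grading bookkeeping. For $q>0$ one must make the plumbing description of the cabling surface precise in the generality needed (all $p>0$, $K$ not necessarily fibered), treat carefully the case $\gcd(p,2g+q)>1$ where $T$ is a link so that the $[l-1]$ normalizations in Theorem~\ref{thm1} must be handled correctly, and track the Maslov grading of the extremal generator of $T$ through any iteration of Theorem~\ref{thm1}. For $q<0$ the obstacle is instead to extract the relevant non–extremal group from the $\CFK^\infty$ picture and to match grading conventions, so that the shift is seen to be exactly $(p-1)(2g-q-1)$; this is where all of the arithmetic, and the genuine difference between the two cases, resides.
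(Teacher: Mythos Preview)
Your approach diverges from the paper's in both cases. The paper uses a single, uniform Murasugi sum decomposition due to Neumann--Rudolph, $K_{p,q}\cong K_{p,\operatorname{sign}(q)}\ast T_{p,q}$, and applies Theorem~\ref{thm1} to it for \emph{both} signs of $q$. This reduces the problem to knowing the top group of $K_{p,\pm1}$, which the paper pins down by applying Theorem~\ref{thm1} again to $K_{p,pn+1}=K_{p,1}\ast T_{p,pn+1}$ (respectively $K_{p,-pn+1}=K_{p,-1}\ast T_{p,-pn+1}$) for $n\gg0$, together with the computations of \cite{cabling,cablingII} identifying the extremal groups of these particular cables with those of $K$.

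For $q>0$, the decomposition you invoke---that the cabling surface is a single Murasugi sum of $R$ with the fiber of $T_{p,2g+q}$---is not a result in Stallings or Gabai; the cited papers concern closure of fiberedness and genus additivity under Murasugi sum, not this particular structure on cable surfaces. The standard decomposition (Neumann--Rudolph) leaves $K_{p,1}$, not $K$, as a summand, and this is precisely why an appeal to the cabling literature is still needed. If your decomposition were correct it would shortcut that step, but it requires proof; the dependence of your torus summand on $g(K)$, and the $\gcd(p,2g+q)>1$ complication you already flag, are warning signs that the second summand is not simply the pattern torus knot sitting in the solid torus.

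For $q<0$, you correctly observe that the displayed Alexander grading $pg+\tfrac{(p-1)(q-1)}{2}$ lies strictly below the genus (indeed, item~(2) as written fails already for $K$ the unknot). The paper's own argument makes clear that the intended grading is the genus $pg+\tfrac{(p-1)(|q|-1)}{2}$, so item~(2) is also meant to describe the top group. With that correction Theorem~\ref{thm1} applies directly via Neumann--Rudolph, and no appeal to the $\CFK^\infty$ literature is needed.
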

\noindent The key observation, due to Neumann and Rudolph, is that $K_{p,q}\cong K_{p,\mathrm{sign}(q)}\ast T_{p,q}$, where $\mathrm{sign}(q)$ is $\pm 1$ depending on whether $q$ is positive or negative \cite[Figure 4.2]{NeumannRudolph}.  Since $T_{p,q}$ is fibered, our main result indicates that the top group of a cable knot is isomorphic to that of $K_{p,\mathrm{sign}(q)}$ shifted by the grading of the top group of the corresponding torus knot.  As the latter is well known to be $0$ if $q>0$ and $\frac{(p-1)(-q-1)}{2}$ if $q<0$, the corollary can then be deduced if the top group is known for two particular examples of $K_{p,q}$; one with $q$ positive, and one with $q$ negative.  But the results of \cite{cabling,cablingII} (cf. \cite{mattthesis}) indicate that the top group of $K_{p,pn+1}$  is isomorphic to that of $K$ and the bottom group of $K_{p,-pn+1}$ is isomorphic to that of $K$, provided in both cases that $n\gg0$.  Together with the symmetry between the top and bottom groups of knot Floer homology, and the observations above, the corollary follows.  This is essentially the argument for the special case of fibered cable knots from \cite{cablingcontactcomplex}.

We conclude this introduction by highlighting a few problems and questions raised by our work.  Perhaps the most interesting and challenging is
\
\begin{prob}  Determine the isomorphism type of $\KKnots$.  
\end{prob}
\noindent Solving this would, ideally, yield an explicit presentation for $\KKnots$ by generators and relations. 
Note that Gabai's work implies that the link invariant $b^{min}_1$ obtained by minimizing the first Betti number over all Seifert surfaces for a given link, is additive under Murasugi sums. Hence, it descends to a homomorphism $B^{min}_1:\KKnots\rightarrow K(\mathbb{N}^+)\cong \mathbb{Z}$.  An affirmative answer to the following question would solve the problem:
\begin{ques} Is the homomorphism  $P\oplus B^{min}_1: \KKnots\rightarrow \mathbb{Q}^\times_{>0}(t)\oplus \mathbb{Z}$ an isomorphism?
\end{ques}
\noindent Note that an affirmative answer would require an affirmative answer to the geography problem raised by Question \ref{geography}.  Moreover, combined with any of the known algorithms to compute knot Floer homology (e.g. \cite{MOS,Beliakova,POZSz-algmatchHFK}), one would also arrive at a solution to the isomorphism problem in $\KKnots$ and, presumably,  a presentation. While we are inclined to believe the answer is no, the restriction of $P\oplus B^{min}_1$ to the subgroup generated by fibered links {\em is}  an isomorphism onto its image.  Indeed, the image of $P$ on the fibered subgroup is the multiplicative subgroup $\{t^n\}_{n\in \Z}$ and the power $n$ associated to a given fibered link is the Hopf invariant of the 2-plane field associated to its corresponding open book decomposition (up to normalization, the Hopf invariant is equal to Rudolph's {\em enhancement} of the Milnor number \cite{RudolphEnhancement}). 
To conclude with a more tractable question, we leave the reader with:

\begin{ques} Does the Poincar{\'e} polynomial homomorphism $P: \KKnots\rightarrow \mathbb{Q}^\times_{>0}(t)$ contain an infinite rank subgroup in the kernel of the rank homomorphism obtained by setting $t=1$ in the Poincar{\'e} polynomial?
\end{ques}

\noindent {\bf Outline:} The paper is organized as follow: In Section~\ref{sec:2}, we review
the knot Floer homology for links, recall the definition for Murasugi
sum, construct Heegaard diagrams associated to Seifert surfaces, and
describe the Murasugi sum operation in terms of Heegaard diagrams. In
Section~\ref{sec:3}, we study some local isotopies on Heegaard
diagrams which will largely reduce the number of generators; moreover,
we prove that there is a subcomplex that remains unchanged when
applying these isotopies if some technical conditions are
satisfied. In Section~\ref{sec:4}, we use the simplifications from
the previous section to prove the main results.

 \subsection*{Acknowledgement} This article started almost fifteen
 years ago, but was placed on hiatus multiple times due to a variety
 of reasons. We are grateful to Robert Lipshitz, Yi
 Ni, Peter Ozsv\'ath, and Zolt\'an Szab\'o for many helpful conversations about this
 paper at various points in the past decade.  During this period,
 Zhechi Cheng was supported from NSFC grant No.~12126101, NSF
 grants DMS-1609148, DMS-1564172 and Swedish Research Council under grant No.~2016-06596, Matthew Hedden was supported from
 NSF grants DMS-0706979, DMS-0906258, CAREER DMS-1150872, DMS-1709016, DMS-2104664
 and an Alfred P. Sloan Research Fellowship and Sucharit Sarkar was
 supported from Clay Research Fellowship and NSF grants CAREER
 DMS-1350037, CAREER DMS-1643401, and DMS-1905717.

\section{Heegaard diagrams adapted to Seifert surfaces}\label{sec:2}

\subsection{Heegaard diagrams}\label{subsec:intro}
We begin with a quick review of Heegaard diagrams. Most of what follows extends in a straightforward manner to arbitrary closed connected oriented three-manifolds, but since
we are primarily concerned with the operation of Murasugi sum in $S^3$
we will specialize our definitions and constructions to this situation.  We begin by recalling the definition of a Heegaard diagram: 

\begin{defn} A \emph{Heegaard diagram for $S^3$} is a $4$-tuple
\[\mc{H}=(\Sigma_{(g)},\alpha^{(g+k-1)},\allowbreak\beta^{(g+k-1)},\allowbreak
w^{(k)})\] where 
\begin{itemize}[leftmargin=*]
\item $\Sigma\sbs S^3$ is an oriented surface of genus $g$ whose complement has two components, the closures of which are  genus
$g$ handlebodies $U_{\al}$ and $U_{\be}$ with
$\Si=\del U_{\al}=-\del U_{\be}$;
\item
$\alpha^{(g+k-1)}=(\alpha_1,\ldots,\alpha_{g+k-1})$ (respectively,
$\beta^{(g+k-1)}=(\beta_1,\ldots,\beta_{g+k-1})$) is a collection of disjoint
simple closed curves on $\Sigma$, each bounding a disk in the
handlebody $U_{\al}$ (respectively, $U_{\be}$), such that
$\Sigma\setminus\alpha$ (respectively, $\Sigma\setminus\beta$) has
exactly $k$ components;
\item the $\alpha$ circles are transverse
to the $\beta$ circles;
\item $w=(w_1,\ldots,w_k)$ is a collections of
markings on $\Si$, such that each component of $\Sigma\setminus\alpha$
contains a $w$ marking, and each component of $\Sigma\setminus\beta$
contains a $w$ marking.
\end{itemize} 
\end{defn} 
\noindent Unless otherwise mentioned, we will assume our
Heegaard diagrams to satisfy a certain technical condition called (weak)
\emph{ admissibility} \cite[Definition 3.5]{POZSzlinkinvariants} (cf. \cite[Definition 4.10]{POZSz}). A \emph{generator} is a $(g+k-1)$-tuple
$x=(x_1,\ldots,x_{g+k-1})$ of points in $\Sigma$, called the
\emph{coordinates} of $x$, such that each $\al$ and $\be$ circle contain exactly one of
the coordinates;
we will denote the set of generator by $\mc{G}_{\mc{H}}$.\\

 Let $L\sbs S^3$ be an $l$-component link and $R$ a Seifert
surface for $L$, which we assume to be oriented but not necessarily connected.   We have the following notion of a diagram adapted to $R$ \cite{OSz-hf-applications, YN06, AJ, MHAJSS},
  \begin{defn}  A \emph{Heegaard diagram adapted to $R$} is a $6$-tuple
\[\mc{H}=(\Sigma_{(g)},\alpha^{(g+k-1)},\beta^{(g+k-1)},z^{(k)},w^{(k)},S)\]
satisfying
\begin{itemize}[leftmargin=*]
\item $(\Sigma,\al,\be,w)$ and $(\Si,\al,\be,z)$ are both Heegaard diagrams
for $S^3$; 
\item $S\sbs \Si$ is an oriented surface-with-boundary which is isotopic to $R$ in $S^3$;
\item  each generator has at most $(k-\chi(R))$ coordinates inside $R$; 
\item the $2k$ markings $z=(z_1,\ldots,z_k)$ and $w=(w_1,\ldots,w_k)$ all lie on
$\del S$; 
\item each component of $\del S$ contains at least one marking,
and on each component of $\del S$, the $z$ markings and the $w$
markings alternate; 
\item the oriented arcs in $\del S$ joining each $z$
marking to the next $w$ marking are disjoint from the $\alpha$
circles, and the arcs in $\del S$ joining each $w$ marking to the next
$z$ marking are disjoint from the $\beta$ circles.
\end{itemize}
\end{defn}

Given a Seifert surface $R$ for an $l$-component link $L\sbs S^3$, we
can employ the following slightly enhanced version of the
algorithm from \cite{MHAJSS}, or a further modification thereof, to
construct a Heegaard diagram adapted to $R$.
\vskip0.1in
\begin{alg}{\em Adapting a Heegaard diagram to a Seifert surface} $R\subset S^3$.

\begin{enumerate}[leftmargin=*,label=(H-\arabic*),ref=H-\arabic*]
\item Embed a graph $G$ with $n$ vertices and $(n-\chi(R))$ edges
  in the interior of the surface $R$, such that $R$ deform retracts to
  $G$. Therefore, $R$ is isotopic to $\ol{\nbd_R(G)}$, the closure of
  a regular neighborhood of $G$ in $R$. This is essentially a band
  presentation of $R$. 
\item Consider $\ol{\nbd_{S^3}(G)}$, the closure of a regular
  neighborhood of $G$ in $S^3$. Although $\ol{\nbd_{S^3}(G)}$ is a
  union of handlebodies, its complement in $S^3$ is usually not. 
  Rectify this by tunneling out some one-handles from the complement
  and adding them to $\ol{\nbd_{S^3}(G)}$, so as to get a Heegaard
  decomposition of $S^3$.
\item Let $U_{\al}$ be the handlebody obtained from
  $\ol{\nbd_{S^3}(G)}$ by adding these new handles, and let $U_{\be}$
  be complementary handlebody. Let $\Si$ be the dividing Heegaard surface,
  oriented as the boundary of $U_{\al}$. 
\item Push off $\ol{\nbd_R(G)}$
  towards $\Si$ to get a surface $S\sbs \Si$ in a way so as to ensure
  that the orientation on $S$ induced by $R$ agrees with the one
  induced by $\Si$. 
\item Place $2k$ distinct markings
  $z=(z_1,\ldots,z_k)$ and $w=(w_1,\ldots,w_k)$ on $\del S$ such that
  each component of $\del S$ contains at least one $z$ and $w$ marking, and on
  each component of $\del S$, the $z$ markings and the $w$ markings
  alternate. 
\item\label{item:construct-al-be-on-diagram} If the surface $\Si$ has genus $g$, then  draw
  $(g+k-1)$ $\al$ circles and $(g+k-1)$ $\be$ circles on $\Si\sm(z\cup
  w)$ such that the following holds: 
  \begin{enumerate}[leftmargin=*]
  \item The $\al$ circles are disjoint from one another.
  \item The $\be$ circles are disjoint from one another.
  \item The $\al$ circles intersect the $\be$ circles transversally.
  \item Each component of $\Si\sm\al$ contains one $z$ marking and one
    $w$ marking
  \item Each component of $\Si\sm\be$ contains one $z$ marking and one
    $w$ marking.
  \item\label{item:alpha-max-intersect} Exactly $(k-\chi(R))$ $\al$ circles intersect $S$.
  \end{enumerate}
\item\label{item:finger-moves-heegaard-diagram}  From each $w$ marking, as one travels along $-\del S$
  to the next $z$ marking,  isotope all the $\al$ circles that one
  encounters, by finger moves, across the $z$ marking. Similarly, from
  each $w$ marking, as one travel along $\del S$ to the next $z$
  marking,  isotope all the $\be$ circles that one encounters, by
  finger moves, across the $z$ marking. 
\item Finally, perform  isotopies of the $\alpha$ circles and the
  $\beta$ circles in $\Si\sm(z\cup w)$ to make the diagram admissible.
\end{enumerate}
\end{alg}

\noindent Note that such a Heegaard diagram is indeed 
adapted to $R$. In particular, (\ref{item:alpha-max-intersect})
ensures that each generator has at most $(k-\chi(R))$ coordinates
inside $R$. 

We now  spell out an explicit way of making all the choices alluded to in the previous list. The process is best understood in conjunction with an explicit example, as illustrated in Figure
\ref{fig:example}. At various points it will be useful to make minor alterations to
these choices, but for the sake of brevity (and sanity), we will not explicitly describe all the  choices made each time a Heegaard diagram is constructed.

\begin{figure}
\centering
\includegraphics[scale=0.7]{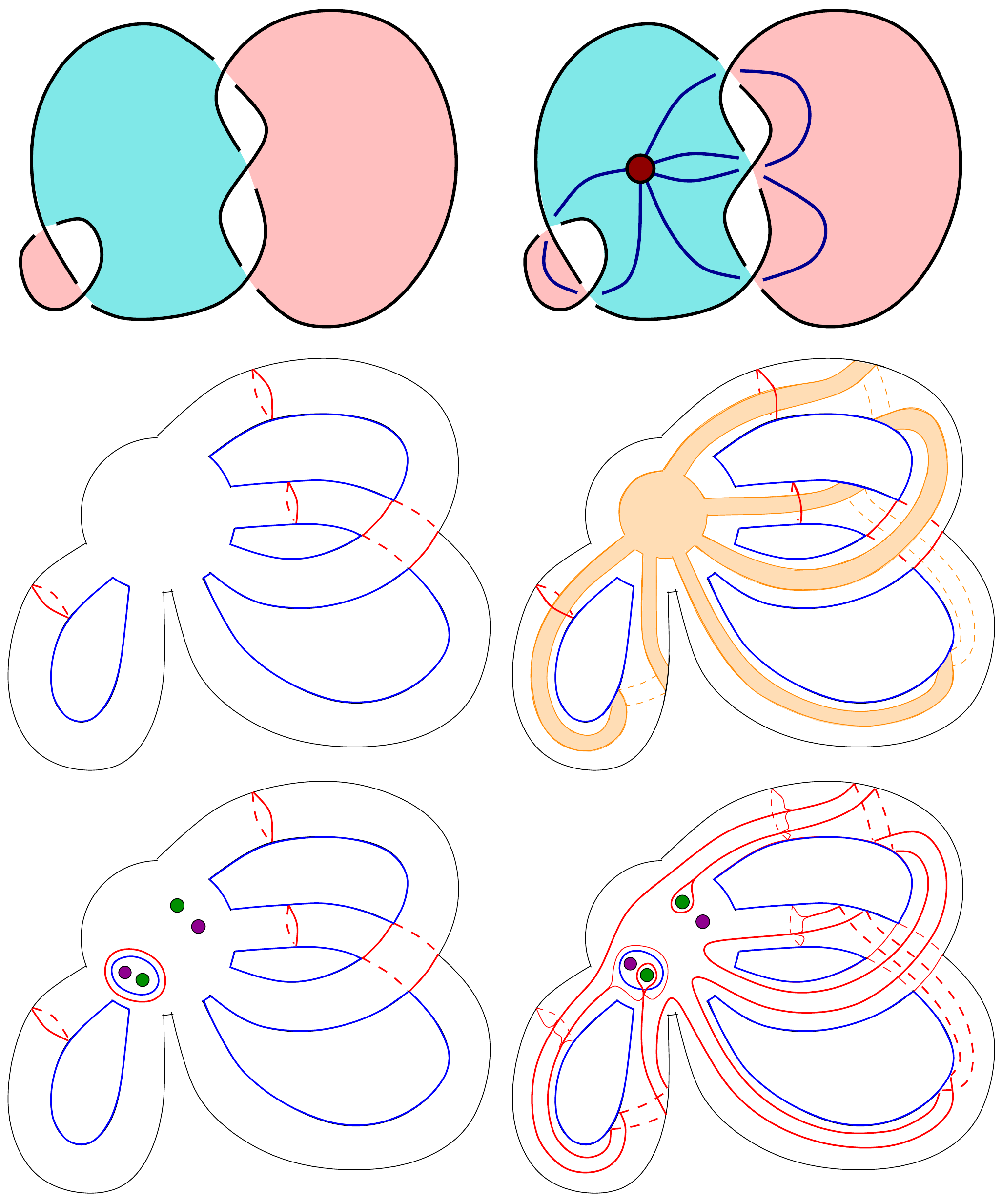}
\caption{\textbf{An algorithm for constructing a Heegaard diagram
  adapted to a Seifert surface.} As usual, the red circles are $\alpha$
  and the blue ones are $\beta$. The surface $S$ is orange. The
  magenta dots are $w$-markings and the green dots are
  $z$-markings. In the last diagram, the $\al$ circles are represented
  by train tracks, with the thin red lines denoting
  curves.}\label{fig:example}
\end{figure}

\begin{alg} {\em Explicit diagram adapted to a planar projection of an embedded Seifert surface} $R\subset S^3$.

\begin{enumerate}[leftmargin=*,label=(E-\arabic*),ref=E-\arabic*]
\item Given a Seifert surface $R$ for an $l$-component link
  $L\sbs S^3$, view it as a surface lying in $\R^3$. Consider a
  projection $\pi:\R^3\rightarrow\R^2$, and assume that $\pi|_R$ is
  generic and the image $\pi(R)$ is connected.
\item If $R$ has $n$ components, let $G$ be a graph with $n$ vertices
  and $(n-\chi(R))$ edges, embedded in the interior of $R$, such that
  the following holds:
  \begin{enumerate}[leftmargin=*]
  \item $R$ deform retracts to $G$.
  \item The vertex $v$ is a regular point of $\pi|_R$.
  \item $\pi|_G$ is an immersion with no triple points, and all the
    preimages of the double points lie in the interior of the
    edges.
  \end{enumerate}
\item Let $U_{\al}=\ol{\nbd_{S^3}(\pi(G))}$ be a genus $g$ handlebody
  and let $U_{\be}=S^3\sm \nbd_{S^3}(\pi(G))$ be the complementary
  handlebody. Let $\Si=\del U_{\al}$ be the Heegaard surface.
\item Designate $g$ of the $(g+1)$ circles in $\Si\cap\R^2$ as $\be$
  circles. 
\item For each of the $(n-\chi(R))$ edges of $G$, choose a point in
  the image of the interior of the edge that is not a double point,
  and draw an $\alpha$ circle on $\Sigma$ which is the boundary of a normal disk to $G$ inside
  $U_{\al}=\ol{\nbd_{S^3}(\pi(G))}$. 
  Draw an additional  $\al$ circle near each of the $(g+\chi(R)-1)$
  double points of $\pi|_G$, such that the $\al$ circle bounds a disk
  in $U_{\al}$ near the double point, and if the disk were surgered
  out, then $U_{\al}$ locally would have two components, corresponding
  to the two preimages of the double point, with the same crossing
  information. 
\item For each vertex $v_i$ of $G$, let $p_i\in\Si$ be the unique
  point such that $\pi(p_i)=\pi(v_i)$ and $|d\pi|_{_\Si}(p_i)|$  has
  the same sign as $|d\pi|_{_R}(v_i)|$. Let $D_i\sbs \Si$ be a small
  disk containing $p_i$; let $D=\cup_i D_i$.
\item\label{item:attach-bands-carefully} For each of the $(n-\chi(R))$ edges of $G$, attach a band to $D$  lying in
  $\Si\sm (\al\text{ circles near the double
    points})$. Choose each band so that it
  deformation retracts onto an arc that projects to the corresponding
  edge under $\pi$, and so that the surface framing of the band in $\Si$ is
  same as the surface framing of the corresponding edge in $R$. Let
  $S\subset \Si$ be the surface obtained from $D$ by adding the bands. 
\item Put $2l$
  markings $z=(z_1,\ldots,z_l)$ and $w=(w_1,\dots,w_l)$ on
  $\del S\cap\del D$, such that each component of $\del S$ contains
  exactly one $z$ marking and exactly one $w$ marking, and the $l$
  arcs $b_1,\cdots,b_l\sbs\del S$, which join the $w$ markings to the
  $z$ markings, are supported inside $\del S\cap\del D$.
\item For each disk $D_i$, add an $\alpha$ circle around all but one
  of the $b_j$'s supported in $D_i$. This adds a total of $(l-g)$
  $\al$ circles.
\item For $2\leq i\leq l$, add a $\be$ circles around $b_i$.
\item Perform finger moves on the $\al$ circles, as
  described  in~(\ref{item:finger-moves-heegaard-diagram}), to
  obtain the final Heegaard diagram. One can check
  that the diagram thus obtained is admissible. Furthermore,
  since the surface $S$ was disjoint from the $(g+\chi(R)-1)$ $\alpha$
  circles near the double points,
  see~(\ref{item:attach-bands-carefully}), it remains disjoint from them
  even after the finger moves, and consequently, it only intersects
  $(g+l-1)-(g+\chi(R)-1)=(l-\chi(R))$ $\alpha$ circles.
\end{enumerate}
\end{alg}

\subsection{Knot Floer homology}\label{subsec:knotfloer}
We  briefly recall the definition of the ``tilde" version of
Heegaard Floer homology, essentially following
\cite[Section 6.1]{POZSzlinkinvariants} cf. \cite[Proposition 2.5]{MOS}. Given a Heegaard diagram for $S^3$, 
$\mc{H}=(\Sigma_{(g)},\alpha^{(g+k-1)},\beta^{(g+k-1)},w^{(k)})$, the chain complex $\tCF_{\mc{H}}$ is the
$\F_2$-module freely generated by the elements of $\mc{G}_{\mc{H}}$.

Given generators $x,y\in\mc{G}_{\mc{H}}$, a
\emph{domain} joining them is a $2$-chain $D$ generated by the
elementary regions of $\mc{H}$ such that $\del(\del D\cap\al)=y-x$;
here, an \emph{elementary region} is the closure of a component of
$\Si\sm(\al\cup\be)$, and we are thinking of the generators as formal
linear sums of their coordinates.  The set of all the domains joining
$x$ to $y$ is denoted by $\mc{D}(x,y)$. A domain $D$ is said to be
\emph{positive} if all its coefficients are non-negative, and at least
one of the coefficients is positive. Given a point
$p\in\Si\sm(\al\cup\be)$, let $n_p(D)$ denote the coefficient of $D$
at the elementary region containing the point $p$; 
let
$n_w(D)=\sum_{i=1}^{k} n_{w_i}(D)$.
Domains with $n_w(D)=0$ are
called \emph{empty domains}, and the set of all empty domains joining
$x$ to $y$ is denoted by $\mc{D}_0(x,y)$. 
Elements of $\mc{G}_{\mc{H}}$ carry a well-defined grading called the
\emph{absolute Maslov grading} $M$, which serves as the homological
grading of $\tCF_{\mc{H}}$. The difference in Maslov gradings can be computed as
\[M(x)-M(y)=\mu(D)-2n_w(D),\] where $D\in\mc{D}(x,y)$ is any domain, and $\mu(D)$ denotes its \emph{Maslov index}.

After choosing a generic path of almost complex structures on
$\mathrm{Sym}^{g+k-1}(\Si)$, sufficiently close to the constant path of one
induced from a complex structure on $\Si$, one can define the
\emph{contribution function} $c$, from the set of all empty Maslov
index one domains, to $\F_2$, given by $c(D)=|\mc{M}(D)/\R|$, the
number of points in a certain unparametrized moduli space. The
function $c$ has the property that it evaluates to $1$ only if
\begin{enumerate}[leftmargin=*,label=(\alph*)]
\item the domain is positive \cite[Lemma 3.2]{POZSz}, and
\item the closure of the union of the elementary regions where the
  domain is supported is connected \cite[Corollary 9.1]{JR}. 
\end{enumerate}
Then the boundary map on the chain complex $\tCF_{\mc{H}}$ is given by
\[\del x=\sum_{y\in\mc{G}_{\mc{H}}}
  \sum_{\substack{D\in\mc{D}_0(x,y)\\\mu(D)=1}}c(D)y.\] 
(The chain complex $\tCF_{\mc{H}}$
usually depends on the choice of the path of almost complex structures
on $\mathrm{Sym}^{g+k-1}(\Si)$; nevertheless, we will suppress this
from the notation.)

\begin{thm}\cite{POZSzlinkinvariants, MOS}\label{thm:main-invariance}
  The homology $\tHF_{\mc{H}}$ of the chain complex $\tCF_{\mc{H}}$
  coming from a Heegaard diagram
  $\mc{H}=(\Sigma_{(g)},\al^{(g+k-1)},\be^{(g+k-1)},w^{(k)})$ for
  $S^3$ is isomorphic, as graded $\F_2$-modules, to
  $\otimes^{k-1}(\F_2\oplus\F_2[-1])$, where $[i]$ denotes a grading
  shift by $i$.
\end{thm}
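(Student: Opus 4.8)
The plan is to deduce the statement from the invariance of $\tHF$ under Heegaard moves, a connected-sum formula, and a single model computation, so that the general case is reduced to $k=1,2$. To begin, I would invoke the invariance result of \cite{POZSzlinkinvariants, MOS}: although $\tCF_{\mc{H}}$ depends on the chosen path of almost complex structures, the homology $\tHF_{\mc{H}}$ does not, and it is moreover unchanged under the Heegaard moves (isotopy, handleslide, and (de)stabilization) relating any two weakly admissible diagrams for $S^3$ with the same number $k$ of $w$-markings. Hence it suffices, for each $k$, to produce one weakly admissible diagram $\mc{H}_k$ for $S^3$ with $k$ markings and to compute the homology of $\tCF_{\mc{H}_k}$.

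Next I would use that $\tCF$ is multiplicative under connected sum of pointed diagrams taken at a marking. If $\mc{H}'\#\mc{H}''$ is obtained by deleting a small disk around a $w$-marking of each summand, gluing along the resulting circles, and keeping a new marking near the neck, then, for a sufficiently long neck, any domain with $n_w=0$ cannot cross the neck and hence splits as $D'+D''$ with $D'$ in $\mc{H}'$ and $D''$ in $\mc{H}''$, both empty; index-one disks degenerate so that one side is constant, generators are tensor products, and $\tCF_{\mc{H}'\#\mc{H}''}\cong\tCF_{\mc{H}'}\otimes\tCF_{\mc{H}''}$ as graded complexes (invariance lets us stretch the neck freely). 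Taking $\mc{H}_k$ to be the connected sum of $k-1$ copies of a fixed genus-one, two-marking model $\mc{H}_2$ below — a genus-$(k-1)$, $k$-marking diagram for $S^3=S^3\#\cdots\#S^3$ — gives $\tHF_{\mc{H}_k}\cong\tHF_{\mc{H}_2}^{\otimes(k-1)}$; the base case $k=1$ is $(S^2,\emptyset,\emptyset,\{w_1\})$, with one generator in Maslov grading $0$ and zero differential, so $\tHF\cong\F_2$.

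Finally I would compute $\tHF_{\mc{H}_2}$. For $\mc{H}_2$ I would take the torus $\R^2/\Z^2$ with two parallel horizontal curves $\al_1,\al_2$, two parallel vertical curves $\be_1,\be_2$, and the two $w$-markings placed in the two complementary squares that are diagonal to one another; this is a Heegaard diagram for $S^3$. Weak admissibility is checked by hand: every nontrivial periodic domain with all $n_{w_i}=0$ is an integer multiple of a fixed difference of two of the four rectangular regions, so has both signs. There are exactly two generators $g_1,g_2$, and the only positive index-one domains between them are the four rectangular regions; the two containing a marking run $g_1\to g_2$ and the two empty ones run $g_2\to g_1$. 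Thus $\partial g_1=0$, while $\partial g_2=(c(R)+c(R'))\,g_1$ where $R,R'$ are the two empty rectangles; each is embedded and connected with a unique holomorphic representative, so $c(R)=c(R')=1$ and $\partial=0$ over $\F_2$. A Maslov-index computation for a rectangle gives $M(g_2)-M(g_1)=1$, hence $\tHF_{\mc{H}_2}\cong\F_2\oplus\F_2[-1]$, and combining with the reduction yields $\tHF_{\mc{H}}\cong\tHF_{\mc{H}_2}^{\otimes(k-1)}\cong\otimes^{k-1}(\F_2\oplus\F_2[-1])$.

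The one step requiring genuine care, once the cited invariance is granted, is the $k=2$ model: establishing its weak admissibility and — the real crux — the vanishing of the differential, i.e., that the two empty index-one rectangles have equal mod-$2$ counts and cancel. An $\al\leftrightarrow\be$ reflection of the torus fixing both markings and both generators while exchanging the two empty rectangles makes this transparent. Alternatively one could avoid the model entirely and read the tensor factor $\F_2\oplus\F_2[-1]$ off the effect on homology of the Heegaard move that adds a single $w$-marking, but verifying that effect is essentially the same computation.
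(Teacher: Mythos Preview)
The paper does not supply its own proof of this statement: Theorem~\ref{thm:main-invariance} is stated with citations to \cite{POZSzlinkinvariants, MOS} and then used as a black box. So there is no ``paper's proof'' to compare against; you have reconstructed a standard argument for a result the authors are quoting.

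Your argument is essentially correct and is the expected one: reduce via invariance to a model diagram, use multiplicativity under connected sum to reduce to $k=1,2$, and compute the two base cases. The $k=2$ model you chose (the $2\times 2$ grid on the torus) is exactly the one appearing in \cite{MOS}, and your analysis of its generators and differential is right. One small point you gloss over: your computation pins down $\tHF_{\mc{H}_2}$ only as a \emph{relatively} graded module $\F_2\oplus\F_2[-1]$; to fix the absolute grading (so that the top class sits in degree $0$), you should either compare to the $k=1$ case via an index-$0/3$ (de)stabilization, or invoke the absolute grading convention directly. This is routine, but worth a sentence. With that added, the proof is complete.
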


The tilde version of knot Floer homology or link Floer homology
\cite{POZSzknotinvariants, JR, POZSzlinkinvariants} is a refinement of
Heegaard Floer homology. Let
$\mc{H}=(\Sigma_{(g)},\alpha^{(g+k-1)},\beta^{(g+k-1)},z^{(k)},w^{(k)},S)$
be a Heegaard diagram adapted to a Seifert surface $R$ of an
$l$-component link $L\sbs S^3$. Consider the Heegaard diagram
$\mc{H}_0=(\Sigma_{(g)},\alpha^{(g+k-1)},\beta^{(g+k-1)},w^{(k)})$
obtained by forgetting $S$ and the $z$ markings. The set of generators
$\mc{G}_{\mc{H}}$ is same as $\mc{G}_{\mc{H}_0}$, and they carry the
same absolute Maslov grading. Given a $2$-chain $D$
generated by the elementary regions of $\mc{H}$, let
$n_z(D)=\sum_{i=0}^kn_{z_i}(D)$. The elements of $\mc{G}_{\mc{H}}$
carry another well-defined grading called the \emph{absolute Alexander
  grading} $A$, such that for any domain $D\in\mc{D}(x,y)$,
$A(x)-A(y)=n_z(D)-n_w(D)$.

\begin{prop}\label{prop:alex-grading-range}
If $x\in\mc{G}_{\mc{H}}$ is a generator in
$\mc{H}=(\Sigma_{(g)},\alpha^{(g+k-1)},\beta^{(g+k-1)},z^{(k)},w^{(k)},S)$,
then its absolute Alexander grading is given by $A(x)=(\text{number of
}x\text{-coordinates inside }S)-\frac{1}{2}(2k-l-\chi(R))$. In
particular, the Alexander grading satisfies:
$-\frac{1}{2}(2k-l-\chi(R))\leq A(x)\leq\frac{1}{2}(l-\chi(R))$. 
\end{prop}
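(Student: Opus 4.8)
The plan is to prove that the function $x\mapsto A(x)-f(x)$ is constant on $\mc G_{\mc H}$, where $f(x)$ denotes the number of coordinates of $x$ lying in $S$, and then to identify the constant using the normalization of the absolute Alexander grading. Since $\mc H$ is a diagram for $S^3$, any two generators $x,y$ are joined by some domain $D\in\mc D(x,y)$, so by the relation $A(x)-A(y)=n_z(D)-n_w(D)$ it suffices to show $n_z(D)-n_w(D)=f(x)-f(y)$ for every such $D$. Write $\del D=\del_\al D+\del_\be D$ for the splitting of the boundary $1$-chain along the $\al$- and $\be$-curves, normalized so that $\del(\del_\al D)=y-x$. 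The geometric facts we use are: $S\sbs\Si$ is an oriented subsurface, so $\del S$ is a null-homologous $1$-cycle in $\Si$; and on each component of $\del S$ the $z$- and $w$-markings alternate, with each oriented arc from a $z$ to the next $w$ disjoint from the $\al$-curves and each oriented arc from a $w$ to the next $z$ disjoint from the $\be$-curves. Travelling once around a boundary circle $\gamma\sbs\del S$ and forming the alternating sum of the local multiplicities $n_\bullet(D)$ at the successive markings, the contributions of the $z$-to-$w$ arcs vanish (they miss $\al$), and one is left with the algebraic intersection number of $\gamma$ with $\del_\al D$; summing over the components of $\del S$ yields $n_z(D)-n_w(D)=\pm(\del S\cdot\del_\al D)$. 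Finally, on each circle $\al_i$ the chain $\del_\al D$ restricts to a simple arc joining the two relevant coordinates plus an integer multiple of $[\al_i]$, and $\del S\cdot[\al_i]=0$ because $[\del S]=0$ in $H_1(\Si;\Z)$; hence the intersection number depends only on the endpoints and equals $\pm(f(x)-f(y))$. Matching signs (forced by the form of the asserted identity), $A(x)-f(x)$ is a constant $c=c(\mc H)$.

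To pin down $c$ we must invoke the actual normalization of $A$, not merely the relative relation above. Recall that $x$ determines a relative $\SpinC$ structure $\underline{\mf s}(x)$ on the link complement, and that $A(x)$ is, up to the fixed normalization of \cite{POZSzlinkinvariants}, one half the evaluation of $c_1(\underline{\mf s}(x))$ on the (capped-off) class of the Seifert surface $R$. In an adapted diagram $R$ is realized by the subsurface $S$ and the $2k$ markings lie on $\del S$, so the standard Chern-class formula of \cite[Section 3]{POZSz} expresses this evaluation as $2f(x)$ minus a correction term built from Euler measures and the local behaviour near the markings and the $\al\cap\be$ points; accounting for this correction against the data of the diagram gives exactly $2k-l-\chi(R)$, hence $A(x)=f(x)-\frac{1}{2}\big(2k-l-\chi(R)\big)$. (Equivalently, one can fix $c$ by comparing the Alexander-graded Euler characteristic of the complex with its standard expression in terms of $\Delta_L$.)

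The ``in particular'' is then immediate: by the defining condition of a diagram adapted to $R$, each generator has at most $k-\chi(R)$ coordinates in $S$, so $0\le f(x)\le k-\chi(R)$, whence
\[
-\tfrac{1}{2}\big(2k-l-\chi(R)\big)\;\le\;A(x)\;\le\;\big(k-\chi(R)\big)-\tfrac{1}{2}\big(2k-l-\chi(R)\big)=\tfrac{1}{2}\big(l-\chi(R)\big).
\]
I expect the second step to be the main obstacle: carefully matching the orientation- and Euler-measure corrections in the Chern-class computation with the quantity $2k-l-\chi(R)$ for a general number $k$ of markings. The constancy of $A-f$ in the first step, by contrast, is a short and purely homological argument.
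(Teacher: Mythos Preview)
Your relative step---showing that $A(x)-f(x)$ is constant by computing $n_z(D)-n_w(D)$ as the algebraic intersection of $\partial S$ with $\partial_\alpha D$, and then reducing to $f(x)-f(y)$ using $[\partial S]=0$ in $H_1(\Sigma)$---is essentially the same argument the paper gives, only phrased in slightly different language. The paper states it as $c_1(\mathfrak{s}(x))-c_1(\mathfrak{s}(y))=2\mathrm{PD}(\partial D)$ together with the observation that the algebraic intersection of $\partial D$ with $\partial S$ equals $f(x)-f(y)$; unwinding this is exactly your computation.

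Where your proposal differs from the paper, and where it is genuinely incomplete, is the normalization. You correctly flag this as the main obstacle, but your sketch (``the standard Chern-class formula \ldots\ expresses this evaluation as $2f(x)$ minus a correction term \ldots\ accounting for this correction \ldots\ gives exactly $2k-l-\chi(R)$'') is not an argument: the Euler-measure bookkeeping for a subsurface with $2k$ sutures on $\partial S$ in a multi-pointed diagram is nontrivial, and your alternative of matching against $\Delta_L$ requires identifying the symmetry axis of a multivariable Alexander polynomial, which is also not immediate. The paper sidesteps all of this by invoking Juh\'asz's sutured formula \cite[Lemma~3.9]{AJ}: for an \emph{outer} generator (one with $f(x)=0$) one has $\langle c_1(\mathfrak{s}(x)),[R,\partial R]\rangle=c(S)=\chi(S)+I(S)-r(S)$, and in the present situation $I(S)=-k$ and $r(S)=k-l$, so $c(S)=\chi(R)+l-2k$ on the nose. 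This one-line computation is what you are missing; without it (or an equivalent explicit Euler-measure calculation) your proof does not close.

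The ``in particular'' paragraph is fine and matches the paper.
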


\begin{proof} 
 The Alexander grading
  of a generator $x\in\mc{G}_{\mc{H}}$ is given by
  $\frac{1}{2}\langle c_1(s(x)),[R,\partial R]\rangle$. If the
  generator has no coordinate inside $S$ (called outer in \cite{AJ}),
  we can evaluate it as $\frac{1}{2}c(S)$ where $c(S)$ is the quantity
  defined in \cite[Section~3]{AJ}. Then using \cite[Lemma~3.9]{AJ}, we
  see that
  \[
    c(S)=\chi(S)+I(S)-r(S)=\chi(S)-k-(k-l)=\chi(R)+l-2k,
  \]
  where $I(S)$ equals minus half the number of sutures (which are
  basepoints in our setting) and $r(S)$ is $0$ if there is exactly one
  pair of sutures on each boundary and decreases by one for each
  additional pair of sutures.

  For generators which are not disjoint from $S$, we only need to
  notice that $c_1(s(x))-c_1(s(y))=2\mathrm{PD}(\alpha)$, where
  $\alpha=\del D$ for any domain $D\in\mc{D}(x,y)$. It is not hard to
  see that the algebraic intersection number of $\alpha$ with $\del S$
  equals the number of $x$-coordinates inside $S$ minus the number of
  $y$-coordinates inside $S$, and therefore,
  \[
    A(x)=\frac{1}{2}(\chi(R)+l-2k)+(\text{number of
    }x\text{-coordinates inside }S).
  \] 
  This proves the lefthand side of
  the inequality. For the righthand side, we only need to use the
  following fact
  \[
    (\text{number of }x\text{-coordinates inside }S)\le(\text{number
      of }\alpha\text{ circles intersecting }S)= k-\chi(R).\qedhere
  \]
\end{proof}

In view of the above proposition, we make the following definitions.
\begin{defn}
Given a compact surface $R$ (possibly disconnected), define its {\em index} to be
$\frak{i}(R)=\frac{1}{2}(|\partial R|-\chi(R))$.  Call a Seifert surface $R$ for a
link $L$ {\em minimal} if it minimizes the index, and define the {\em genus} of
the link, $g(L)$, to be this minimal index.
\end{defn}

It is easy to see that the chain complex
$\tCF_{\mc{H}}=\tCF_{\mc{H}_0}$ is filtered by the Alexander
grading. Let the filtration level
$\mc{F}_{\mc{H}}(m)\subseteq \tCF_{\mc{H}}$ denote the subcomplex
generated by the generators with Alexander grading $m$ or less. We
call such an $(M,A)$-bigraded complex, where the differential
decreases $M$ by one and does not increase $A$, to be an
$M$-graded-$A$-filtered complex.

\begin{thm}\cite{POZSzknotinvariants,JR,POZSzlinkinvariants}\label{thm:link-invariance}
  To an $l$-component link $L\subset S^3$, one can associate (the
  filtered chain homotopy type of) an $M$-graded-$A$-filtered complex
  $\CFK(L)$ such that the chain complex $\tCF_{\mc{H}}$, coming from
  any Heegaard diagram
  $\mc{H}=(\Sigma_{(g)},\al^{(g+k-1)},\be^{(g+k-1)},z^{(k)},w^{(k)},S)$
  adapted to any Seifert surface $R$ for $L$, is filtered
  chain homotopy equivalent to
  $\CFK(L)\otimes^{k-l}(\F_2\oplus\F_2[-1,-1])$, where $[i,j]$ denotes
  the $(M,A)$ bi-grading shift by $(i,j)$.
\end{thm}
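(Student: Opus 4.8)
The plan is to establish that $\tCF_{\mc{H}}$, as an $M$-graded-$A$-filtered complex, is well-defined up to filtered chain homotopy equivalence after factoring out a standard stabilization term, by running through the usual Heegaard moves that connect any two diagrams adapted to (possibly different) Seifert surfaces for the same link. First I would observe that the underlying group and both gradings are exactly as in the knot Floer setup of \cite{POZSzknotinvariants,JR,POZSzlinkinvariants}: the generators of $\tCF_{\mc{H}}$ coincide with those of $\mc{H}_0$, the Maslov grading $M$ is computed from $\mu(D)-2n_w(D)$, and the Alexander grading $A$ is computed from $n_z(D)-n_w(D)$, so the filtration by $\mc{F}_{\mc{H}}(m)$ is genuinely a subcomplex of $\tCF_{\mc{H}}$. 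The content is that the filtered homotopy type depends only on $L$.

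Next I would appeal to the standard fact that any two admissible Heegaard diagrams adapted to Seifert surfaces for $L$ (in the doubly-pointed, $S$-decorated sense used here) are connected by a finite sequence of moves: isotopies of the $\al$ and $\be$ curves and of $S$ in the complement of $z\cup w$; handleslides among the $\al$ curves and among the $\be$ curves (again in the complement of the basepoints); (de)stabilizations of the Heegaard surface; and index stabilizations that add a canceling pair of basepoints together with a small $\al$ and $\be$ circle. One must check that each of these moves either preserves the $M$-graded-$A$-filtered homotopy type on the nose or changes it by tensoring with the acyclic two-step complex $\F_2\oplus\F_2[-1,-1]$. Isotopies and handleslides disjoint from $z\cup w$ induce filtered chain homotopy equivalences by the usual continuation-map arguments of \cite{POZSz} transported to the tilde setting (see \cite[Section 6.1]{POZSzlinkinvariants}), since the maps are counted by domains and respect both $n_w$ and $n_z$; finger moves of the type in \ref{item:finger-moves-heegaard-diagram}, which slide curves across $z$ markings, change the diagram but not the filtered homotopy type because they can be realized by an isotopy of $S$. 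Heegaard stabilization adds one $\al$ and one $\be$ curve meeting in a single point with no new basepoint, hence contributes a tensor factor of $\F_2$ placed in bi-grading $(0,0)$ — i.e. it does not change the filtered homotopy type; index stabilization adds a canceling $z$-$w$ pair bounding a bigon, which contributes precisely the $\F_2\oplus\F_2[-1,-1]$ factor, accounting for the exponent $k-l$ (since a diagram adapted to $R$ has $k\ge l$ basepoint pairs, with $k-l$ of them ``extra''). Assembling these, $\tCF_{\mc{H}}\simeq \CFK(L)\otimes^{k-l}(\F_2\oplus\F_2[-1,-1])$ where $\CFK(L)$ is defined to be the filtered homotopy type of any diagram with $k=l$.

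I expect the main obstacle to be the invariance under the path of almost complex structures together with careful bookkeeping of the two gradings under each move — in particular verifying that the continuation maps associated to isotopies and handleslides are filtered (do not increase $A$) and grading-preserving (preserve $M$), and that admissibility can be maintained throughout the sequence of moves so that all the relevant moduli spaces are compact and the counts $c(D)$ are finite. The tilde flavor helps here because only empty domains contribute, which rigidifies the combinatorics, but one still must confirm that the chain homotopies produced are themselves filtered. A secondary point requiring care is that the two summands $R_1, R_2$ in later applications need not be connected, so the algorithm and the invariance statement must be phrased for disconnected Seifert surfaces; this is already built into the hypotheses and Proposition \ref{prop:alex-grading-range}, so no additional argument is needed beyond noting that the discussion above never used connectedness of $R$.
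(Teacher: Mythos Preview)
The paper does not supply a proof of Theorem~\ref{thm:link-invariance}; it is quoted as a known result from \cite{POZSzknotinvariants,JR,POZSzlinkinvariants}, and the authors simply state it and move on. So there is no ``paper's own proof'' to compare against.

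That said, your outline is a faithful sketch of the standard argument from those references: reduce to a sequence of Heegaard moves (isotopies, handleslides, stabilizations, and index $0/3$ stabilizations adding a $z$-$w$ pair), check that the continuation/triangle maps associated to each move are filtered and grading-preserving, and identify the index stabilization as tensoring with $\F_2\oplus\F_2[-1,-1]$. One small correction: you describe the extra $\F_2\oplus\F_2[-1,-1]$ factor as an ``acyclic two-step complex,'' but it is not acyclic---it has trivial differential and contributes two generators to homology, which is precisely what Theorem~\ref{thm:main-invariance} records. Otherwise your sketch is in line with the cited literature, and there is nothing further to reconcile with the present paper.
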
 

It is clear from Proposition~\ref{prop:alex-grading-range} and
Theorem~\ref{thm:link-invariance} that the subcomplex of $\CFK(L)$ in
Alexander grading less than $-\frak{i}(R)$ is filtered
chain homotopy equivalent to zero. Let
$\hatCFK(L,-\frak{i}(R))$ denote the subcomplex of
$\CFK(L)$ in Alexander grading less than or equal to
$-\frak{i}(R)$. If $R$ is minimal, then its homology,
$\hatHFK(L,-g(L))$, is
non-zero~\cite{POZSzgenusbounds,YNlinkgenusbounds} carrying a single
grading coming from the Maslov grading, and is
called the \emph{extremal knot Floer homology}. 

Instead of studying the full filtration on $\CFK(L)$, we will 
restrict our attention to the  two-step filtration 
$\hatCFK(L,-\frak{i}(R)) \subset \CFK(L)$. Recall that a
\emph{two-step filtered complex} is simply a pair
$(S,C)$ where $C$ is a chain complex and $S\subset C$ is a subcomplex. A
filtered chain map $f$ from $(S,C)$ to $(S',C')$ is a chain map
$f\colon C\to C'$ so that $f(S)\subseteq S'$. A filtered chain map
$f$ from $(S,C)$ to $(S',C')$ is a \emph{quasi-isomorphism} if both
$f\colon C\to C'$ and $f|_S\colon S\to S'$ induce isomorphisms on
homology.  We will make use of the following corollary of
Theorem~\ref{thm:link-invariance}

\begin{cor}\label{cor:2-step-filtered}
  Let
  $\mc{H}=(\Sigma_{(g)},\al^{(g+k-1)},\be^{(g+k-1)},z^{(k)},w^{(k)},S)$
  be a Heegaard diagram adapted to a minimal Seifert surface $R$ for
  $L$. Then there is a quasi-isomorphism of pairs 
  \[
  (\mc{F}_{\mc{H}}(-\frak{i}(R)-k+l),\tCF_{\mc{H}})\cong
  (\hatCFK(L,-g(L))\otimes^{k-l}(\F_2[-1,-1]),\CFK(L)\otimes^{k-l}(\F_2\oplus\F_2[-1,-1])).
  \]
  In
  particular, the extremal knot Floer homology is isomorphic to
  the homology of
  $\mc{F}_{\mc{H}}(-\frak{i}(R)-k+l)[k-l]$. Moreover, the
  maps on homologies, $\hatHFK(L,-g(L))\to H_*(\CFK(L))$
  and
  $H_*(\mc{F}_{\mc{H}}(-\frak{i}(R)-k+l))\rightarrow
  \tHF_{\mc{H}}$ have the same rank.
\end{cor}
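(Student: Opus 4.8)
The plan is to deduce everything formally from Theorem~\ref{thm:link-invariance} together with the grading bookkeeping already established in Proposition~\ref{prop:alex-grading-range}. First I would recall that Theorem~\ref{thm:link-invariance} furnishes a filtered chain homotopy equivalence $\tCF_{\mc{H}}\simeq\CFK(L)\otimes^{k-l}(\F_2\oplus\F_2[-1,-1])$; the point is to track what this equivalence does to the two-step filtration determined by the Alexander grading. On the $\CFK(L)$ side, the relevant subcomplex is $\hatCFK(L,-g(L))$, which sits in the single bottom-most Alexander grading $-\frak{i}(R)=-g(L)$ (using minimality of $R$, so that $\hatHFK(L,-g(L))\neq 0$ and nothing lives below it by the remark following Theorem~\ref{thm:link-invariance}). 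Tensoring with $(\F_2\oplus\F_2[-1,-1])$ shifts Alexander gradings down by $1$ for each $\F_2[-1,-1]$ factor, so the copy of $\hatCFK(L,-g(L))$ inside the tensor product that occupies the lowest Alexander grading $-g(L)-(k-l)$ is precisely $\hatCFK(L,-g(L))\otimes^{k-l}(\F_2[-1,-1])$, with the $(M,A)$ shift by $(-(k-l),-(k-l))$ recorded in the statement.

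Next I would identify the matching subcomplex on the Heegaard side. By Proposition~\ref{prop:alex-grading-range} the Alexander gradings occurring in $\tCF_{\mc{H}}$ range over $[-\frak{i}(R)-k+l,\,\frak{i}(R)]$ (in the minimal case; more precisely the lower bound is $-\frac12(2k-l-\chi(R))=-\frak{i}(R)-(k-l)$), and the filtration level $\mc{F}_{\mc{H}}(m)$ is the subcomplex generated by generators of Alexander grading $\le m$. Hence $\mc{F}_{\mc{H}}(-\frak{i}(R)-k+l)$ is exactly the bottom Alexander filtration level of $\tCF_{\mc{H}}$. Since a filtered chain homotopy equivalence carries the bottom filtration level of the source to that of the target (filtered maps are, by definition, compatible with the filtration, and a filtered homotopy equivalence restricts to a homotopy equivalence on each associated filtered piece), the equivalence of Theorem~\ref{thm:link-invariance} restricts to a chain homotopy equivalence $\mc{F}_{\mc{H}}(-\frak{i}(R)-k+l)\simeq \hatCFK(L,-g(L))\otimes^{k-l}(\F_2[-1,-1])$. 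Packaging the total complex equivalence and this restricted one together is precisely the assertion that we have a quasi-isomorphism of pairs, which gives the displayed formula; taking homology of the subcomplex and applying the shift $[k-l]$ recovers $\hatHFK(L,-g(L))$, which is the ``in particular'' clause.

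For the final sentence, the two inclusion-induced maps $\hatHFK(L,-g(L))\to H_*(\CFK(L))$ and $H_*(\mc{F}_{\mc{H}}(-\frak{i}(R)-k+l))\to \tHF_{\mc{H}}$ fit into a commuting square whose vertical arrows are the isomorphisms just produced (on the subcomplex and on the total complex respectively), because a quasi-isomorphism of pairs by definition induces a commuting square of long exact sequences, hence in particular a commuting square relating the two inclusion maps. Two maps related by isomorphisms on both source and target have equal rank, so the ranks agree. I expect the only genuinely delicate point to be the justification that a filtered chain homotopy equivalence restricts to an honest homotopy equivalence on the bottom filtration level—this is standard (one checks the filtered homotopies also preserve the filtration, so they restrict), but it is where the argument has actual content rather than bookkeeping; everything else is grading arithmetic already done in Proposition~\ref{prop:alex-grading-range} and the observation that the relevant subcomplexes are the extreme filtration pieces on each side.
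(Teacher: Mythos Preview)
Your approach is correct and is precisely what the paper intends: the corollary is stated there without proof, as a formal consequence of Theorem~\ref{thm:link-invariance} together with the Alexander grading bounds from Proposition~\ref{prop:alex-grading-range}, and you have spelled out exactly that deduction.

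One small point in your final paragraph deserves tightening. The commuting square you invoke does not have isomorphisms on \emph{both} vertical sides in the way you describe: the quasi-isomorphism of pairs identifies $H_*(\mc{F}_{\mc{H}}(-\frak{i}(R)-k+l))$ with $\hatHFK(L,-g(L))\otimes^{k-l}\F_2[-1]$ and $\tHF_{\mc{H}}$ with $H_*(\CFK(L))\otimes^{k-l}(\F_2\oplus\F_2[-1])$, so the target side is larger than $H_*(\CFK(L))$ by a factor of $2^{k-l}$. What the square actually shows is that the inclusion-induced map on the Heegaard side has the same rank as the map
\[
\big(\hatHFK(L,-g(L))\to H_*(\CFK(L))\big)\ \otimes\ \big(\F_2[-1]^{\otimes(k-l)}\hookrightarrow(\F_2\oplus\F_2[-1])^{\otimes(k-l)}\big),
\]
and you then need the (easy, over a field) observation that tensoring with the second factor---an injection of rank one---does not change the rank. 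With that sentence added, the argument is complete.
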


We conclude this section by describing how the extremal knot Floer
homology is related to the $\tau$-invariant. Ozav\'ath-Szab\'o
originally defined the $\tau$-invariant for knots in $S^3$; there are
a number of generalizations of this invariant to links, and we will concentrate on $\taubot$ and $\tautop$ which, by \cite[Proposition~5.16]{HeddenRaoux} correspond to the smallest and largest of all the possible $\tau$ invariants for links (For a knot $K$,
$\tau(K)=\taubot(K)=\tautop(K)$.) For now, we only need the following
properties of these invariants.
\begin{prop}\label{prop:tau-std-prop}
If $m(L)$ denotes the mirror of $L$, then $\taubot(m(L))=-\tautop(L)$.
The invariant $\taubot$ satisfies $-g(L)\leq\taubot(L)\leq g(L)$ with
$\taubot(L)=-g(L)$ if and only if the map $\hatHFK(L,-g(L))\to
H_*(\CFK(L))$ is non-zero.  
\end{prop}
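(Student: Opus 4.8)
The plan is to deduce all three assertions formally from the definitions of $\taubot$ and $\tautop$ together with the structural facts already in hand, chiefly Theorem~\ref{thm:link-invariance}. Since $\CFK(L)$ is (the filtered chain homotopy type of) the collapsed link Floer complex, its homology is $H_*(\CFK(L))\cong\otimes^{l-1}(\F_2\oplus\F_2[-1])$ and, in terms of the inclusion-induced maps $\hat\iota_s\colon H_*(\hatCFK(L,s))\to H_*(\CFK(L))$, one has
\[
\taubot(L)=\min\{s:\hat\iota_s\neq 0\},\qquad\tautop(L)=\min\{s:\hat\iota_s\text{ is surjective}\}
\]
(this is the content of \cite[Proposition~5.16]{HeddenRaoux} once $\CFK(L)$ is identified with the collapsed link Floer complex, and these are invariants of $L$ by Theorem~\ref{thm:link-invariance}). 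These minima exist: the Alexander grading is integer valued, and for $s\le s'$ the map $\hat\iota_s$ factors through $\hat\iota_{s'}$, so both sets on the right are upward closed; moreover $\hat\iota_{g(L)}=\mathrm{id}$, because nothing lies in Alexander grading above $g(L)$ by Proposition~\ref{prop:alex-grading-range}, while $\hat\iota_s=0$ for $s<-g(L)$, because $\hatCFK(L,s)$ is then filtered chain homotopy equivalent to $0$ (the remark following Theorem~\ref{thm:link-invariance}). Since surjectivity implies nontriviality, $\taubot(L)\le\tautop(L)$, consistent with these being the smallest and largest link $\tau$-invariants.

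\textbf{Bounds and the final equivalence.} From $\hat\iota_s=0$ for $s<-g(L)$ and $\hat\iota_{g(L)}=\mathrm{id}\neq0$ we get $-g(L)\le\taubot(L)\le g(L)$. For the last claim, $H_*(\hatCFK(L,-g(L)))=\hatHFK(L,-g(L))$ by definition, and $\hat\iota_{-g(L)}$ is precisely the map appearing in the statement; since $\hat\iota_s=0$ for every $s<-g(L)$, we have $\taubot(L)=-g(L)$ if and only if $\hat\iota_{-g(L)}\neq 0$, i.e.\ if and only if $\hatHFK(L,-g(L))\to H_*(\CFK(L))$ is nonzero.

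\textbf{The mirror formula.} Here I would invoke the standard duality of knot and link Floer homology under orientation reversal of $S^3$: $\CFK(m(L))$ is filtered chain homotopy equivalent to the $\F_2$-linear dual complex $\CFK(L)^\vee$ (with the transpose differential) with the Alexander filtration negated and the Maslov grading reversed, up to an overall shift \cite{POZSzknotinvariants,POZSzlinkinvariants}. Concretely this identifies $\hatCFK(m(L),s)$ with $\bigl(\CFK(L)/\hatCFK(L,-s-1)\bigr)^\vee$, so, taking homology over the field $\F_2$, the map $\hat\iota_s^{m(L)}$ is the transpose of the quotient-induced map $q_*\colon H_*(\CFK(L))\to H_*\bigl(\CFK(L)/\hatCFK(L,-s-1)\bigr)$. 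By the long exact sequence of the pair, $\ker q_*=\operatorname{im}\hat\iota^{L}_{-s-1}$, so $q_*$, and hence $\hat\iota_s^{m(L)}$ (which has the same rank), is nonzero exactly when $\hat\iota^{L}_{-s-1}$ is not surjective. Since $\{t:\hat\iota^{L}_t\text{ surjective}\}$ is upward closed with minimum $\tautop(L)$, this occurs precisely for $-s-1\le\tautop(L)-1$, i.e.\ $s\ge-\tautop(L)$; therefore $\taubot(m(L))=\min\{s:\hat\iota_s^{m(L)}\neq 0\}=-\tautop(L)$.

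\textbf{Expected difficulty.} I do not anticipate a substantive obstacle: once the reformulations of $\taubot$ and $\tautop$ above are in place, the proposition is bookkeeping. The step demanding the most care is the chain-level duality used for the mirror formula---in particular the shift by $-1$ relating the Alexander filtration level of $\CFK(m(L))$ to that of $\CFK(L)^\vee$, and the accompanying Maslov shift---though this is a well-documented feature of knot and link Floer homology. If instead one wishes to start from the $HF^-$-based definitions of $\taubot$ and $\tautop$ in \cite{HeddenRaoux}, the bulk of the argument becomes their identification with the two $\min$-formulas above, which is \cite[Proposition~5.16]{HeddenRaoux}.
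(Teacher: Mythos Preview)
Your argument is correct, and it is more self-contained than the paper's, which simply cites \cite{HeddenRaoux} for each assertion: the mirror relation from \cite[Proposition~2.5]{HeddenRaoux}, the genus bound via the slice-genus bound \cite[Proposition~5.14]{HeddenRaoux}, and the final equivalence from the definition together with \cite[Proposition~5.16]{HeddenRaoux}. The main substantive difference is in the genus bound: the paper appeals to the (deeper) slice-genus inequality, while you use the elementary observation, internal to this paper via Proposition~\ref{prop:alex-grading-range} and Theorem~\ref{thm:link-invariance}, that the Alexander filtration on $\CFK(L)$ is exhausted at level $g(L)$ and acyclic below $-g(L)$; your route is shorter and avoids the four-dimensional input. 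For the mirror formula you unpack the chain-level duality explicitly rather than quoting \cite[Proposition~2.5]{HeddenRaoux}; your computation of the index shift (identifying $\hatCFK(m(L),s)$ with $(\CFK(L)/\hatCFK(L,-s-1))^\vee$ and then reading off $\taubot(m(L))=-\tautop(L)$ from the long exact sequence) is correct and makes transparent why the formula holds. Both approaches rest on the same reformulation of $\taubot$ and $\tautop$ in terms of the maps $\hat\iota_s$, which you correctly attribute to \cite[Proposition~5.16]{HeddenRaoux}.
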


\begin{proof}
 The relationship between $\taubot$ and $\tautop$ under mirroring follows from their definition, a  duality property satisfied by generalized $\tau$ invariants  \cite[Proposition~2.5]{HeddenRaoux}.  That $\taubot$ is bounded by the genus of $L$ follows from the fact that it is correspondingly bounded by the ``slice genus" \cite[Proposition~5.14]{HeddenRaoux}.  The final statement is a consequence of the definition of $\taubot$, and the monotonicity of the $\tau$ invariants for links established in \cite[Proposition~5.16]{HeddenRaoux}.  See \cite[Theorem~2 and Section~5.3]{HeddenRaoux} for more details.
\end{proof}

\begin{prop}
  $\mc{H}=(\Sigma_{(g)},\al^{(g+k-1)},\be^{(g+k-1)},z^{(k)},w^{(k)},S)$
  be a Heegaard diagram adapted to a minimal Seifert surface $R$ for a
  knot $L\sbs S^3$. Then $\taubot(L)=-g(L)$ 
  if and only if the map on homology
  $H_*(\mc{F}_{\mc{H}}(-g(L)-k+l))\rightarrow \tHF_{\mc{H}}$ induced
  from the inclusion
  $\mc{F}_{\mc{H}}(-g(L)-k+l)\hookrightarrow\tCF_{\mc{H}}$ is
  non-trivial.
  \end{prop}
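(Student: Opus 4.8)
The plan is to obtain the statement as a formal consequence of Proposition~\ref{prop:tau-std-prop} and Corollary~\ref{cor:2-step-filtered}; no new geometric input is needed. Since $L$ is a knot we have $l=1$, so $\taubot(L)=\tau(L)$, and because $R$ is minimal we have $\frak{i}(R)=g(L)$; in particular the filtration level $\mc{F}_{\mc{H}}(-g(L)-k+l)$ in the statement is precisely the level $\mc{F}_{\mc{H}}(-\frak{i}(R)-k+l)$ to which Corollary~\ref{cor:2-step-filtered} applies.

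First I would invoke Proposition~\ref{prop:tau-std-prop}: $\taubot(L)=-g(L)$ if and only if the map $\hatHFK(L,-g(L))\to H_*(\CFK(L))$ induced by the inclusion $\hatCFK(L,-g(L))\hookrightarrow\CFK(L)$ is non-zero, equivalently has positive rank (all homology groups here being $\F_2$-vector spaces). Next I would appeal to the last sentence of Corollary~\ref{cor:2-step-filtered}, which states that this map and the inclusion-induced map $H_*(\mc{F}_{\mc{H}}(-\frak{i}(R)-k+l))\to\tHF_{\mc{H}}$ have equal rank. Therefore one of the two maps is non-zero if and only if the other is, and combining this with the preceding equivalence proves the proposition.

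The single point deserving a word of explanation --- bookkeeping rather than a real obstacle --- is the equality of ranks just used. The quasi-isomorphism of pairs produced by Corollary~\ref{cor:2-step-filtered} intertwines the inclusion $\mc{F}_{\mc{H}}(-\frak{i}(R)-k+l)\hookrightarrow\tCF_{\mc{H}}$ with $f\otimes\iota^{\otimes(k-l)}$, where $f\colon\hatCFK(L,-g(L))\hookrightarrow\CFK(L)$ is the inclusion and $\iota\colon\F_2[-1,-1]\hookrightarrow\F_2\oplus\F_2[-1,-1]$ is the evident inclusion; since $\iota$ induces a rank-one map on homology, the induced map of the tensor product has rank $\operatorname{rank}(f_*)$. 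As this is already recorded in Corollary~\ref{cor:2-step-filtered}, the proof reduces to assembling the two cited results together with the identification $\frak{i}(R)=g(L)$.
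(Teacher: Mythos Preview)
Your proof is correct and follows exactly the same approach as the paper, which simply states that the result follows immediately from Proposition~\ref{prop:tau-std-prop} and Corollary~\ref{cor:2-step-filtered}. Your version merely spells out the bookkeeping (the identifications $l=1$, $\frak{i}(R)=g(L)$, and the rank comparison) that the paper leaves implicit.
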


\begin{proof}
This follows immediately from Proposition~\ref{prop:tau-std-prop} and Corollary~\ref{cor:2-step-filtered}.
\end{proof}

\subsection{Triangle maps}\label{sec:triangle-map}

We briefly introduce the definition of triangle maps in our
restricted setting, once again following the original definitions from
\cite{POZSz}. Let
$\mc{H}=(\Si_{(g)},\al^{(g+k-1)},\be^{(g+k-1)},\ga^{(g+k-1)},
w^{(k)})$ be a \emph{triple Heegaard diagram}, that is:
$\mc{H}_{\al\be}=(\Si,\al,\be,w)$ and
$\mc{H}_{\ga\be}=(\Si,\ga,\be,w)$ are Heegaard diagrams for $S^3$;
$\al_i$ is disjoint from $\ga_j$ for $i\neq j$; $\al_i$ is transverse
to $\ga_i$ and they intersect each other in exactly two points, none
of which lies on the $\be$ curves; furthermore, if $\alpha_j$ bounds a
disk $D_j$ in the $\alpha$-handlebody $U_\al$, then $\ga_i$ is
isotopic to $\al_i$ in
$\nbd_{U_\al}(\bigcup_{j\neq i}D_j)\cup(\Si\setminus w)$, that is,
$\ga_i$ can be isotoped to $\al_i$ after sliding it over some other
$\al$ circles in the complement of the $w$ markings. We will once
again assume that the triple Heegaard diagram is \emph{admissible}.

Orient $\al_i$ arbitrarily, and then orient $\ga_i$ in the same
direction, induced from the isotopy joining $\ga_i$ to $\al_i$. Let
$\theta_i$ be the positive intersection point in $\ga_i\cap\al_i$, and
let $\theta=(\theta_1,\ldots,\theta_{g+k-1})$. It is usually called
the \emph{top generator}. Note  that
$(\Sigma,\gamma,\alpha,w)$ is a Heegaard diagram for
$\#^{g}(S^1\times S^2)$ on which $(k-1)$ index 0/3 stabilizations have been performed, and $\theta$ is its unique generator of highest Maslov grading.

\emph{Elementary regions} of $\mc{H}$ are closures of the components
of $\Si\sm(\al\cup\be\cup\ga)$; 
a \emph{triangular domain} joining a generator
$x\in\mc{G}_{\mc{H}_{\al\be}}$ to a generator
$y\in\mc{G}_{\mc{H}_{\ga\be}}$ is a $2$-chain $D$ generated by the
elementary regions such that $\del(\del D\cap{\al})=\theta-x$ and
$\del(\del D\cap{\be})=x-y$; a triangular domain is said to be
\emph{positive} if all its coefficients are non-negative. Given a
triangular domain $D$, let $n_w(D)=\sum_i n_{w_i}(D)$, where
$n_{w_i}(D)$ is the coefficient of the elementary region containing
$w_i$, in the $2$-chain $D$. Let $\mc{T}(x,y)$ be the set of all
triangular domains joining $x\in\mc{G}_{\mc{H}_{\al\be}}$ to
$y\in\mc{G}_{\mc{H}_{\ga\be}}$, and let $\mc{T}_0(x,y)$ be the subset
consisting of the \emph{empty triangular domains}, that is, triangular
domains with $n_w=0$. The \emph{Maslov grading} $\mu(D)$ of any
triangular domain $D\in\mc{T}(x,y)$ satisfies
$\mu(D)-2n_w(D)=M(y)-M(x)$.

Choosing an appropriate family (parametrized by the 2-simplex) of almost complex structures on $\mathrm{Sym}^{g+k-1}(\Si)$, we can define a \emph{contribution
  function} $c$ from the set of all Maslov index zero triangular
domains, to $\F_2$.  Picking the family of almost complex structures to be integrable near a collection of hypersurfaces specified by basepoints in the elementary regions ensures that the contribution function has non-zero support only on  the positive
triangular domains. Then the following map is a graded
quasi-isomorphism from $\tCF_{\mc{H}_{\al\be}}$ to $\tCF_{\mc{H}_{\ga\be}}$.
\[
f(x)=\sum_{y\in\mc{G}_{\mc{H}_{\ga\be}}}
\sum_{\substack{D\in\mc{T}_0(x,y)\\\mu(D)=0}}c(D)y.
\]
We will reprove a special case of this fact in
Theorem \ref{thm:simplify}.

\subsection{Murasugi sum}\label{subsec:murasugi}

We are now prepared to discuss the Murasugi sum operation. Let
$S^2\sbs S^3$ be the standard $2$-sphere. We will mentally
`one-point-decompactify' the picture, and draw it as
$\R^2\sbs\R^3$. There are two components in $S^3\sm S^2$, the `inside'
$B_1$ and the `outside' $B_2$, such that $S^2$ is oriented as $\del
B_1$. Let $A_1A_2\dots A_{2n}$ be a $2n$-gon lying on $S^2$. For
$i\in\{1,2\}$, let $R_i$ be a Seifert surface for an
$l_i$-component link $L_i\sbs\ol{B_i}$, such that: $R_i\cap S^2$ is
the $A_1A_2\dots A_{2n}$ with the same orientation; $L_1\cap S^2$ is
the union of the oriented segments $A_1A_2$, $A_3A_4$, \dots,
$A_{2n-1}A_{2n}$; and $L_2\cap S^2$ is the union of the oriented
segments $A_2A_3$, $A_4A_5$, \dots, $A_{2n}A_1$. Then the \emph{Murasugi
  sum} $L=L_1*L_2$ is the link $\ol{(L_1\cup L_2)\sm S^2}$, and it
bounds the Seifert surface $R_1*R_2=R_1\cup R_2$. The special cases when $n=1$ is just the connected sum and $n=2$ is a plumbing. The case $n=2$ is
illustrated in Figure \ref{fig:bothlinks}.

\begin{figure}
\centering
\includegraphics{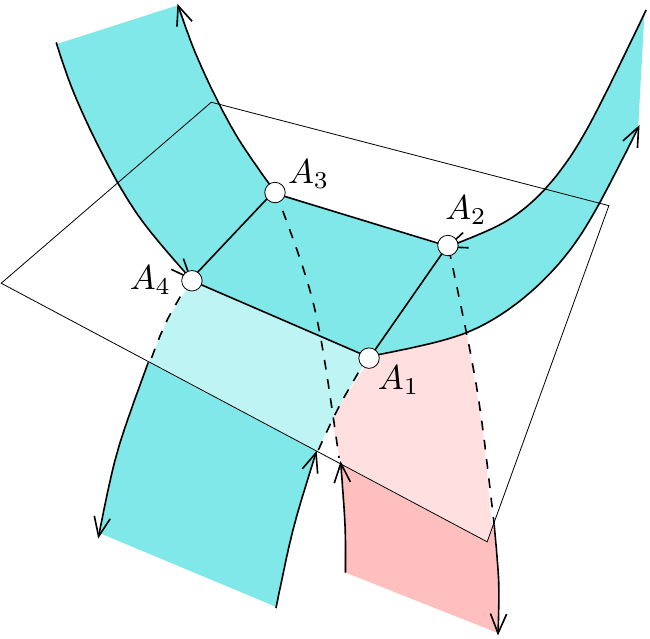}
\caption{\textbf{The Murasugi sum operation.} The link $L$ is obtained
  by plumbing the link $L_1$ below the plane with the link $L_2$ above
  the plane along the rectangle $A_1A_2A_3A_4$.}\label{fig:bothlinks}
\end{figure}

We need the following quantities, $\delta_1,\delta_2,\delta$, which
will simplify certain expressions later on.  Define $\delta_1$
(respectively, $\delta_2,\delta$) to be $n$ minus the number of
components of $L_1$ (respectively, $L_2$, $L$) that intersect the
$2n$-gon. Note, $(l+\delta-n)=(l_1+\delta_1-n)+(l_2+\delta_2-n)$.

We will now describe how to draw Heegaard diagrams adapted to $R_1$
and $R_2$, and how they can be combined to form a Heegaard diagram for
$R_1*R_2$. The procedures closely follow the outline from
Section~\ref{subsec:intro} with a few differences. The most important
feature of the following construction is that the roles of $\al$ and
$\be$ are reversed while constructing the Heegaard diagrams adapted to
$R_1$ and $R_2$.

\begin{figure}
\centering
\includegraphics{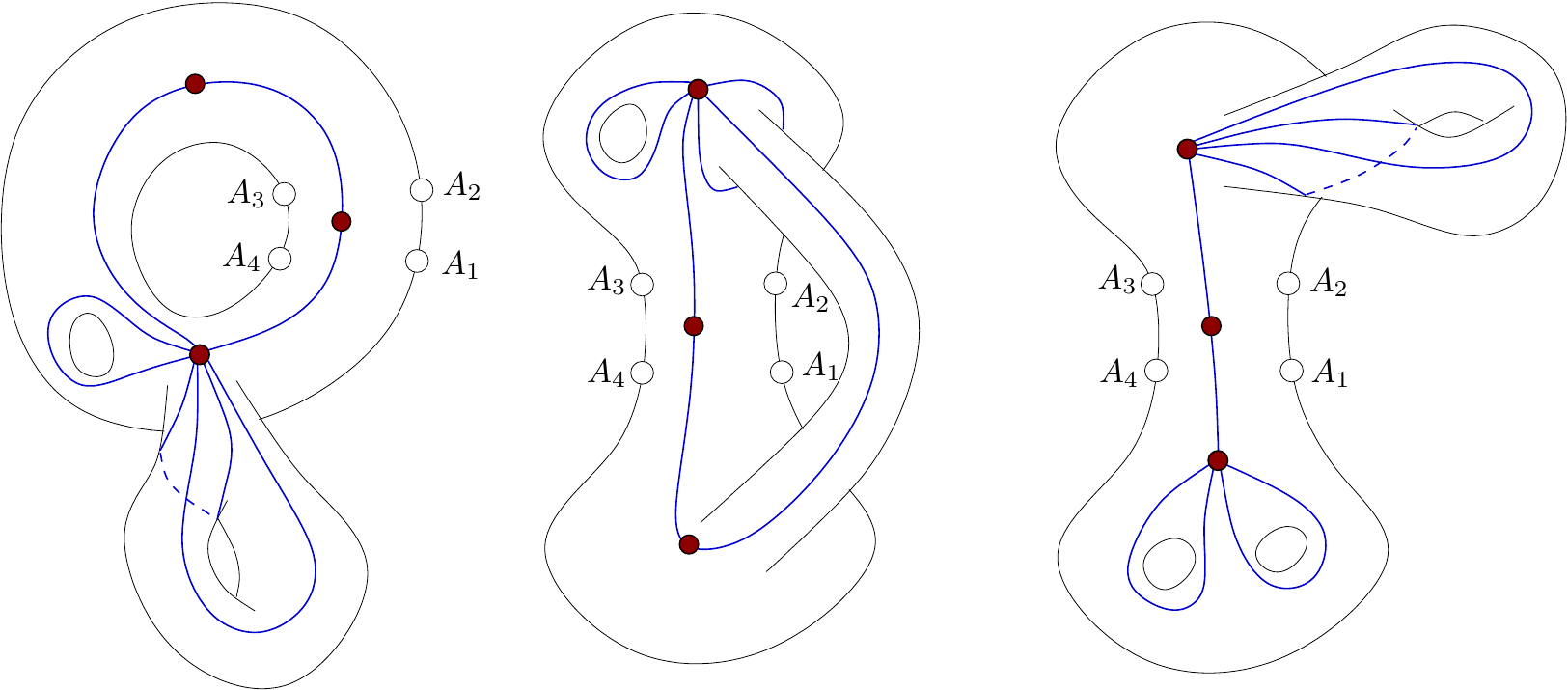}
\caption{\textbf{Embedding a graph $G$ in the Seifert surface.} In
  each case, the Seifert surface deform retracts to a neighborhood of
  $G$ that contains $A_1A_2A_3A_4$.}\label{fig:caseanalysis}
\end{figure}

\begin{enumerate}[leftmargin=*,label=(M-\arabic*),ref=M-\arabic*]
\item We first embed a graph $G_i$ in $R_i$ so that $R_i$ deform
  retracts to $G_i$, and $G_i$ intersects the $2n$-gon in a single
  vertex with exactly $n$ edges going out to $n$ of its edges. This is
  easy to ensure, see Figure~\ref{fig:caseanalysis}.
\item Then consider the handlebody $\ol{B_{3-i}\cup
    \nbd_{B_i}(G_i\cap B_i)}$. Its complement need not be a handlebody, so we
  add a few tunnels, none intersecting $S^2$, to complete this to a
  Heegaard decomposition of $S^3$. Let $\Si_i$ be the resulting
  Heegaard surface, oriented so that its orientation agrees with the
  orientation of $S^2=\del B_1$ on $S^2\cap \Si_i$. Let $U_{\al,i}$
  and $U_{\be,i}$ be the components of $S^3\sm\Si_i$, so that $\Si_i$
  is oriented as the boundary of $U_{\al,i}$. 
\item Construct a surface $S_i\sbs\Si_i$, such that $S_i$ is isotopic
  to $R_i$ and $S_i\cap S^2$ is the $2n$-gon  $A_1A_2\dots A_{2n}\cap\Si_i$.
\item Put $z$ markings at $A_1,A_3,\dots,A_{2n-1}$, and put $w$
  markings at $A_2,A_4,\dots,A_{2n}$. On every other component of
  $\del S_i$, put a $z$ marking and a $w$ marking right next to one
  another, such that a small arc in $(-1)^i\del S_i$ joins the $w$
  marking to the $z$ marking. Therefore, the total number of $z$ (or
  $w$) markings is $l_i+\delta_i$.
\item Then draw $\al$ circles and $\be$ circles on $\Si_i\sm(z\cup
  w)$, such that:
  \begin{enumerate}[leftmargin=*]
  \item The $\al$ circles and $\be$ circles are transverse to each
    other and to $\del S_i$.
  \item The $\al$ circles are pairwise disjoint and they span a
    half-dimensional subspace of $H_1(\Si_i)$; each component of
    $\Si_i\sm\al$ contains a $z$ marking and a $w$ marking.
  \item The $\be$ circles are pairwise disjoint and they span a
    half-dimensional subspace of $H_1(\Si_i)$; each component of
    $\Si_i\sm\be$ contains a $z$ marking and a $w$ marking.
  \item Each component of $\Si_1\sm\al$ (respectively, $\Si_2\sm\be$)
    has an oriented arc in $(-1)^i\del S_i$ joining the $w$ marking to
    the $z$ marking.
  \item Exactly $(l_i+\delta_i-\chi(R_i))$ $\be$ (respectively, $\al$)
    circles intersect $S_1$ (respectively, $S_2$).
  \item There are exactly $(n-1)$ $\al$ circles lying entirely inside
    the $2$-sphere $S^2$, and they encircle the edges $A_3A_4$,
    $A_4A_5$, \dots, $A_{2n-1}A_{2n}$. There are exactly $(n-1)$ $\be$
    circles lying entirely inside the $2$-sphere $S^2$, and they
    encircle the intervals $A_4A_5$, $A_6A_7$, \dots,
    $A_{2n}A_1$. Moreover, the consecutive $\alpha$ and $\beta$
    circles intesect each other at exactly two points.
  \item Other than the above circles, there are no $\be$
    (respectively, $\al$) circle of $\Si_1$ (respectively, $\Si_2$)
    that intersects $S^2$. There could be some $\al$ (respectively,
    $\be$) arcs of $\Si_1$ (respectively, $\Si_2$) that intersect
    $S^2$; in that case, their intersection with $S^2$ lies entirely
    inside $S_i\cap S^2$; and we can also ensure that there are at
    most $(n-1)$ of such $\al$ (respectively, $\be$) arcs.
  \end{enumerate}
\item We then do finger moves on the $\be$ (respectively, $\al$)
  circles on $\Si_1$ (respectively, $\Si_2$) to convert this to a
  Heegaard diagram $\mc{H}_1$ (respectively, $\mc{H}_2$) adapted to
  the Seifert surface $R_1$ (respectively, $R_2$). These final
  diagrams, in the case when $n=2$, are shown
  in Figure~\ref{fig:bothlinksdiagram}. In the case when $n=3$, the
  diagrams are also shown in the first two figures of the top row of
  Figure~\ref{fig:modified-diagrams} (where X denotes a handle going
  down into $B_1$ and O denotes a handle coming up into $B_2$).
\item\label{item:make-stuff-admissible} The first two figures in the third row of
  Figure~\ref{fig:modified-diagrams} represent slightly modified
  Heegaard diagrams $\mc{H}''_i$ that are obtained from $\mc{H}_i$ by
  deleting the $z$-markings, modifying the surface $S_i$, and
  performing small isotopies to reduce the number of intersections
  between $\al$ and $\be$ circles. We will assume that we have already
  performed some isotopies on the $\al$ and $\be$ circles on
  $\mc{H}_i$ away from $S^2$ so as to ensure that these modified
  diagrams $\mc{H}''_i$, and hence $\mc{H}_i$ itself, is already
  admissible.
\end{enumerate}

\begin{figure}
\centering
\includegraphics[scale=0.6]{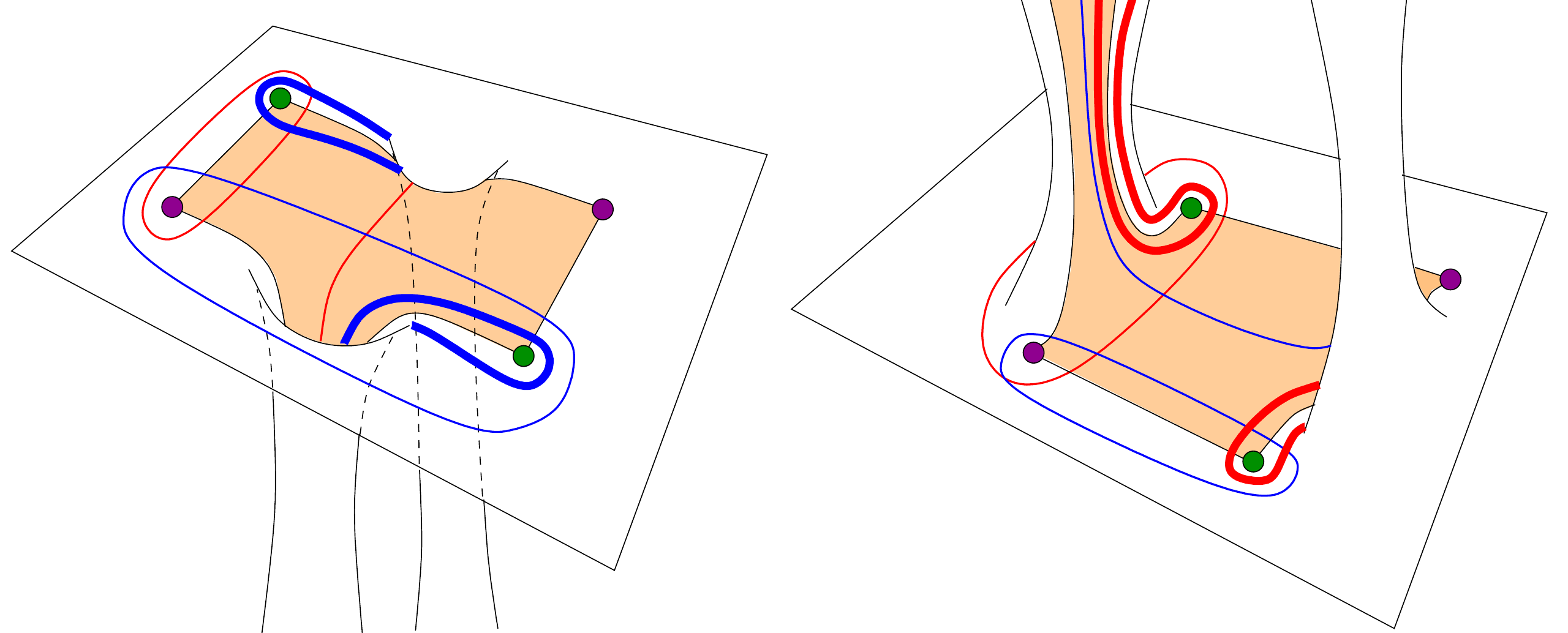}
\caption{\textbf{The Heegaard diagrams $\mc{H}_1$ and $\mc{H}_2$.} We
  continue to represent $w$ and $z$ markings by magenta and green
  dots, respectively. Once again, the thin lines are curves, and the
  thick lines are train tracks, representing some (possibly zero)
  curves running in parallel.}\label{fig:bothlinksdiagram}
\end{figure}

\begin{figure}
\centering
\includegraphics[scale=0.6]{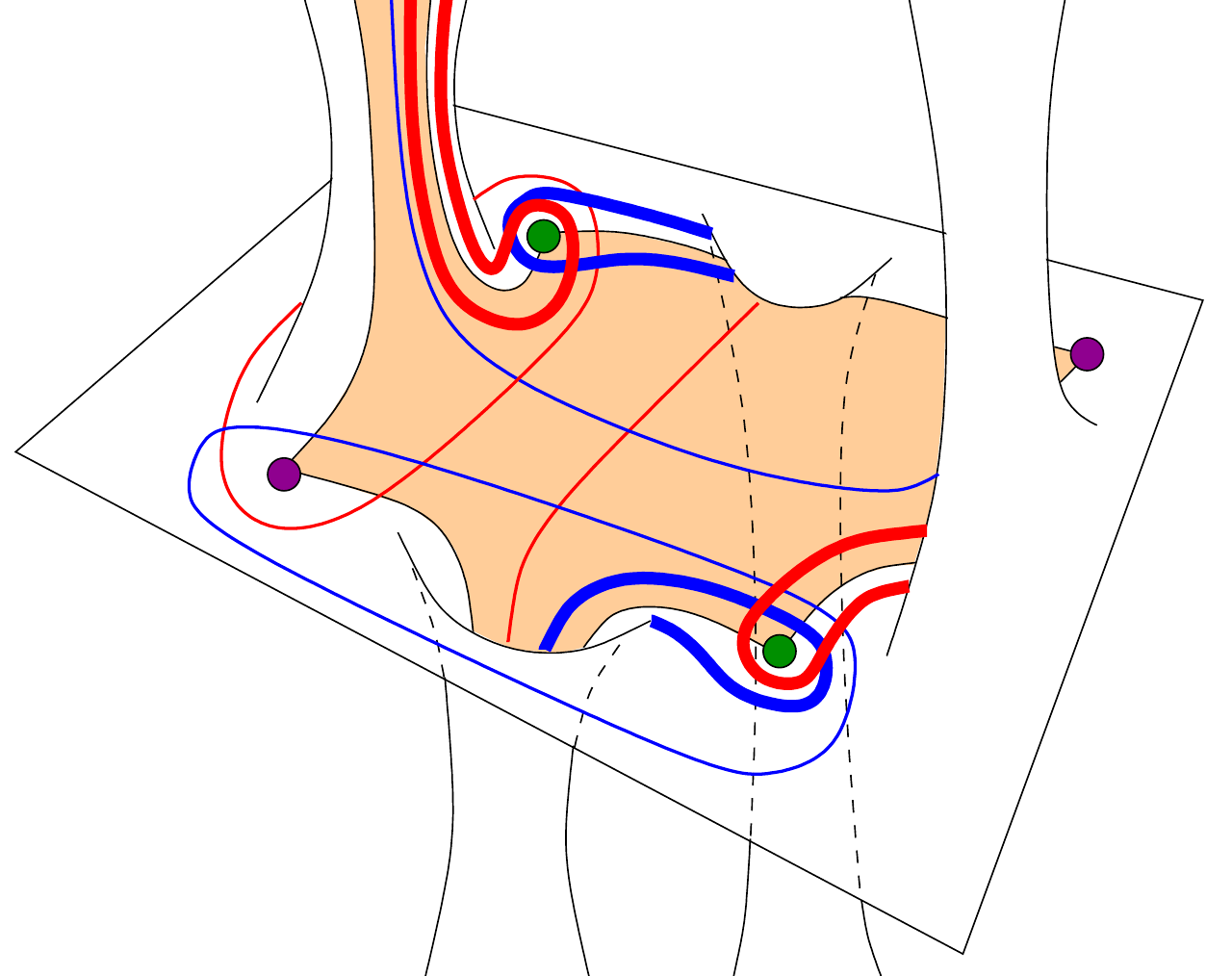}
\caption{\textbf{The Heegaard diagram $\mc{H}_1*\mc{H}_2$.} This diagram is obtained by combining the Heegaard diagrams $\mc{H}_1$ and $\mc{H}_2$ from Figure~\ref{fig:bothlinksdiagram}}\label{fig:combinedheegaard}
\end{figure}

We can now `combine' the Heegaard diagram $\mc{H}_1$ adapted to $R_1$
and $\mc{H}_2$ adapted to $R_2$ to form a Heegaard diagram
$\mc{H}_1*\mc{H}_2$ adapted to $R_1*R_2$. Recall that in $\mc{H}_1$,
the $\al$-handlebody $U_{\al,1}$ is obtained by tunneling out a few
one-handles from $\ol{B_1}$, and the $\be$-handlebody $U_{\be,1}$ is
obtained by attaching those corresponding one-handles to
$\ol{B}_2$. Similarly, in $\mc{H}_2$, the $\al$-handlebody $U_{\al,2}$
is obtained by attaching a few one-handles to $\ol{B_1}$, and the
$\be$-handlebody $U_{\be,2}$ is obtained by tunneling out those
corresponding one-handles from $\ol{B}_2$. In the `combined' Heegaard
diagram $\mc{H}_1*\mc{H}_2$, the $\al$-handlebody
$U_{\al,1}*U_{\al,2}$ is obtained from $\ol{B}_1$ by tunneling out all
the one-handles that were tunneled out in $U_{\al,1}$ and by attaching
all the one-handles that were attached in $U_{\al,2}$, and the
$\be$-handlebody $U_{\be,1}*U_{\be,2}$ is the closure of its
complement. The Heegaard surface $\Si_1*\Si_2$ is the oriented
boundary of $U_{\al,1}*U_{\al,2}$. There is a surface
$S_1*S_2\sbs\Si_1*\Si_2$, isotopic to $R_1*R_2$, which is obtained
from $S_1\sbs\Si_1$ and $S_2\sbs\Si_2$. The $w$ and $z$ basepoints,
and the $\al$ and $\be$ circles on $\Si_1*\Si_2$ are induced from the
corresponding objects in $\Si_1$ and $\Si_2$. The Heegaard diagram
$\mc{H}_1*\mc{H}_2$, in the case when $n=2$, looks like Figure
\ref{fig:combinedheegaard}, and in the case $n=3$, looks like the
third figure in the top row of
Figure~\ref{fig:modified-diagrams}. Since the modified diagrams
$\mc{H}''_1$ and $\mc{H}''_2$ are admissible, it follows
that the corresponding modified diagram $(\mc{H}_1*\mc{H}_2)''$ (third
figure in the third row of Figure~\ref{fig:modified-diagrams}), and
hence $\mc{H}_1*\mc{H}_2$ itself, is also admissible.

\section{Certain local isotopies}\label{sec:3}

Let $\mc{H}_{\al\be}=(\Si_{(g)},\al^{(g+k-1)},\be^{(g+k-1)},w^{(k)})$ be a Heegaard diagram for $S^3$, which possibly is non-admissible, and $S\sbs\Si$ be an open subsurface. Let $\mc{A}_{\mc{H}_{\al\be},S}\subseteq\mc{G}_{\mc{H}_{\al\be}}$ be the set of all the generators, none of whose coordinates lie inside $S$, and let $\mc{B}_{\mc{H}_{\al\be},S}=\mc{G}_{\mc{H}_{\al\be}}\sm \mc{A}_{\mc{H}_{\al\be},S}$ denote the rest of the generators. Let us assume that $S$ contains a disk $D$ that looks like the first part of Figure \ref{fig:isotopy} (with the train track convention): there are $b$ $\be$ arcs, all parallel to each other, with $b\geq 1$; there are $a_1+1+a_2$ $\al$ arcs, all parallel to each other, with $a_1,a_2\geq 0$, such that $a_2$ of them are disjoint from the $\be$ arcs, and each of the $a_1+1$ others, intersect each of the $b$ $\be$ arcs in exactly two points; there is a $w$ marking, such that the oriented boundary of the component of $D\sm(\al\cup\be)$ containing the $w$ marking, is an $\al$ arc followed by a $\be$ arc followed by an arc in $\del D$. Note that these $b$ $\be$ arcs need not belong to $b$ different $\be$ circles, and these $a_1+1+a_2$ $\al$ arcs need not belong to $a_1+1+a_2$ different $\al$ circles.

\begin{figure}
\centering
\includegraphics{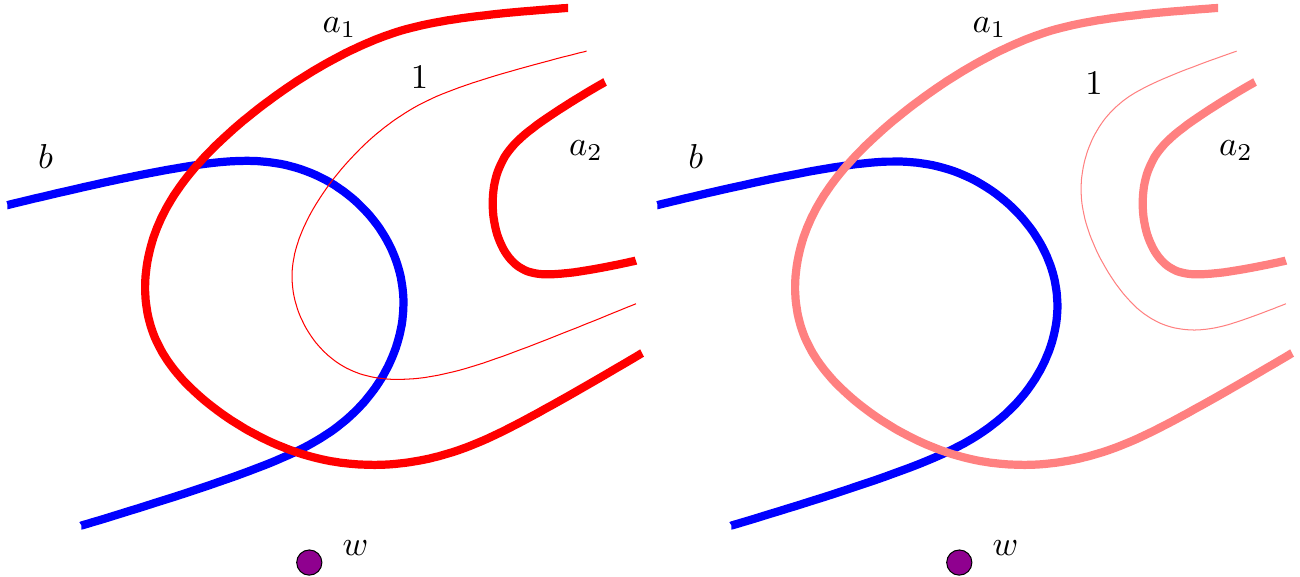}
\caption{\textbf{The disk $D$ in the Heegaard diagrams
    $\mc{H}_{\al\be}$ and $\mc{H}_{\ga\be}$.} The $\alpha$, $\beta$,
  and $\gamma$ arcs are represented by red, blue, and pink train
  tracks, respectively, with thin lines denoting curves. The number of
  arcs in each train track is also shown.}\label{fig:isotopy}
\end{figure}

Let $\mc{H}_{\ga\be}=(\Si,\ga,\be,w)$ be the Heegaard diagram (also possibly non-admissible) obtained from $\mc{H}$ after the local isotopy as shown in Figure \ref{fig:isotopy}. The surface $\Si$, the $\be$ circles, the $w$ markings, and the subsurface $S$ are unchanged. The $\al$ circles are replaced by the $\ga$ circles, which for the most part, are small perturbations of the corresponding $\al$ circles, except for one of the arcs in the disk $D$. There are $a_1+1+a_2$ $\ga$ arcs in $D\sbs S\sbs \Si$ in $\mc{H}_{\ga\be}$, of which $1+a_2$ of them are disjoint from the $\be$ arcs, and each of the remaining $a_1$ of them intersect each of the $b$ $\be$ arcs in exactly two points. This is shown in the second part of Figure \ref{fig:isotopy}, with the $\ga$ train tracks and arcs being denoted by thick and thin pink lines respectively. Once again, let $\mc{A}_{\mc{H}_{\ga\be},S}\sbs\mc{G}_{\mc{H}_{\ga\be}}$ be the set of all the generators, none of whose coordinates lie inside $S$, and let $\mc{B}_{\mc{H}_{\ga\be},S}=\mc{G}_{\mc{H}_{\ga\be}}\sm \mc{A}_{\mc{H}_{\ga\be},S}$ denote the rest of the generators. There is an obvious bijection $\mc{A}_{\mc{H}_{\al\be},S}\stackrel{\cong}{\longrightarrow} \mc{A}_{\mc{H}_{\ga\be},S}$, and we will always implicitly identify them by this bijection; there is an obvious injection $\mc{B}_{\mc{H}_{\ga\be},S}\hookrightarrow \mc{B}_{\mc{H}_{\al\be},S}$, and we will always implicitly treat $\mc{B}_{\mc{H}_{\ga\be},S}$ as a subset of $\mc{B}_{\mc{H}_{\al\be},S}$ by this injection.

\begin{prop}\label{prop:aba'b'}
If there are no empty positive domains from $\mc{A}_{\mc{H}_{\al\be},S}$ to
$\mc{B}_{\mc{H}_{\al\be},S}$, then there are no empty positive domains from
$\mc{A}_{\mc{H}_{\ga\be},S}$ to $\mc{B}_{\mc{H}_{\ga\be},S}$.
\end{prop}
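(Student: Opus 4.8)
The plan is to argue by contraposition. Suppose $\mc{H}_{\ga\be}$ carries an empty positive domain $\phi'\in\mc{D}_0(x',y')$ with $x'\in\mc{A}_{\mc{H}_{\ga\be},S}$ and $y'\in\mc{B}_{\mc{H}_{\ga\be},S}$; I will manufacture from it an empty positive domain in $\mc{H}_{\al\be}$ running from a generator in $\mc{A}_{\mc{H}_{\al\be},S}$ to one in $\mc{B}_{\mc{H}_{\al\be},S}$, contradicting the hypothesis.

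First I would set up a \emph{transport map} $T$ from $2$-chains on $\mc{H}_{\ga\be}$ to $2$-chains on $\mc{H}_{\al\be}$. Since the isotopy relating the two diagrams is supported inside the disk $D\sbs S$ and is the explicit finger move of Figure~\ref{fig:isotopy}, away from $D$ the two diagrams---and hence their elementary regions---are canonically identified, and there $T$ is the identity. Inside $D$, $T$ is the assignment obtained by ``undoing the finger move'' (pushing the moved $\ga$ arc back across the $b$ parallel $\be$ arcs to recreate the $\al$ picture), read directly off Figure~\ref{fig:isotopy}. I would then check that $\phi:=T(\phi')$ is a domain in $\mc{H}_{\al\be}$ joining $x$ to $y$, where $x\in\mc{A}_{\mc{H}_{\al\be},S}$ is the image of $x'$ under the bijection $\mc{A}_{\mc{H}_{\ga\be},S}\xrightarrow{\cong}\mc{A}_{\mc{H}_{\al\be},S}$ (well defined because $x'$, having no coordinate in $S$, certainly has none in $D$), and $y\in\mc{B}_{\mc{H}_{\al\be},S}$ is $y'$ viewed inside $\mc{B}_{\mc{H}_{\al\be},S}$ via the injection $\mc{B}_{\mc{H}_{\ga\be},S}\hookrightarrow\mc{B}_{\mc{H}_{\al\be},S}$. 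Verifying this amounts to matching the $\al$-, $\be$-, and $\del D$-boundary data of $T(\phi')$ inside $D$ against that of $\phi'$, which is a finite local check. For emptiness I would track the coefficient at $w$: the $w$ marking lies in a corner region of $D$ (bounded by one $\ga$ arc, one $\be$ arc, and an arc of $\del D$), and one reads off Figure~\ref{fig:isotopy} that $T$ carries the multiplicity $\phi'$ assigns there to the multiplicity $\phi$ assigns at the region of $\mc{H}_{\al\be}$ containing $w$, so $n_w(\phi)=n_w(\phi')=0$.

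The hard part is positivity: I must show that $T$ sends positive $2$-chains to positive $2$-chains under the boundary constraints at hand. The point is that the regions of $\mc{H}_{\al\be}$ that get created or reshaped inside $D$ by $T$ acquire multiplicities which are non-negative integer combinations of the multiplicities $\phi'$ assigns to regions inside $D$; for instance in the model case $b=1$, $a_1=a_2=0$ the new bigon formed when the arc is pushed up across the single $\be$ arc simply inherits the multiplicity $\phi'$ assigned to the region lying above that $\be$ arc, and the general case is the same bookkeeping performed strand by strand across the $b$-strand $\be$ train track. Here one uses that $x'$ contributes no corner of $\del\phi'$ inside $D$, and, where needed, that $n_w(\phi')=0$, to exclude the configurations in which some region of $\phi$ would receive a negative coefficient; one also notes $\phi\neq 0$, which is immediate since every output multiplicity of $T$ is among, or a sum of, the input multiplicities. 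Granting positivity, $\phi$ is an empty positive domain in $\mc{H}_{\al\be}$ from $x\in\mc{A}_{\mc{H}_{\al\be},S}$ to $y\in\mc{B}_{\mc{H}_{\al\be},S}$, the desired contradiction. I expect the only genuine obstacle to be this last verification---that ``undoing the finger move'' can never force a negative multiplicity---which is not formal and must be extracted from Figure~\ref{fig:isotopy}; everything else is routine. (An alternative packaging would juxtapose $\phi'$ with a canonical small positive empty triangle in the triple diagram $(\Si,\ga,\be,\al,w)$ and then degenerate $\ga$ onto $\al$ to recover a bigon; I would keep the direct argument, however, since it keeps the local combinatorics transparent.)
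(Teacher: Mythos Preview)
Your overall architecture is exactly the paper's: argue the contrapositive, transport the empty positive domain $\phi'$ in $\mc{H}_{\ga\be}$ to a domain $\phi$ in $\mc{H}_{\al\be}$ connecting the corresponding generators, and verify that $\phi$ is still empty and positive. You also correctly locate the only real content in the positivity check and correctly flag that the absence of $x'$-coordinates in $D$ is the relevant input.

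The gap is in your proposed mechanism for positivity. It is \emph{not} true that the multiplicities of $\phi$ on the regions inside $D$ are ``non-negative integer combinations'' of multiplicities of $\phi'$. Undoing the finger move forces genuine subtractions: in the paper's labeling the new coefficients take the form $s+m_i-m_0$, and nothing about $\phi'\geq 0$ alone prevents $m_i<m_0$. Your model case $b=1,\ a_1=a_2=0$ already illustrates this: the new bigon does not simply ``inherit'' a single old multiplicity; its coefficient is a difference of two old ones, and one must explain why that difference is non-negative. Likewise, $n_w(\phi')=0$ plays no role here.

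What actually closes the argument is a monotonicity statement along the $\be$-arcs, extracted from the fact that $x'$ has no coordinate in $D$. Since $\del(\del\phi'\cap\be)=x'-y'$, no point of $D$ occurs with positive sign in this $0$-chain; equivalently, along any oriented $\be$-arc $\tau\subset D$ the coefficient of $\del\phi'\cap\be$ can only increase (``$\del\phi'\cap\be$ does not stop inside $D$''). Applying this to the $\be$-arc separating the $m_i$- and $m_{i+1}$-regions and running to the $u_i$-side gives $m_i-m_{i+1}\le u_i-u_{i+1}$, and induction on $i$ (base case $u_0=0=m_0-m_0$) yields $u_i\le m_i-m_0$ for all $i$. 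Since $u_i\ge 0$, this forces $m_i\ge m_0$, hence $s+m_i-m_0\ge s\ge 0$, and $\phi$ is positive. That inductive step is the missing idea; once you supply it, your proof coincides with the paper's.
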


\begin{proof}  
  We will prove the contrapositive of the statement. The basic idea is that any positive domain  from $\mc{A}_{\mc{H}_{\ga\be},S}$ to $\mc{B}_{\mc{H}_{\ga\be},S}$  induces a corresponding positive domain in the original diagram, simply by tracing multiplicities through the reversal of the isotopy.  To make this precise, let us assume
  that $D_1\in\mc{D}_0(x,y)$ is a positive domain, from some generator
  $x\in \mc{A}_{\mc{H}_{\ga\be},S}$ to some generator
  $y\in \mc{B}_{\mc{H}_{\ga\be},S}$. Let
  $\bar{x}\in \mc{A}_{\mc{H}_{\al\be},S}$ and
  $\bar{y}\in \mc{B}_{\mc{H}_{\al\be},S}$ be the images of $x$ and $y$
  under the bijection
  $\mc{A}_{\mc{H}_{\ga\be},S}\stackrel{\cong}{\longrightarrow}
  \mc{A}_{\mc{H}_{\al\be},S}$ and the injection
  $\mc{B}_{\mc{H}_{\ga\be},S}\hookrightarrow
  \mc{B}_{\mc{H}_{\al\be},S}$, respectively. The empty domain $D_1$
  gives rise to another empty domain
  $D_2\in\mc{D}_0(\bar{x},\bar{y})$. The local coefficients of $D_1$
  and $D_2$ are shown in Figure~\ref{fig:coefficients}. We will simply
  show that $D_2$ is also a positive domain.

\begin{figure}
\centering
\includegraphics{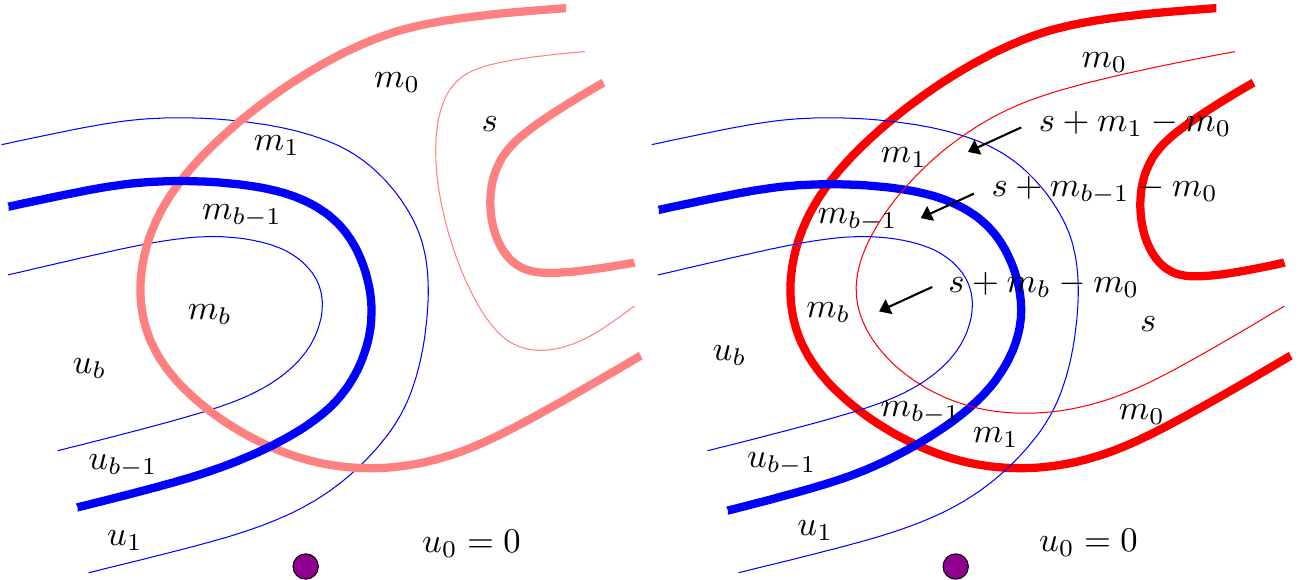}
\caption{\textbf{The local coefficients of $D_1$ and $D_2$ in
    $\mc{H}_{\ga\be}$ and $\mc{H}_{\al\be}$.} We prove that if $D_1$
  is a positive domain, then so is $D_2$, which follows once we show that $s+m_i-m_0\geq s+u_i$.}\label{fig:coefficients}
\end{figure}

Towards this end, we will prove that none of the coefficients
$m_1,\ldots, m_b$ are smaller than the coefficient $m_0$. We will
prove this by showing that $u_i\leq m_i-m_0$, for all $0\leq i\leq
b$. Since each $u_i\geq 0$, this will complete the proof.

We will prove $u_i\leq m_i-m_0$ by an induction on $i$. Since,
$u_0=0=m_0-m_0$, the base case is trivial. Assume by induction that
the statement is true for $i$. Before we prove the statement for
$i+1$, let us make one small observation about the domain $D_1$.

Since $\del(\del D_1\cap{\be})=x-y$, and since none of the coordinates
of $x$ lie in the disk $D$, we therefore have that $\del(\del D_1\cap{\be})$, viewed
as a $0$-chain, does not contain any point with positive sign in the
local neighborhood $D$. Let $\tau$ be an oriented arc, which is a
subspace of a $\be$ circle, and is supported entirely inside $D$. Let
the coefficient of $\del D_1\cap{\be}$ near the beginning of $\tau$ be
$c>0$. The observation that $\del(\del D_1\cap{\be})$ does not contain
any positively signed point in $D$, implies that the coefficient of
$\del D_1\cap{\be}$ near the end of $\tau$ is greater than or equal to
$c$. We summarize this observation by the statement that ``$\del
D_1\cap{\be}$ does not stop inside $D$''.

\begin{figure}
\centering
\includegraphics{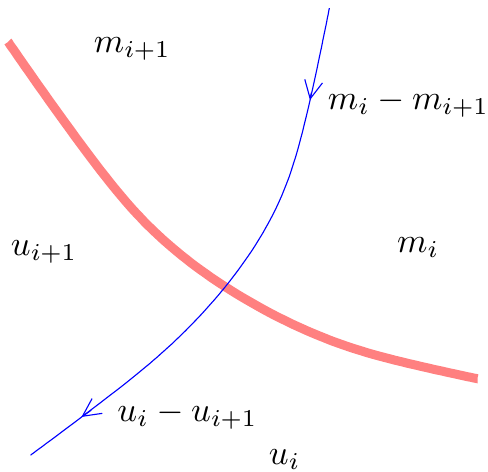}
\caption{\textbf{The induction step.} Assuming $u_i\leq m_i-m_0$, we
  prove $u_{i+1}\leq m_{i+1}-m_0$ by showing
  $u_{i+1}-u_i\leq m_{i+1}-m_i$. The coefficients of $D_1$ and the
  coefficients of $\del D_1\cap{\beta}$ are shown.}\label{fig:induct}
\end{figure}

Now, we are all set to prove the induction statement for $i+1$. In the
Heegaard diagram $\mc{H}_{\ga\be}$, let $\be_1$ be the $\be$ circle that
separates the elementary region with coefficient $m_{i+1}$ from the
elementary region with coefficient $m_i$, and let $\tau$ be an
oriented subarc of $\be_1$, running from the elementary region with
coefficient $m_i$ to the elementary region with coefficient
$u_i$. Therefore, the signed coefficients of $\del D_1\cap{\be}$ near
the beginning and the end of $\tau$ are $m_i-m_{i+1}$ and
$u_i-u_{i+1}$ respectively, as shown in Figure \ref{fig:induct}. In light of the observation above that $\del D_1\cap{\be}$ does not stop inside $D$, it follows that coefficient  of $\del D_1\cap{\be}$ near the end of $\tau$ is greater than or equal to that at the end:
\[ m_i- m_{i+1}\le u_i-u_{i+1}.\]
Subtracting $m_0$ from both sides of the inequality, and rearranging, we have:
\[ u_{i+1} + m_i  - m_0 - u_i \le m_{i+1}-m_0  \]
But the inductive hypothesis is that $0\le m_i  - m_0 - u_i$, hence $u_{i+1}\le m_{i+1}-m_0$, as desired.
\end{proof}

\begin{figure}
\centering
\includegraphics{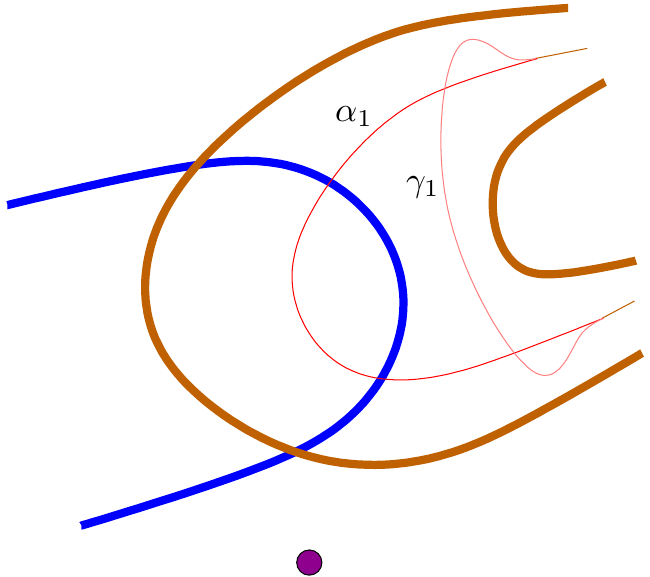}
\caption{\textbf{The Heegaard diagram $\mc{H}_{\al\be\ga}$ in the
    neighborhood $D$.} As before, $\alpha$, $\beta$, $\gamma$ are red,
  blue, and pink, respectively. A thin brown curve denotes a train
  track which is pair of parallel $\al$ and $\ga$ curves, and a thick
  brown curve is a train track of such train
  tracks.}\label{fig:triangle}
\end{figure}

Now, we prove a similar statement for triangular domains. Let
$\mc{H}_{\al\be\ga}=(\Si,\al,\be,\ga,w)$ be the triple Heegaard
diagram, obtained by combining the above two diagrams; it is also
possibly non-admissible. We assume that $\ga_i$ is a small translate of $\al_i$, intersecting it transversely in exactly two points, so that $\ga_i$ is disjoint from
$\al_j$ for $i\neq j$. Let
$\al_1$ be the $\al$ circle that is changed to the $\ga$ circle
$\ga_1$ in Figure \ref{fig:triangle}. Therefore, a neighborhood $N_i$
of $\al_i\cup\ga_i$ for $i\neq 1$ looks like the first part of Figure
\ref{fig:alge}, with none of the two intersection points in
$\al_i\cap\ga_i$ lying in the neighborhood $D$. A neighborhood
$N_1$ of $\al_1\cup\ga_1$ looks like the second part of Figure
\ref{fig:alge}, with both the intersection points in $\al_1\cap\ga_1$
lying in the neighborhood $D$. The coordinates of the top generator
$\theta$ are shown.

\begin{figure}
\centering
\includegraphics{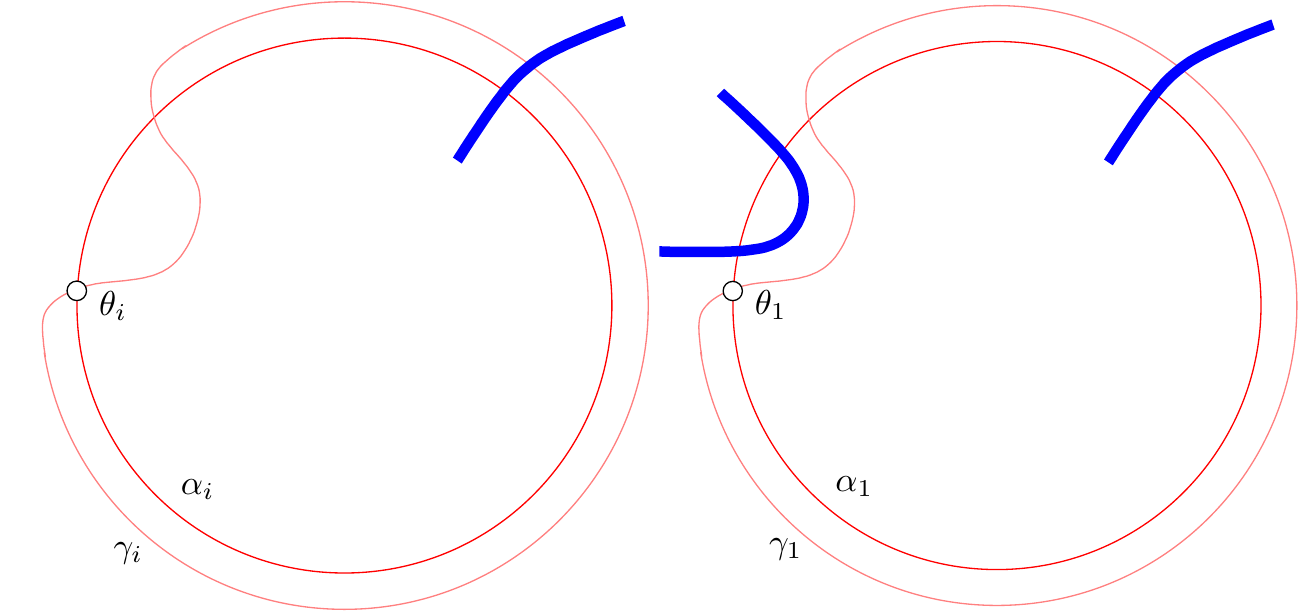}
\caption{\textbf{Neighborhoods $N_i$ of $\al_i\cup\ga_i$.} For
  $i\neq 1$ (left), the small bigon region is disjoint from $D$, while
  for $i=1$ (right), the small bigon region is contained inside
  $D$. The coordinates of the top generator $\theta$ are shown by
  white dots.}\label{fig:alge}
\end{figure}

\begin{prop}\label{prop:abab'}
If there are no empty positive domains from $\mc{A}_{\mc{H}_{\al\be},S}$ to
$\mc{B}_{\mc{H}_{\al\be},S}$, then there are no empty positive triangular domains from
$\mc{A}_{\mc{H}_{\al\be},S}$ to $\mc{B}_{\mc{H}_{\ga\be},S}$.
\end{prop}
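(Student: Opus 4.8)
The plan is to mimic the proof of Proposition~\ref{prop:aba'b'}, proving the contrapositive by converting a positive empty triangular domain into a positive empty domain of $\mc{H}_{\al\be}$ running from $\mc{A}_{\mc{H}_{\al\be},S}$ to $\mc{B}_{\mc{H}_{\al\be},S}$, which is forbidden by hypothesis. Concretely, suppose $T\in\mc{T}_0(x,y)$ is a positive empty triangular domain with $x\in\mc{A}_{\mc{H}_{\al\be},S}$ and $y\in\mc{B}_{\mc{H}_{\ga\be},S}$, and let $\theta$ be the top generator. Let $y'\in\mc{B}_{\mc{H}_{\al\be},S}$ be the image of $y$ under the injection $\mc{B}_{\mc{H}_{\ga\be},S}\hookrightarrow\mc{B}_{\mc{H}_{\al\be},S}$ fixed in Section~\ref{sec:3} (that is, $y'$ is obtained from $y$ by nearest-point translation of its coordinates along the strips between the $\al$ and $\ga$ circles). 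There is a canonical ``small'' triangular domain $\nabla$ joining $y'$ to $y$ whose $\ga\al$-corner is $\theta$; since the $\ga$ circles are, on the arcs carrying the $\be$-intersections, small translates of the $\al$ circles, $\nabla$ is a positive domain. Now $D_2:=T-\nabla$ has trivial $\ga$-boundary, so it is pulled back from a $2$-chain on $\Si\sm(\al\cup\be)$; a bookkeeping of boundaries shows $D_2$ is a domain of $\mc{H}_{\al\be}$ joining $x\in\mc{A}_{\mc{H}_{\al\be},S}$ to $y'\in\mc{B}_{\mc{H}_{\al\be},S}$.

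Two checks are then essentially free. First, $D_2\neq 0$: if $T=\nabla$ then $x=y'$, so $y$ would be the generator of $\mc{H}_{\ga\be}$ nearest to $x$; but the coordinates of $x$ avoid $S$, and the $\ga$ circles agree with the $\al$ circles near any point carrying a $\be$-intersection outside the finger-moved arc, so $y$ would lie in $\mc{A}_{\mc{H}_{\ga\be},S}$, contradicting $y\in\mc{B}_{\mc{H}_{\ga\be},S}$. Second, once we know $D_2$ is positive, it is automatically empty: $n_w(D_2)=n_w(T)-n_w(\nabla)=-n_w(\nabla)\le 0$, while positivity of $D_2$ gives $n_w(D_2)\ge 0$, forcing $n_w(D_2)=0$. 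Thus, modulo showing $D_2$ is positive, $D_2$ is a positive empty domain from $\mc{A}_{\mc{H}_{\al\be},S}$ to $\mc{B}_{\mc{H}_{\al\be},S}$, which is the contradiction we want.

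The substance of the argument --- and the step I expect to be the main obstacle --- is the positivity of $D_2$, equivalently the pointwise inequality $T\ge\nabla$. Away from the neighborhoods $N_i$ of $\al_i\cup\ga_i$ the domain $\nabla$ vanishes, so the statement is local near those strips. For $i\neq 1$, where $\ga_i$ is a genuine small translate of $\al_i$ and $N_i$ looks like the left of Figure~\ref{fig:alge}, this is the classical small-triangle lemma: a positive triangular domain with $\ga\al$-corner $\theta_i$ and $\ga\be$-corner $y_i$ must contain the thin triangle $\nabla_i$, because $\del T\cap\ga$ cannot ``stop'' inside the strip. For $i=1$, where $\ga_1$ arises from $\al_1$ by the large isotopy of Figure~\ref{fig:isotopy} and $N_1$ sits inside $D$ as on the right of Figure~\ref{fig:alge}, the strip between $\al_1$ and $\ga_1$ is cut by the $\be$ arcs of $D$, and this is exactly where one must work. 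Here I would run an induction on the multiplicities of $T$ across the $\be$ arcs of $D$ directly parallel to the induction in the proof of Proposition~\ref{prop:aba'b'}: using that $\del T\cap\ga$ does not stop inside $D$ (again because no coordinate of $y$ lies on the finger-moved arc of $\ga_1$) together with $n_w(T)=0$, one shows the multiplicity of $T$ dominates that of $\nabla$ in every sub-region of the strip. Combining the local estimates over all $i$ gives $T\ge\nabla$, hence $D_2$ is positive, which completes the proof.
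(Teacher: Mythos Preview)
Your overall strategy is the paper's: prove the contrapositive by subtracting from the given positive triangular domain $T$ a triangular domain $\nabla$ supported in the strips $N_i$, so that $T-\nabla$ descends to an empty positive $\al\be$-domain from $\mc{A}_{\mc{H}_{\al\be},S}$ to $\mc{B}_{\mc{H}_{\al\be},S}$. The skeleton is right; two details in your execution diverge from the paper and, as stated, don't quite work.

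First, the definition of $\nabla$. You take $\nabla$ to be the ``canonical small'' positive triangle and then assert that $T-\nabla$ has trivial $\ga$-boundary. These two properties are not simultaneously available: for $T-\nabla$ to descend to a $2$-chain on $\Si\sm(\al\cup\be)$ one needs $\del\nabla\cap\ga=\del T\cap\ga$ as $1$-chains, not merely that they share the same endpoints $y-\theta$; the two can differ by multiples of $[\ga_i]$. The paper instead \emph{defines} its subtracted domain (their $D_2$) by the condition $\del D_2\cap\ga=\del T\cap\ga$, which forces the difference to live over $\Si\sm(\al\cup\be)$. With that choice $D_2$ need not be positive---indeed the paper notes that the coefficient $r$ of $\del T\cap\ga_1$ need not be nonnegative---so your sandwich argument for emptiness is not available. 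Emptiness comes for free anyway, since $D_2$ is supported in the $N_i$, which avoid the $w$-markings.

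Second, the positivity induction for $i=1$. You propose to run it using that $\del T\cap\ga$ does not stop inside $D$. But the sub-regions of the strip between $\al_1$ and $\ga_1$ inside $D$ are separated by the $\be$ arcs, and $\ga_1$ does not meet those arcs in $D$; so the behavior of $\del T\cap\ga_1$ tells you nothing about how the multiplicities jump across the $\be$ arcs. The paper instead reuses \emph{verbatim} the $\be$-boundary observation from Proposition~\ref{prop:aba'b'}: since $x\in\mc{A}_{\mc{H}_{\al\be},S}$ has no coordinate in $D$, $\del T\cap\be$ does not stop in $D$, giving $m_i\ge m_0$ for all $i$; combined with $m_0\ge -r$ (from positivity of $T$ across $\ga_1$) this yields positivity of $T-D_2$ in the strip. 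So the correct induction tracks $\del T\cap\be$, leveraging the hypothesis on $x$, exactly as in Proposition~\ref{prop:aba'b'}.
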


\begin{proof}
  Let $D_1\in\mc{T}_0(x,y)$ be a positive triangular domain, for some
  $x\in \mc{A}_{\mc{H}_{\al\be},S}$ and
  $y\in \mc{B}_{\mc{H}_{\ga\be},S}$. Let
  $\bar{y}\in \mc{B}_{\mc{H}_{\al\be},S}$ be the image of $y$ under
  the injection
  $\mc{B}_{\mc{H}_{\ga\be},S}\hookrightarrow
  \mc{B}_{\mc{H}_{\al\be},S}$. It is easy to see that there is a
  unique empty triangular domain $D_2\in\mc{T}_0(\bar{y},y)$, whose
  non-zero coefficients are supported inside the neighborhoods $N_i$,
  such that $\del D_2\cap{\ga}=\del D_1\cap{\ga}$. Then, the $2$-chain
  $D_3=D_1-D_2$ is a domain in $\mc{D}_0(x,\bar{y})$. We will 
  show that $D_3$ is also a positive domain,  establishing  the
  contrapositive of the given statement.

  The coefficients of $D_2$ are zero outside $\cup_i N_i$, and the
  neighborhoods $N_i$ for $i\neq 1$ can be considered as special
  cases of the neighborhood $N_1$. Therefore, we only need to
  concentrate on the coefficients of $D_3$ in $N_1$. Figure
  \ref{fig:alger} shows the coefficients of $D_1$, $\del D_1\cap{\ga_1}$
  and $D_3$ in this neighborhood. The coordinates of $\theta$ and $y$
  on $\ga_1$ are shown. The coefficient of $\del D_1\cap{\gamma_1}$ is
  either $r$ or $r+1$ (which need not be positive), as shown.

\begin{figure}
\centering
\includegraphics{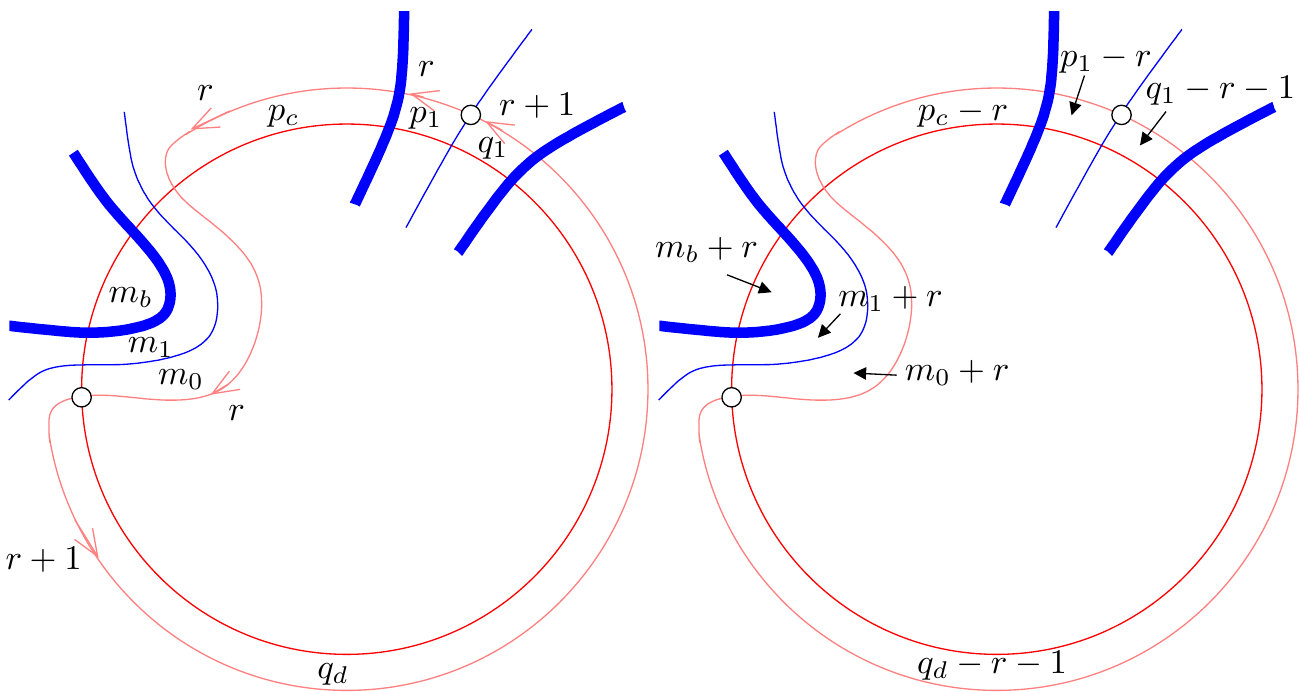}
\caption{\textbf{Coefficients of $D_1$ (left) and $D_3$ (right) in the
    neighborhood $N_1$ of $\al_1\cup\ga_1$.} The coordinates of
  $\theta $ and $y$ are shown by white dots. The coefficients of
  $\del D_1\cap{\ga_1}$ are also shown on the left.}\label{fig:alger}
\end{figure}

Since $D_1$ is a positive domain, and since the region with
coefficient $q_\ell$ in $D_1$ has $\gamma_1$ on its boundary with
coefficient $r+1$, we must have $q_\ell\geq r+1$ for all
$1\leq\ell\leq d$. Similarly, we must have $p_\ell\geq r$ for all
$1\leq\ell \leq c$ and $m_0\geq -r$. Therefore, in order to show that
$D_3$ is also a positive domain, we only need to show that each of the
coefficients $m_1,\ldots,m_b$ are greater than or equal to
$-r$. However, exactly as in the proof of
Proposition~\ref{prop:aba'b'}, using the fact that $x$ has no
coordinates inside the disk $D$, we can show that none of the
coefficients $m_1,\ldots,m_b$ are smaller than $m_0$, and this
completes the proof.
\end{proof}

Let us henceforth assume that the Heegaard diagram $\mc{H}_{\ga\be}$
is admissible. Then $\mc{H}_{\al\be}$ and $\mc{H}_{\al\be\ga}$ are
also admissible, and in that case, $\tCF_{\mc{H}_{\al\be}}$ and
$\tCF_{\mc{H}_{\ga\be}}$ are the chain complexes, freely generated
over $\F_2$, by $\mc{G}_{\mc{H}_{\al\be}}$ and
$\mc{G}_{\mc{H}_{\ga\be}}$, respectively. Let
$\tSF_{\mc{H}_{\al\be},S}$ and $\tSF_{\mc{H}_{\ga\be},S}$ be the
$\F_2$-submodules, freely generated by $\mc{A}_{\mc{H}_{\al\be},S}$
and $\mc{A}_{\mc{H}_{\ga\be},S}$, respectively.

\begin{thm}\label{thm:simplify}
Assume that there are no empty positive domains from $\mc{A}_{\mc{H}_{\al\be},S}$
to $\mc{B}_{\mc{H}_{\al\be},S}$. Then $\tSF_{\mc{H}_{\al\be},S}$ is a subcomplex of
$\tCF_{\mc{H}_{\al\be}}$, $\tSF_{\mc{H}_{\ga\be},S}$ is a subcomplex of
$\tCF_{\mc{H}_{\ga\be}}$, and the chain map from $\tCF_{\mc{H}_{\al\be}}$ to
$\tCF_{\mc{H}_{\ga\be}}$ induces a chain map from $\tSF_{\mc{H}_{\al\be},S}$ to
$\tSF_{\mc{H}_{\ga\be},S}$. Furthermore, the chain maps
$\tCF_{\mc{H}_{\al\be}}\rightarrow\tCF_{\mc{H}_{\ga\be}}$ and
$\tSF_{\mc{H}_{\al\be},S}\rightarrow\tSF_{\mc{H}_{\ga\be},S}$ are quasi-isomorphisms.
\end{thm}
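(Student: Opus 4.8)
The plan is to prove the four assertions in turn, extracting the subcomplex and chain-map statements essentially for free from the positivity of the differential and the triangle map together with Propositions~\ref{prop:aba'b'} and~\ref{prop:abab'}, and then establishing the quasi-isomorphism statements by a leading-order (area) filtration argument.

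For the subcomplex statements, recall that $\del$ on $\tCF_{\mc{H}_{\al\be}}$ counts only empty \emph{positive} Maslov-index-one domains, since the contribution function is supported on positive domains. Hence if $x\in\mc{A}_{\mc{H}_{\al\be},S}$ and $c(D)\neq 0$ for some $D\in\mc{D}_0(x,y)$ with $\mu(D)=1$, then $D$ is a positive empty domain out of $\mc{A}_{\mc{H}_{\al\be},S}$, so the hypothesis forces $y\in\mc{A}_{\mc{H}_{\al\be},S}$; thus $\tSF_{\mc{H}_{\al\be},S}$ is a subcomplex of $\tCF_{\mc{H}_{\al\be}}$. By Proposition~\ref{prop:aba'b'} the analogous hypothesis holds for $\mc{H}_{\ga\be}$, so the identical argument shows $\tSF_{\mc{H}_{\ga\be},S}$ is a subcomplex of $\tCF_{\mc{H}_{\ga\be}}$.

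Next I would record that the triangle map $f$ is a chain map and restricts to the subcomplexes. The identity $\del_{\ga\be}\circ f=f\circ\del_{\al\be}$ is the standard consequence of analyzing the ends of the one-dimensional moduli spaces of empty Maslov-index-one triangular domains: each end is an index-zero triangle broken off together with an index-one bigon---an $\al\be$-bigon, contributing $f\circ\del_{\al\be}$, or a $\ga\be$-bigon, contributing $\del_{\ga\be}\circ f$---or else an index-one $\al\ga$-bigon broken off at $\theta$, and contributions of the last kind vanish because $\theta$, the top generator of the diagram $(\Si,\ga,\al,w)$ for a (stabilized) connected sum of copies of $S^1\times S^2$, is a cycle; this is the reproof of a special case of the triangle-map invariance referred to earlier. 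Since $f$ counts only positive empty triangular domains, Proposition~\ref{prop:abab'} gives that for $x\in\mc{A}_{\mc{H}_{\al\be},S}$ every term of $f(x)$ lies in $\mc{A}_{\mc{H}_{\ga\be},S}$, so $f$ carries $\tSF_{\mc{H}_{\al\be},S}$ into $\tSF_{\mc{H}_{\ga\be},S}$; being a chain map into which these are subcomplexes, the restriction is a chain map.

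For the quasi-isomorphism statements, I would fix an area form on $\Si$ and the induced filtration on $\tCF_{\mc{H}_{\al\be}}$ and $\tCF_{\mc{H}_{\ga\be}}$, normalized so that the bigons between each $\al_i$ and $\ga_i$---hence the standard small triangles through $\theta$---are arbitrarily thin while every other elementary region has area bounded below by a fixed $c>0$; with respect to this filtration $f$ is filtered up to an overall shift. For $x\in\mc{A}_{\mc{H}_{\al\be},S}$, none of whose coordinates lie in $D\subseteq S$, all coordinates of $x$ lie where $\ga$ is a small translate of $\al$, so the unique minimal-area empty positive Maslov-zero triangular domain from $x$ is the product of the standard small triangles, joining $\theta$ to the nearest-point image $\bar{x}$ of $x$, and it contributes $1$ (each small triangle counts once, and no other triangle out of $x$ has comparably small area). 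Hence the associated graded of $f|_{\tSF_{\mc{H}_{\al\be},S}}$ is the nearest-point bijection $\mc{A}_{\mc{H}_{\al\be},S}\stackrel{\cong}{\to}\mc{A}_{\mc{H}_{\ga\be},S}$, so $f|_{\tSF}$ is a filtered quasi-isomorphism, hence a quasi-isomorphism. On the full complex the same computation identifies the associated graded of $f$ on the generators disjoint from the $\al_1\cap\be$ intersection points inside $D$ with the nearest-point bijection; the remaining generators, which use those intersection points, occur in canceling pairs joined by the empty bigons created and destroyed by the isotopy carrying $\al$ to $\ga$, so factoring that isotopy into elementary moves and invoking the standard Gaussian-elimination lemma---equivalently, citing isotopy invariance of $\tHF$ from \cite{POZSz}---shows $f\colon\tCF_{\mc{H}_{\al\be}}\to\tCF_{\mc{H}_{\ga\be}}$ is a quasi-isomorphism. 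All maps preserve the Maslov grading, since $f$ counts Maslov-index-zero triangular domains based at the top generator $\theta$.

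The step I expect to be the main obstacle is the leading-order analysis underlying the full-complex quasi-isomorphism---matching the change in the generating set under $\al\to\ga$ to an explicit cancellation---since there the complex genuinely changes size. By contrast the restriction to $\tSF$ is clean precisely because every generator of $\mc{A}$ avoids $D$, so $f|_{\tSF}$ is literally ``nearest-point bijection plus higher-area terms''; this is exactly where Propositions~\ref{prop:aba'b'} and~\ref{prop:abab'}, together with the running hypothesis on empty positive domains, do the work.
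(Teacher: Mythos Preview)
Your argument is essentially the paper's, and the subcomplex and chain-map-restriction steps match exactly. The one place where you are looser than the paper is the construction of the ``area filtration.'' An arbitrary area form on $\Si$ does \emph{not} induce a well-defined filtration on $\tCF$: the relative filtration level between two generators is computed via a connecting domain, and this is only independent of the choice of domain if the area form vanishes on all empty periodic domains. The paper addresses this by invoking admissibility of $\mc{H}_{\ga\be}$ (via \cite[Lemma~4.12]{POZSz}) to produce a positive $2$-cochain $C_0$ vanishing on periodic domains, then explicitly extending it to a non-negative cochain $C$ on the triple diagram $\mc{H}_{\al\be\ga}$ that is zero exactly on the thin regions inside the neighborhoods $N_i$ and still vanishes on all empty periodic domains of the triple diagram. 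Your ``area form with arbitrarily thin bigons'' is morally this $C$, but you should say where admissibility enters.

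A second, more stylistic difference: you split the quasi-isomorphism argument, handling $\tSF$ by the filtration and the full $\tCF$ by citing isotopy invariance (or Gaussian elimination). The paper instead runs the \emph{same} filtration argument on both mapping cones simultaneously; once $C$ is in hand, the associated graded of both $\tCF_{\mc{H}_{\al\be}}\to\tCF_{\mc{H}_{\ga\be}}$ and $\tSF_{\mc{H}_{\al\be},S}\to\tSF_{\mc{H}_{\ga\be},S}$ counts only domains and triangles supported in $\cup_i N_i$, which are easily seen to give isomorphisms. This is cleaner than your cancellation sketch for the extra generators in $D$, and avoids having to factor the isotopy into elementary moves.
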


\begin{proof}
Proposition~\ref{prop:aba'b'} implies that there are no empty positive
domains from $\mc{A}_{\mc{H}_{\ga\be},S}$ to $\mc{B}_{\mc{H}_{\ga\be},S}$, and Proposition~\ref{prop:abab'} implies that there are no empty positive triangular
domains from $\mc{A}_{\mc{H}_{\al\be},S}$ to $\mc{B}_{\mc{H}_{\ga\be},S}$. Since the
non-zero terms in the boundary maps on $\tCF_{\mc{H}_{\al\be}}$ and
$\tCF_{\mc{H}_{\ga\be}}$ come only from empty positive domains, and the
non-zero terms in the chain map from $\tCF_{\mc{H}_{\al\be}}$ to
$\tCF_{\mc{H}_{\ga\be}}$ come only from empty positive triangular domains,
$\tSF_{\mc{H}_{\al\be},S}\hookrightarrow\tCF_{\mc{H}_{\al\be}}$ is a subcomplex,
$\tSF_{\mc{H}_{\ga\be},S}\hookrightarrow\tCF_{\mc{H}_{\ga\be}}$ is a subcomplex,
and the chain map $\tCF_{\mc{H}_{\al\be}}\rightarrow\tCF_{\mc{H}_{\ga\be}}$
induces a chain map $\tSF_{\mc{H}_{\al\be},S}\rightarrow\tSF_{\mc{H}_{\ga\be},S}$,
resulting in the following commuting square. 
\[
\begin{tikzpicture}[xscale=2]
\node (sf-ab) at (0,0) {$\tSF_{\mc{H}_{\al\be},S}$};
\node (sf-cb) at (0,-1) {$\tSF_{\mc{H}_{\ga\be},S}$};
\node (cf-ab) at (1,0) {$\tCF_{\mc{H}_{\al\be}}$};
\node (cf-cb) at (1,-1) {$\tCF_{\mc{H}_{\ga\be}}$};

\draw[right hook->](sf-ab) -- (cf-ab);
\draw[right hook->](sf-cb) -- (cf-cb);
\draw[->](sf-ab) -- (sf-cb);
\draw[->](cf-ab) -- (cf-cb);
\end{tikzpicture}
\]

We will now show that the vertical arrows induce isomorphisms on
homology. A \emph{$2$-cochain} on a Heegaard diagram
or a triple Heegaard diagram is a map which assigns real numbers to
the elementary regions; a \emph{non-negative $2$-cochain} is a
$2$-cochain which only assigns non-negative numbers; and a
\emph{positive $2$-cochain} is a $2$-cochain which only assigns
positive numbers. Since $\mc{H}_{\ga\be}$ is admissible, by
\cite[Lemma 4.12]{POZSz}, there exists a positive $2$-cochain $C_0$ on
$\mc{H}_{\ga\be}$, which evaluates to zero on all empty periodic
domains in $\mc{H}_{\ga\be}$.

A cochain in $\mc{H}_{\al\be\ga}$ induces cochains in
$\mc{H}_{\al\be}$ and $\mc{H}_{\ga\be}$, by forgetting the $\ga$
circles and the $\al$ circles, respectively, as well as the
coefficients of the cochain on the thin elementaty regions that lie
entirely inside the neighbordhoods $N_i$. We will now construct a
non-negative cochain $C$ on $\mc{H}_{\al\be\ga}$, such that: $C$
assigns zero precisely to the elementary regions that lie entirely in
the neighborhoods $N_i$; and $C$ induces the positive cochain $C_0$ in
$\mc{H}_{\ga\be}$. Since the empty periodic domains in
$\mc{H}_{\al\be\ga}$ are generated by the empty periodic domains in
$\mc{H}_{\ga\be}$ and the periodic domains that are supported in
$\cup_i N_i$, this would imply that $C$ evaluates to zero on any empty
periodic domain in $\mc{H}_{\al\be\ga}$. The way to construct $C$ is
fairly straightforward. Let $R$ be an elementary region in
$\mc{H}_{\ga\be}$, and let $r$ be the assignment of $C_0$ on $R$. The
region $R$ might get cut up into several elementary regions
$R_1,\ldots,R_n$ in $\mc{H}_{\al\be\ga}$, and some of them might lie
entirely in the neighborhoods $N_i$, but at least one of them does
not. Choose non-negative real numbers $r_1,\ldots,r_n$, such that,
$\sum_i r_i=r$ and $r_i=0$ if and only if $R_i$ lies entirely in
$\cup_i N_i$. Then assign the number $r_i$ to the elementary region
$R_i$ in the $2$-cochain $C$.

This non-negative $2$-cochain $C$ gives rise to filtrations on the
mapping cones
$\tSF_{\mc{H}_{\al\be},S}\rightarrow\tSF_{\mc{H}_{\ga\be},S}$
and $\tCF_{\mc{H}_{\al\be}}\rightarrow\tCF_{\mc{H}_{\ga\be}}$,
as follows: given any two generators
$x,y\in\mc{G}_{\mc{H}_{\al\be}}\cup\mc{G}_{\mc{H}_{\ga\be}}$, the
relative filtration grading between them is $\langle C,D\rangle$, for
any $D$ in $\mc{D}_0(x,y)$ or $\mc{T}_0(x,y)$ as the case may be. On
the associated graded level, we only count domains or triangular
domains that lie entirely inside these neighborhoods $N_i$, and then
it is a fairly straightforward to check that the associated graded
maps on the associated graded objects are isomorphisms, and therefore,
the original chain maps must have been quasi-isomorphisms as well.
\end{proof}

\section{Main theorems}\label{sec:4}

\begin{proof}[Proof of Theorem \ref{thm1}]
  Following the notations from Section~\ref{subsec:murasugi}, let
  $\mc{H}_1$, $\mc{H}_2$ and $\mc{H}_1*\mc{H}_2$, as shown in Figures
  \ref{fig:bothlinksdiagram} and \ref{fig:combinedheegaard}, be the
  Heegaard diagrams adapted to Seifert surfaces $R_1$, $R_2$ and
  $R_1*R_2$ for the links $L_1$, $L_2$ and $L=L_1*L_2$,
  respectively. The corresponding Heegaard surfaces contain embedded
  subsurfaces $S_1$, $S_2$ and $S_1*S_2$, which represent $R_1$, $R_2$
  and $R_1*R_2$, respectively. Furthermore, $\mc{H}_1$ has
  $(l_1+\delta_1)$ $w$-markings, $\mc{H}_2$ has $(l_2+\delta_2)$
  $w$-markings, $\mc{H}_1*\mc{H}_2$ has $(l+\delta)$ $w$-markings.

\begin{figure}
\includegraphics{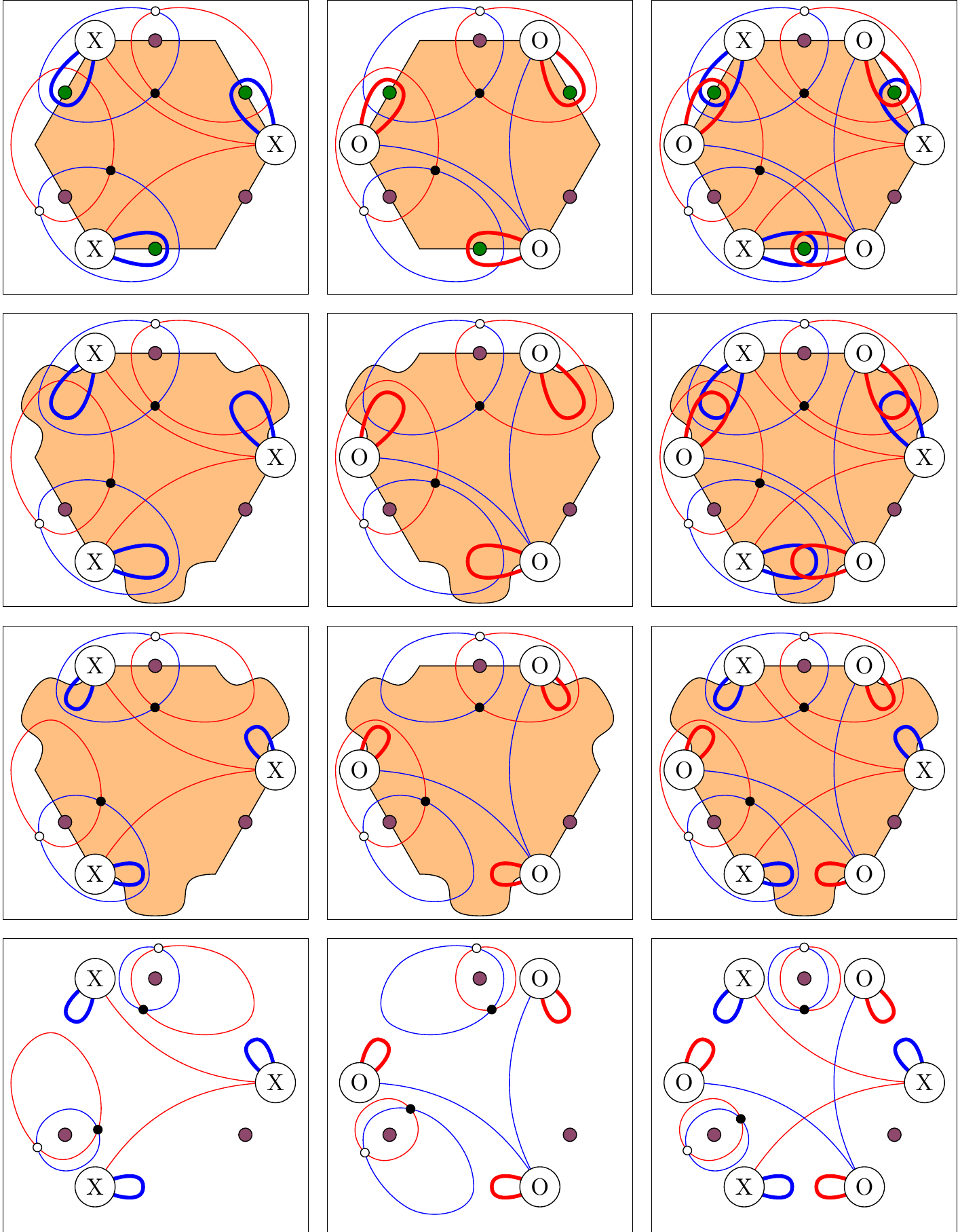}
\caption{\textbf{The Heegaard diagrams appearing in the proof of
    Theorems~\ref{thm1} and~\ref{thm2}.} The left, middle, right
  columns represent diagrams obtained from $\mc{H}_1$, $\mc{H}_2$,
  $\mc{H}_1*\mc{H}_2$, respectively; the consecutive rows represent
  the diagrams $\mc{H}$, $\mc{H}'$, $\mc{H}''$, and $\mc{H}'''$; X and
  O denote handles going down (into the page) and up (towards the
  reader), respectively.}\label{fig:modified-diagrams}
\end{figure}
Thanks to Corollary~\ref{cor:2-step-filtered}, we only need to produce an isomorphism of graded chain complexes
\begin{align*}
\mc{F}_{\mc{H}_1*\mc{H}_2}&\big(-\frac{1}{2}(l+2\delta-\chi(R_1*R_2))\big)[l+\delta-1]\\&\cong \mc{F}_{\mc{H}_1}\big(-\frac{1}{2}(l_1+2\delta_1-\chi(R_1))\big)[l_1+\delta_1-1]\otimes
\mc{F}_{\mc{H}_2}\big(-\frac{1}{2}(l_2+2\delta_2-\chi(R_2))\big)[l_2+\delta_2-1],
\end{align*}
or in terms of the notation from Section~\ref{sec:3}, since $(l+\delta-n)=(l_1+\delta_1-n)+(l_2+\delta_2-n)$,
\begin{equation}\label{eq:graded-isom}
\tSF_{\mc{H}_1*\mc{H}_2,S_1*S_2}[n-1]\cong\tSF_{\mc{H}_1,S_1}[n-1]\otimes\tSF_{\mc{H}_2,S_2}[n-1].
\end{equation}

Let $(\mc{H},{S})$ denote any of $(\mc{H}_1,S_1)$, $(\mc{H}_2,{S}_2)$
or $(\mc{H}_1*\mc{H}_2,S_1*S_2)$, as shown in the first row of
Figure~\ref{fig:modified-diagrams}. The sphere $S^2$ of the Murasugi
sum is represented by the squares on the page, with $L_1$ lying below
the page and $L_2$ lying above. Let $\Si$ be the Heegaard surface and
let $\Si^1$ (respectively, $\Si^2$) be the portion of $\Si$ that lies
inside (respectively, outside) the sphere $S^2$.

Let $\mc{H}'$ be the Heegaard diagrams for $S^3$ obtained from
$\mc{H}$ by forgetting the $z$ markings, and let $S'$ be the open
subsurface of the Heegaard surface of $\mc{H}$, obtained by slightly
modifying $S$ in a neighborhood of the $2n$-gon $A_1A_2\dots A_{2n}$
so as to include all the intersections between $\alpha$ and $\beta$
circles near the erstwhile $z$-markings. These modified diagrams
$(\mc{H}',{S}')$ are shown in the second row of
Figure~\ref{fig:modified-diagrams}.

Since we have not changed the underlying Heegaard diagrams, clearly,
$\tCF_{\mc{H'}}=\tCF_{\mc{H}}$.  Next we claim that any generator
$x\in\mc{G}_{\mc{H}}$ that does not have any coordinate in $S$ can not
have any coordinate in $S'$ either. This is a simple counting
argument.  Indeed, let us say $\mc{H}_i$ has a total of $n_i$ $\alpha$-circles
and $n_i$ $\beta$-circles; of them, exactly $(n-1)$ $\alpha$-circles
and $(n-1)$ $\beta$-circles lie entirely inside the $S^2$. Then
$\mc{H}_1*\mc{H}_2$ has a total of $(n_1+n_2-n+1)$ $\alpha$-circles
and $(n_1+n_2-n+1)$ $\beta$-circles, again with exactly $(n-1)$
$\alpha$-circles and $(n-1)$ $\beta$-circles lying entirely inside the
$S^2$. For $\mc{H}\neq \mc{H}_2$, we see that $(n_1-n+1)$ $\alpha$
circles of $\mc{H}$ lie entirely within $\Si^1\cup S$, and so $x$ must
have at least $(n_1-n+1)$ coordinates in $\Si^1$ (as it avoids the surface $S$, by assumption). Similarly, for
$\mc{H}\neq \mc{H}_1$, we see that $(n_2-n+1)$ $\beta$ circles of
$\mc{H}$ lie entirely within $\Si^2\cup S$, and so $x$ must have at
least $(n_2-n+1)$ coordinates in $\Si^2$. Therefore, in all cases, $x$
has at most $(n-1)$ coordinates in the sphere $S^2$.  It follows that, in fact, $x$ must have exactly $(n-1)$ coordinates in the sphere, occupied by the $(n-1)$ $\alpha$ and $\beta$ circles that
lie entirely therein. Specifically, they must be the white dots as
shown in the first row of
Figure~\ref{fig:modified-diagrams}. Therefore, we get that
$\tSF_{\mc{H}',S'}=\tSF_{\mc{H},S}$. Moreover, since there are no
positive domains from generators of $\tSF_{\mc{H},S}$ to the other
generators of $\tCF_{\mc{H}}$ due to Alexander grading, there are no
positive domains from the generators of $\tSF_{\mc{H}',S'}$ to the
generators of $\tCF_{\mc{H'}}$ as well, and we have the
following identification of subcomplexes.
\begin{equation}\label{eq:trivial-isomorphism}
\vcenter{\hbox{\begin{tikzpicture}[xscale=2]
\node (sf-ab) at (0,0) {$\tSF_{\mc{H},S}$};
\node (sf-cb) at (0,-1) {$\tSF_{\mc{H}',S'}$};
\node (cf-ab) at (1,0) {$\tCF_{\mc{H}}$};
\node (cf-cb) at (1,-1) {$\tCF_{\mc{H}'}$};

\draw[right hook->](sf-ab) -- (cf-ab);
\draw[right hook->](sf-cb) -- (cf-cb);
\path (sf-ab) -- (sf-cb) node[midway,rotate=90] {=};
\path (cf-ab) -- (cf-cb) node[midway,rotate=90] {=};
\end{tikzpicture}}}
\end{equation}

As in Section~\ref{sec:3}, we perform local isotopies to separate the
$\alpha$ and $\beta$ curves in the neighborhood of the erstwhile
$z$-markings so that we obtain the Heegaard diagrams in the third row
of Figure~\ref{fig:modified-diagrams}, which we denote by $(\mc{H}'',S')$. The aforementioned isotopies are supported inside $S'$, and
there are no domains from the generators of $\tSF_{\mc{H}',S'}$ to the
other generators of $\tCF_{\mc{H}'}$.   Recalling that the Heegaard
diagrams were constructed to ensure that $\mc{H}''$ is admissible (see ~(\ref{item:make-stuff-admissible})), we see that the hypotheses of
Propositions~\ref{prop:aba'b'} and~\ref{prop:abab'} are satisfied.  Applying the propositions, we obtain
a quasi-isomorphism between the following two-step filtered complexes.
\begin{equation}\label{eq:technical-isomorphism}
\vcenter{\hbox{\begin{tikzpicture}[xscale=2]
\node (sf-ab) at (0,0) {$\tSF_{\mc{H}',S'}$};
\node (sf-cb) at (0,-1.5) {$\tSF_{\mc{H}'',S'}$};
\node (cf-ab) at (1,0) {$\tCF_{\mc{H}'}$};
\node (cf-cb) at (1,-1.5) {$\tCF_{\mc{H}''}$};

\draw[right hook->](sf-ab) -- (cf-ab);
\draw[right hook->](sf-cb) -- (cf-cb);
\draw[->] (sf-ab) -- (sf-cb) node[midway,anchor=east] {q.i.};
\draw[->] (cf-ab) -- (cf-cb) node[midway,anchor=west] {q.i.};
\end{tikzpicture}}}
\end{equation}

Next we claim that for any generator $x$ of $\tCF_{\mc{H}''}$, all its
coordinates in $S^2$ must be the white or black dots from the third
row of Figure~\ref{fig:modified-diagrams}. It is once again a counting
argument, but with the roles of $\al$ and $\be$ reversed. For
$\mc{H}''\neq\mc{H}''_2$, we see that $(n_1-n+1)$ $\beta$ circles of
$\mc{H}''$ have no intersections with $\alpha$ circles in $S^2$, so
$x$ must have at least $(n_1-n+1)$ coordinates in $\Si^1$. Similarly,
for $\mc{H}''\neq\mc{H}''_1$, we see that $(n_2-n+1)$ $\alpha$ circles
of $\mc{H}''$ have no intersections with $\beta$ circles in $S^2$, so
$x$ must have at least $(n_2-n+1)$ coordinates in $\Si^2$. In all
cases, $x$ has at most $(n-1)$ coordinates in the sphere $S^2$, and
therefore, they must be occupied by the $(n-1)$ $\alpha$ and $\beta$
circles that lie entirely in $S^2$; moreover, they must be the white
or black dots as shown in the third row of
Figure~\ref{fig:modified-diagrams}.

After numbering the $\alpha$ circles that lie entirely in $S^2$
arbitrarily (but consistently across Heegaard diagrams) from $1$ to
$n-1$, each generator $x$ can be represented as a pair
$(\vec{a},x^o)$, where $\vec{a}=(a_1,\dots,a_{n-1})\in\{0,1\}^{n-1}$
with $a_i=0$ if and only if $\alpha_i$ contains the white dot, and
$x^o$ denotes the  coordinates of $x$ that lie outside
$S^2$. Consider the usual partial order on $\{0,1\}^{n-1}$ with
$\vec{a}\leq\vec{b}$ if $a_i\leq b_i$ for all $i$, and the usual
$L^1$-norm on $\{0,1\}^{n-1}$ given by $|\vec{a}|=\sum_ia_i$. Since empty
positive domains are not allowed to pass through the $w$ markings, we
see that if $D\in\mc{D}_0((\vec{a},x^o),(\vec{b},y^o))$ is an empty
positive domain, then $\vec{b}\leq \vec{a}$. Let $\tCF^W_{\mc{H}''}$
denote the subcomplex of $\tCF_{\mc{H}''}$ spanned by generators with
$\vec{a}=0$, that is, generators with only white dots. Then we have
nested subcomplexes
\[
\tSF_{\mc{H}'',S'}\hookrightarrow\tCF^W_{\mc{H}''}\hookrightarrow\tCF_{\mc{H}''}.
\]
For any generator $x$ of $\tCF_{(\mc{H}_1*\mc{H}_2)''}$, let $x^i$ be
all its coordinates that lie in $\Si^i$. Then the map
\[
(\vec{0},x^1,x^2)\mapsto (\vec{0},x^1)\otimes(\vec{0},x^2)
\]
produces the following identification between the following chain groups:
\begin{equation}\label{eq:main-isomorphisms}
\vcenter{\hbox{\begin{tikzpicture}[xscale=4]
\node (sf-ab) at (0,0) {$\tSF_{(\mc{H}_1*\mc{H}_2)'',(S_1*S_2)'}$};
\node (sf-cb) at (0,-1.5) {$\tSF_{\mc{H}''_1,S'_1}\otimes\tSF_{\mc{H}''_2,S'_2}$};
\node (cf-ab) at (1,0) {$\tCF^W_{(\mc{H}_1*\mc{H}_2)''}$};
\node (cf-cb) at (1,-1.5) {$\tCF^W_{\mc{H}''_1}\otimes\tCF^W_{\mc{H}''_2}$};

\draw[right hook->](sf-ab) -- (cf-ab);
\draw[right hook->](sf-cb) -- (cf-cb);
\draw[->] (sf-ab) -- (sf-cb) node[midway,anchor=east] {$\cong$};
\draw[->] (cf-ab) -- (cf-cb) node[midway,anchor=west] {$\cong$};
\end{tikzpicture}}}
\end{equation}

Let us now prove that the vertical arrows are relative Maslov grading
preserving chain maps---that is, the above identifications are
identifications of chain complexes, up to a single absolute Maslov
grading shift. Let $(\vec{0},x^1,x^2)$ and $(\vec{0},y^1,y^2)$ be two
generators of $\tCF^W_{(\mc{H}_1*\mc{H}_2)''}$ and let
$D\in\mc{D}((\vec{0},x^1,x^2),(\vec{0},y^1,y^2))$ be an empty positive
domain in $(\mc{H}_1*\mc{H}_2)''$ connecting them. Such a domain has to be disjoint from $S^2$. 
Indeed, the fact  the domain has no corner points in $S^2$ forces it to restrict to a periodic domain therein, which the admissibility condition then ensures is the trivial domain with zero multiplicities. It follows that $D$ splits as a disjoint union $D^1\cup D^2$
of empty positive domains, with
$D^i\in\mc{D}((\vec{0},x^i),(\vec{0},y^i))$ in $\mc{H}_i''$. Recall from
Section~\ref{subsec:knotfloer} that $D$ can only contribute to the
differential if one of $D^1$ and $D^2$ is the trivial
domain. Furthermore, since such domains avoid $S^2$, we may choose
complex structures (and their perturbations) for the three Heegaard
diagrams so that they agree on $\Sigma^1$ and $\Sigma^2$ (and their
corresponding symmetric products); therefore, if $D^1$ (respectively,
$D^2$) is the trivial domain, then the contribution of $D$ will agree
with the contribution of $D^2$ (respectively, $D^1$). This is an instance of the {\em localization principle} \cite[Section 9.4]{JR}, and it establishes that
the vertical arrows are chain maps, and indeed chain isomorphisms.  To see that the vertical arrows
also respect the relative Maslov gradings, consider generators
$(0,x^i),(0,y^i)$ of $\tCF^W_{\mc{H}_i''}$, and choose empty domains
$D^i$ in $\mc{H}_i''$ (not necessarily positive) connecting them.  In
$\mc{H}_1''$ (respectively, $\mc{H}_2''$) consider the $(n-1)$
$\alpha$ (respectively, $\beta$) circles that lie entirely inside
$S^2$; each of them bounds a disk also entirely inside $S^2$, and each
such disk is comprised of two bigon-shaped elementary regions---one
containing a basepoint, and one without. By adding some number of
copies of these disks to the domain $D^i$, we can get a domain $E^i$
(not necessarily empty) connecting $(0,x^i)$ to $(0,y^i)$ in
$\mc{H}_i''$, which has coefficient zero in the bigon region that
does not contain the basepoint. Simply by adding the underlying
2-chains, these two domains $E^1$ and $E^2$ induce a domain $E$ in
$(\mc{H}_1*\mc{H}_2)''$ connecting $(0,x^1,y^1)$ to
$(0,x^2,y^2)$.  It follows from Lipshitz' Maslov index formula \cite{RL} that $\mu(E)=\mu(E^1)+\mu(E^2)$, and it is immediate that
$n_w(E)=n_w(E^1)+n_w(E^2)$. Consequently, the relative Maslov grading
is preserved. Therefore, in order to finish the proof, we only need to
calculate the absolute Maslov grading shift under the given isomorphism of relatively $\Z$-graded chain complexes. We will calculate this shift using the triangle maps associated to handleslides of the circles in $S^2$ over curves in the remainder of the diagram.

Towards this end, modify the Heegaard diagrams $\mc{H}''$ once more to
get the Heegaard diagrams $\mc{H}'''$ of the fourth row of
Figure~\ref{fig:modified-diagrams}. Namely, we slide the $\alpha$
circles inside $S^2$ off the attaching handles of $\Si^2$ (the ones
marked O in Figure~\ref{fig:modified-diagrams}) and we slide the
$\beta$ circles inside $S^2$ off the attaching handles of $\Si^1$ (the
ones marked X in Figure~\ref{fig:modified-diagrams}). There is an
obvious identification between $\alpha$ circles, $\beta$ circles, and
generators $(\vec{a},x^o)$ of ${\mc{H}''}$ and the corresponding
objects of ${\mc{H}'''}$; let $\bar\alpha$, $\bar\beta$, and $(\vec{a},\bar{x}^o)$ denote the corresponding objects in
${\mc{H}'''}$. Then each $\alpha$ circle only intersects the
corresponding $\bar\alpha$ circle, and does so at two points. So the
Heegaard diagram $(\Si,\al,\be,\bar\al)$ is of the type as described
in Section~\ref{sec:triangle-map}. The top generator $\theta$ has
coordinates just next the white dots of $\mc{H}''$ and $\mc{H}'''$,
and indeed, there is a small Maslov index zero triangular domain
connecting  $(\vec{0},x^o)$ and $(\vec{0},\bar{x}^o)$. Therefore,
the Maslov grading of $(\vec{0},x^o)$ is same as the Maslov grading of
$(\vec{0},\bar{x}^o)$. A similar argument, but with the roles of $\al$
and $\be$ reversed, proves that the Maslov grading is preserved under
the handleslides of the $\beta$-circles as well.

Now $\tCF_{\mc{H}'''}$ decomposes into $2^{n-1}$ direct summands, one
for each $\vec{a}\in\{0,1\}^{n-1}$. Moreover, the map
$(0,x^o)\mapsto(\vec{a},x^o)$ is an isomorphism between
$\tCF^W_{\mc{H}'''}[|\vec{a}|]$ and the summand corresponding to
$\vec{a}$.  Let $\mc{H}'''_d$ denote the Heegaard diagram destabilized
$n-1$ times, obtained from $\mc{H}'''$ by removing the $(n-1)$ $\alpha$
and $\beta$ circles that lie inside $S^2$, and the $(n-1)$ $w$-markings
enclosed by them. Then by Theorem~\ref{thm:main-invariance},
\begin{equation}\label{eq:3-vs-3d}
\tCF^W_{\mc{H}'''}[n-1]\cong\tCF_{\mc{H}'''_d}
\end{equation}
via the map $(\vec{0},x^o)\mapsto x^o$. 

Now the map $(x^1,x^2)\mapsto x^1\otimes x^2$ produces an
identification of chain groups following a similar but simpler
argument of Equation
\ref{eq:main-isomorphisms}:%\todo{there is a counting argument here that we're suppressing}
\begin{equation}\label{eq:final-graded-isom}
  \tCF_{(\mc{H}_1*\mc{H}_2)'''_d}\cong\tCF_{(\mc{H}_1)'''_d}\otimes\tCF_{(\mc{H}_2)'''_d}.
\end{equation}
Moreover, this map is a relative Maslov grading preserving chain map, using a similar (but easier) localization principle argument to that above. However,
$(\mc{H}_1)'''_d$, $(\mc{H}_2)'''_d$, and $(\mc{H}_1*\mc{H}_2)'''_d$
are Heegaard diagrams for $S^3$ with $(l_1+\delta_1-n+1)$,
$(l_2+\delta_2-n+1)$, and $(l+\delta-n+1)$ basepoints, respectively;
therefore, by Theorem~\ref{thm:main-invariance} (and since
$(l_1+\delta_1-n)+(l_2+\delta_2-n)=(l+\delta-n)$), either side of the
equation has homology $\otimes^{l+\delta-n}(\F_2\oplus\F_2[-1])$, so the chain isomorphism
~\eqref{eq:final-graded-isom} preserves absolute Maslov grading as well.

Combining this with Equation~\eqref{eq:3-vs-3d} and the previous fact 
that corresponding generators in $\mc{H}''$ and $\mc{H}'''$ have equal
Maslov gradings, we conclude that the isomorphism from
Equation~\eqref{eq:main-isomorphisms} shifts gradings by $n-1$. Then
with the aid of Equations~\eqref{eq:trivial-isomorphism}
and~\eqref{eq:technical-isomorphism}, we arrive at the desired graded isomorphism
Equation~\eqref{eq:graded-isom}.
\end{proof}

We turn now to Theorem \ref{thm2}, which states that $\tautop$ of a Murasugi sum is maximal if and only if $\tautop$ of each summand is maximal.

\begin{proof}[Proof of Theorem \ref{thm2}]

    By Proposition~\ref{prop:tau-std-prop}, we may take mirrors and
  prove the following statement for $\taubot$: If $L$ is a Murasugi
  sum of links $L_1$ and $L_2$ along minimal index Seifert surfaces,
  then $\taubot(L_i)=-g(L_i)$ for all $i\in\{1,2\}$ if and only if
  $\taubot(L)=-g(L)$.

  We will continue from the previous proof, and re-use the same
  notation. Thanks to Corollary~\ref{cor:2-step-filtered} and
  Proposition~\ref{prop:tau-std-prop}, we only need to prove that, for
  both $i=1,2$, the inclusion
  \[
    \tSF_{\mc{H}_i,S_i}\hookrightarrow\tCF_{\mc{H}_i}
  \]
  induces a non-zero map on homology if and only if the inclusion
  \[
    \tSF_{\mc{H}_1*\mc{H}_2,S_1*S_2}\hookrightarrow\tCF_{\mc{H}_1*\mc{H}_2}
  \]
  induces a non-zero map on homology.

  Thanks to Equations~\eqref{eq:trivial-isomorphism}
  and~\eqref{eq:technical-isomorphism}, it is enough to prove the
  above for $(\mc{H}'',S')$, that is,
  \begin{equation}\label{eq:implication-to-show}
    \forall i\big(H_*(\tSF_{\mc{H}''_i,S'_i})\to H_*(\tCF_{\mc{H}''_i})\text{ is non-zero}\big)  \Leftrightarrow H_*(\tSF_{\mc{H}''_1*\mc{H}''_2,S'_1*S'_2})\to H_*(\tCF_{\mc{H}''_1*\mc{H}''_2})\text{ is non-zero}.
  \end{equation}

  Recall that we have nested subcomplexes
  \[
    \tSF_{\mc{H}'',S'}\hookrightarrow\tCF^W_{\mc{H}''}\hookrightarrow\tCF_{\mc{H}''}.
  \]
  We claim the map $\tCF^W_{\mc{H}''}\hookrightarrow\tCF_{\mc{H}''}$
  is injective on homology for $\mc{H}''=\mc{H}''_1$, $\mc{H}''_2$, or
  $\mc{H}''_1*\mc{H}''_2$. For each of the three diagrams, the
  homology of $\tCF^W_{\mc{H}''}$ is isomorphic, up to a grading
  shift, to the homology of the $(n-1)$-times destabilized diagrams
  $\mc{H}'''_d$ (recall Equation \eqref{eq:3-vs-3d}); let $\omega$ denote its rank. By counting the
  number of basepoints, in each of the three cases, the homology of
  $\tCF_{\mc{H}''}$ has rank $2^{n-1}\omega$. As before, the
  generators of $\tCF_{\mc{H}''}$ can be represented as pairs
  $(\vec{a},x^o)$ where $\vec{a}\in\{0,1\}^{n-1}$, and the
  differential is filtered with respect to $\vec{a}$. The associated
  graded complex of this filtration has $2^{n-1}$ summands, each
  isomorphic to $\tCF^W_{\mc{H}''}$. By a spectral sequence argument,
  the homology of the quotient complex
  $\tCF_{\mc{H}''}/\tCF^W_{\mc{H}''}$ has rank at most
  $(2^{n-1}-1)\omega$. Therefore, in the exact triangle
  \[
    \begin{tikzpicture}[xscale=3,yscale=0.8]
      \node (sub) at (0,0) {$H_*(\tCF^W_{\mc{H}''})$};
      \node (main) at (1,1) {$H_*(\tCF_{\mc{H}''})$};
      \node (quot) at (2,0) {$H_*(\tCF_{\mc{H}''}/\tCF^W_{\mc{H}''})$};
      \draw[->] (sub) -- (main);
      \draw[->] (main) -- (quot);
      \draw[->] (quot) -- (sub);
    \end{tikzpicture}
  \]
  the three terms have ranks $\omega$, $2^{n-1}\omega$, and at most
  $(2^{n-1}-1)\omega$, which implies that the map
  $H_*(\tCF^W_{\mc{H}''})\to H_*(\tCF_{\mc{H}''})$ is injective.
  
  Thanks to this, instead of Equation~\eqref{eq:implication-to-show}, it is enough to prove
  \[
    \forall i\big(H_*(\tSF_{\mc{H}''_i,S'_i})\to H_*(\tCF^W_{\mc{H}''_i})\text{ is non-zero}\big)  \Leftrightarrow H_*(\tSF_{\mc{H}''_1*\mc{H}''_2,S'_1*S'_2})\to H_*(\tCF^W_{\mc{H}''_1*\mc{H}''_2})\text{ is non-zero},
  \]
  which follows from Equation~\eqref{eq:main-isomorphisms}.
  \end{proof}
  
  \begin{rem} We remark that one direction of the theorem (maximality of $\tautop$ of a Murasugi sum implies maximality for its summands) could be deduced from the fact that maximality of $\tautop$ is preserved under taking subsurfaces of a minimal index Seifert surfaces. The latter fact is a consequence of a bound satisfied by $\tautop$ for cobordisms between links, analagous to \cite[Theorem 1]{HeddenRaoux}.\end{rem}

\bibliographystyle{myalpha}

\bibliography{mybibfile}

\end{document}